\documentclass{article}
\usepackage[utf8]{inputenx}
\usepackage{tikz-cd}
\usepackage{indentfirst}
\usepackage{microtype}
\usepackage{amsfonts}
\usepackage{mathtools}
\usepackage[english]{babel}
\usepackage{euscript}
\usepackage{setspace}
\usepackage{amsthm}
\usepackage[bottom]{footmisc}
\usepackage{bbm}
\usepackage{xpatch}
\usepackage{pgfplots}
\pgfplotsset{compat = 1.15}
\usetikzlibrary{arrows.meta}
\usepackage{amsmath,url}
\usepackage{wrapfig}
\usepackage{dsfont} 
\usepackage{caption}
\usepackage{float}
\usepackage{bm}
\usepackage [a4paper, top=3.5cm, bottom=3.5cm, left=3.3cm, right=3.3cm] {geometry}
\usepackage{comment}
\usepackage{todonotes}
\usepackage{amssymb}
\usepackage{overpic}
\usepackage{mathrsfs} 
\usepackage{hyperref}
\usepackage{makeidx}

\usepackage{resmes}

\usepackage{graphicx}
\usepackage[export]{adjustbox}
\graphicspath{ {./images/} }

\usepackage{lettrine}
\theoremstyle{definition}
\newtheorem{definition}{Definition}[section]
\newtheorem{theorem}[definition]{Theorem}

\newtheorem{example}[definition]{Example} 
\newtheorem{rk}{Remark}[section]
\newtheorem{lemma}[definition]{Lemma}

\newtheorem{corollary}[definition]{Corollary}

\newtheorem{hyp}[definition]{Hypothesis}

\let\oldexample\example
\renewenvironment{example}
  {\oldexample\pushQED{\qed}\ignorespaces} 
  {\hfill$\blacklozenge$\par}                  

\usepackage{stackengine}

\newcommand{\jump}{\hfill\break}
\newcommand{\E}{\mathbb{E}}
\newcommand{\R}{\mathbb{R}}
\newcommand{\N}{\mathbb{N}}

\newcommand{\T}{\mathbb{T}}

\newcommand{\nN}{\textnormal{N}}
\newcommand{\e}{\textnormal{e}}

\renewcommand{\d}{\textnormal{d}}
\newcommand{\D}{\textnormal{D}}

\newcommand{\tb}{\vert\!\vert\!\vert}

\newcommand*\samethanks[1][\value{footnote}]{\footnotemark[#1]}
\title{Synchronization for the Rough Kuramoto Model}
\author{Alexandra Blessing Neam\c tu\thanks{University of Konstanz, Department of Mathematics and Statistics,  Universit\"atsstra\ss{}e~10 78464 Konstanz, Germany. E-Mail: alexandra.blessing@uni-konstanz.de, dennis.rudik@uni-konstanz.de }, Christian Kuehn\thanks{Technical University of Munich, School of Computation, Information and Technology, Department of Mathematics, Boltzmannstraße 3, 85748 Garching, Germany. E-Mail: ckuehn@ma.tum.de, giacomo.landi@tum.de}, Giacomo Landi\samethanks, Dennis Rudik\samethanks[1]}
\date{April 2, 2026}

\begin{document}

\maketitle
\begin{abstract}
We study the local synchronization of phases and frequencies for the Kuramoto model driven by rough noise. In particular, we prove exponential convergence towards synchronization and we give the explicit rate of convergence and quantify the size of the random basin of attraction. Furthermore, we show that the long time behavior of the system is determined by the evolution of phases' mean. Our result relies on the use of a Lyapunov function, capable of overriding the particular structure of the noise, taking in account only its intensity. Finally, we illustrate our analytical results and possible extensions with the help of numerical simulations.
\end{abstract}
\begin{section}{Introduction}
Synchronization is one of the most striking collective phenomena in nonlinear science: a population of interacting particles, despite starting from spread initial phases, may spontaneously evolve toward a coherent synchronized regime. This behavior appears across a wide range of disciplines, from physics \cite{wiesenfeld1996synchronization} and biology \cite{buck1988synchronous} to engineering \cite{rohden2012self} and neuroscience \cite{gray1994synchronous}, and has motivated extensive mathematical investigation  over the past decades. A paradigmatic model capturing this transition is the Kuramoto model, introduced in the seminal work of Kuramoto \cite{kuramoto2003chemical}. Since its introduction, the model has become a cornerstone of synchronization theory, with a vast literature devoted to its analytical and numerical study \cite{strogatz2000kuramoto,acebron2005kuramoto}.
In this paper we consider the Kuramoto model perturbed by a \emph{rough noise}, i.e., 
\begin{equation}\label{RKM}
    \d\theta_i(t)=\left[\varpi_i+\frac{K}{N}\sum_{j=1}^Na_{ij}\sin{(\theta_j(t)-\theta_i(t))}\right]\d t+(G(\theta(t))\d \textbf{W}_t)_i ,\qquad\forall i=1,...,N\tag{RKM}
\end{equation}
where $\theta_i\in\mathbb{S}^1$ are the phases of the oscillators, $K\in\R^+$ is the intensity of the coupling, $\varpi_i\in\R$ are the natural frequencies\footnote{Note that usually in the literature natural frequencies are denoted with $\omega_i$. Here we prefer to use $\varpi$ since $\omega$ is already used for the stochastic realizations.} that characterize each oscillator, the $a_{ij}$ are the entries of the adjacency matrix $\mathcal{A}$, $G$ is the noise function governing our rough noise $\textbf{W}$ (to be specified later), and $N$ denotes the number of oscillators.\jump\jump
Stability analysis for the Kuramoto model has always been an important topic. For the deterministic case there is a lot of literature regarding stability. One of the first papers using Lyapunov functions in order to state the stability of the synchronous state is \cite{van1993lyapunov}. Furthermore,  \cite{ha2010complete} establish the complete synchronization of the oscillators when they are contained in half a circle. This result is extended and modified for initial data on the entire circle in \cite{dong2013synchronization}. The same result has been proven in \cite{benedetto2014complete} using other techniques. Moreover, the number of possible stable solutions of the Kuramoto model has been investigated for identical oscillators in \cite{taylor2012there,ling2019landscape,taylor2012there} and for non-identical oscillators in \cite{arenas2023number}. For general positive symmetric weights synchronization is studied in \cite{jadbabaie2004stability}, while for the Kuramoto model on oriented and signed graphs we refer to \cite{delabays2019kuramoto,burylko2014bifurcation,yoon2026stability,hong2011kuramoto}. Instead of finitely many particles, another common direction is to study the stability of invariant sets in the Kuramoto model via the mean-field limit, see e.g.,~\cite{chiba2015proof,benedetto2014complete,benedetto2016exponential,dietert2018mathematics}. However, in realistic applications there are finitely many oscillators. In addition, in this context the oscillators are usually not only subject to smooth deterministic coupling forces but also to irregular perturbations, which may arise from highly oscillatory environments or intrinsic stochastic fluctuations, leading to the stochastic Kuramoto model~\cite{acebron2005kuramoto,strogatz1991stability}. These considerations led to a surge recent activity in trying to generalize deterministic synchronization results to the case of stochastic perturbations focusing on white noise perturbations; see e.g.~\cite{wu2020global,liu2022stochastic}.
To make models even more realistic and flexible, one can broaden the class of noise forcing processes. Examples are systems perturbed by a rough noise, such as fractional Brownian motion and other non-semimartingale processes. These noise processes can be treated in a unified way using rough path theory. Since its development, rough path theory has become a fundamental tool in modern stochastic analysis \cite{friz2014course,lyons1998differential}, with growing influence in nonlinear dynamics and interacting particle systems.\jump\jump
Our contribution is to extend Kuramoto model synchronization results to the case where the perturbation is given by rough noise. To the best of the authors’ knowledge, this is the first work that rigorously proves the robustness of synchronization in the Kuramoto model under rough perturbations. There are formal arguments regarding white noise~\cite{wu2020global,liu2022stochastic}, which one may upon modification make rigorous. Here we will focus on the more general case of rough noises, also improving significantly upon previous results and mathematical details. In particular, we prove the synchronization of identical oscillators on connected graphs using a suitable Lyapunov function. This is natural in the context of rough differential equations, see \cite{duc2025strong}, where a stronger notion of a Lyapunov function than in~\cite{khasminskii2012stochastic} was used in order to prove local exponential stability of equilibria for rough systems.~For further results on local exponential stability for such systems we refer to~\cite{stabilityyoung,robert}. 
In our situation, whenever the graph is not connected we are able to prove the synchronization on every connected subgraph, even if from our numerical simulations it seems plausible that one can obtain a unique synchronous state if the particles are still interacting via the noise. For non-identical oscillators we are only able to prove the synchronization of the frequencies, which is consistent with the existing literature~\cite{chopra2005synchronization,chopra2009exponential,jadbabaie2004stability}. We also show  the remarkable result  that if there are enough algebraic constrains on the noise, then the rough dynamics is confined into a deterministic hyperplane.\jump\jump
The paper is organized as follows. In Section \ref{Sec_main} we will state all the main results of the paper. In particular, in Subsection \ref{subsec_syn}, we introduce the main hypotheses under which we establish local synchronization of the phases for \eqref{RKM}. To achieve this, we present in Subsection \ref{Doss-Suss} the Doss–Sussmann transformation, a standard tool for proving existence of solutions to rough differential equations (RDEs). Finally, in Subsection \ref{subsec_synproof}, we prove the local synchronization for the \eqref{RKM}.
In Subsection \ref{subsec_split} we prove for balanced signed graphs that the system synchronizes into two different clusters. In Subsection \ref{syn_fre} we suggest an RDE for the frequencies $\varpi_i$ and there we prove the synchronization of the oscillators' frequencies. Furthermore, in Subsection \ref{subsec_det} we show how, up to assuming a stronger symmetry hypothesis on $G$, one obtains that the whole rough dynamics is contained in a deterministic plane. In the last Subsection \ref{Syn_rot} we study the synchronization of particles' phases, after changing set of hypotheses, towards a random variable that does not admit a limit in the long time regime. In particular there we really show that the long time behavior of the system is determined by the mean of the noise functions. We finish with Section \ref{Num_sec}, where we cross-validate our theoretical results and we discuss further developments that could be obtained with a refined theoretical set of tools.
\end{section}
\begin{section}{Main Results}\label{Sec_main}
Before proceeding with the main part of the paper we would like to recall notation related to the adjacency matrix. The adjacency matrix encodes all the interactions among the $N$ particles, representing the information of the underlying graph $\mathcal{G}=(V,E)$ where $V=\{v_1,v_2,\ldots,v_N\}$ is the set of vertices representing the $N$ particles, $E=\{(v_i,v_j)\vert v_i,v_j\in V\}$ is the edge set of the graph $\mathcal{G}$ and $\mathcal{N}=\{1,\ldots,N\}$. We have that $a_{ij}>0$ if there is a link from $v_i$ to to $v_j$ and the particles are "agreeing", $a_{ij}<0$ if there is a link but the particles do not agree and $a_{ij}=0$ if they are not interacting.
\begin{subsection}{Setting for Local Synchronization of the Phases}\label{subsec_syn}
In this Subsection, we state the assumptions and derive a reformulation of \eqref{RKM} that will be used to establish local synchronization of the phases in Subsection \ref{subsec_synproof}. For our purpose we start by assuming a particular set of hypotheses for our model. Later on we will show how we can remove certain restrictions and adapt our results accordingly. We have decided to start in this way because under this set of hypotheses we will still see the essential mechanism of the proof, that will lead to the anticipated generalization in a straightforward way. Therefore, for the moment, we assume the following hypotheses:
\begin{hyp}\hfill
\begin{description}
\item[(\textbf{A})\label{A}] We suppose that $\mathcal{A}$ is symmetric with entries $a_{ij}\geq0$ and that $\mathcal{A}$ represents a connected graph, so that for every $i,j=1,\ldots,N$ exists a sequence of indices $(i_l)_{1}^n$ with $i_1=i$ and $i_n=j$ such that $a_{i_l,i_{l+1}}>0$ for every $l=1,\ldots n-1$.

\item[(\textbf{B})\label{B}]  We suppose that all the natural frequencies are identical, i.e.~$\varpi_i=\underline{\varpi}\in\R$. 

\item[(\textbf{H}\textsubscript{\textbf{W}})\label{W}] For a given $\gamma\in(\frac{1}{3},\frac{1}{2}]$, $\mathcal{W}_t(\omega)\in\R^m$ is a stochastic process with stationary increments, of which almost all realizations $W$ belong to the space $C^\gamma(\R,\R^m)$ of $\gamma$-Hölder continuous paths, such that $\mathcal{W}$ can be lifted into a rough path $\mathbf{W}=(W,\mathbb{W})$ of a stochastic process $(\mathcal{W}_{\cdot}(\omega),\mathbb{W}_{\cdot,\cdot}(\omega))$ with stationary increments, and the estimate
        \begin{equation}\label{kolmo}
            \E\left(\|\mathcal{W}_{s,t}\|^p+\|\mathbb{W}_{s,t}\|^q\right)\leq C_{T,\gamma}|t-s|^{p\gamma},\qquad\forall s,t\in[0,T]
        \end{equation}
        holds for any $[0,T]$, for a fixed $p$ such that $p\gamma\geq1,q=\frac{p}{2}$ and some constant $C_{T,\gamma}$. Moreover we suppose that $\Xi$, the Wiener-shift defined on the probability space $\Omega$ is ergodic. Both $\Xi$ and $\Omega$ are precisely defined in Appendix \ref{App_A}. We refer to Appendix \ref{App_A} for the precise definitions of the norms and related objects.

\item[(\textbf{H}\textsubscript{\textbf{G}})\label{G}]  $G$ is in $C^3_b(\T^N,\mathcal{L}(\R^m,\mathbb{T}^N))$ where $\mathcal{L}(\R^m,\mathbb{T}^N)$ denotes the space of linear operators from $\R^m$ into the $N$-dimensional torus $\mathbb{T}^N$. We also assume that $G(0)=0$ and that $G$ is rotational invariant, i.e.,
        \begin{equation}\label{rot_inv}
            G(\theta+a)=G(\theta)\qquad\forall\theta\in\T^N,\forall a\in\R.
        \end{equation}
        In the following we will denote by $G_{ij}(\theta)$ the components of $G$ for $i=1,\ldots,N$ and $j=1,\ldots,m$. Moreover, we define $C_G$ as
        $$C_G\coloneqq \max\bigg\{\;\|G\|_\infty,\;\|\D G\|_\infty,\;\|\D^2G\|_\infty,\;\|\D^3G\|_\infty\bigg\}.$$
        Note that the derivatives of $G$ are defined as follows
        \begin{align*}
            &\D G:\mathbb{T}^N\longrightarrow\mathcal{L}(\T^N,\mathcal{L}(\R^m,\T^N))\simeq\mathcal{L}(\T^N\times\R^m,\T^N),\\
            &\D^2G:\mathbb{T}^N\longrightarrow\mathcal{L}(\T^N,\mathcal{L}(\T^N,\mathcal{L}(\R^m,\T^N)))\simeq\mathcal{L}(\T^N\times\T^N\times\R^m,\T^N),\\
            &\D^3G:\mathbb{T}^N\longrightarrow\mathcal{L}(\T^N,\mathcal{L}(\T^N,\mathcal{L}(\T^N,\mathcal{L}(\R^m,\T^N))))\simeq\mathcal{L}(\T^N\times\T^N\times\T^N\times\R^m,\T^N).
        \end{align*}
\end{description}
\end{hyp}
\begin{rk}\label{rk1}
    An example of stochastic \textcolor{black}{process} that satisfy \nameref{W} is fractional Brownian motion with Hurst index $H>1/3$, see \cite[Theorem 10.4, Ex. 10.11]{friz2014course} for $H>1/3$ and \cite[Chapter 15]{friz2010multidimensional} for $H\in(1/4,1/3)$. For fractional Brownian motion, it is well-known that $\Xi$ is ergodic, see e.g. \cite[Theorem 20]{GS}. 
    Moreover, due to \nameref{G} we can perform a change of coordinates $\theta\longmapsto\theta+t\underline{\varpi}$ so that the system is reduced to
    $$\d\theta_i(t)=\left[\frac{K}{N}\sum_{j=1}^Na_{ij}\sin{(\theta_j(t)-\theta_i(t))}\right]\d t+(G(\theta(t))\d \textbf{W}_t)_i ,\qquad\forall i=1,...,N.$$ Furthermore, we have that the mean phase of the system $\Theta\coloneqq\frac{1}{N}\sum_{i=1}^N\theta_i$ is governed by
    \begin{align}\label{mean_phase}
    \begin{split}
        \d\Theta(t)=\frac{1}{N}\sum_{i=1}^N\d\theta_i(t)&=\frac{1}{N}\sum_{i=1}^N\frac{K}{N}\sum_{j=1}^Na_{ij}\sin{(\theta_j(t)-\theta_i(t))}\d t+\frac{1}{N}\sum_{i=1}^N(G(\theta(t))\d \textbf{W}_t)_i=\\
        &=0+\frac{1}{N}\sum_{i=1}^N\sum_{j=1}^m G_{ij}(\theta(t))\d\textbf{W}^j_t.
    \end{split}
    \end{align}
    where the first term vanishes due to the symmetry of $\mathcal{A}$ and the oddness of $\sin(\cdot)$.
    A natural class of noise functions that satisfy \nameref{G} is 
    \begin{equation}\label{noise}
        G_{ij}(\theta)=\sigma\sum_{k=1}^Nb_{ik}\sin(\theta_i-\theta_k)^{\tilde{n}} \qquad\forall i=1,\ldots,N,\;\forall j=1,\ldots,m\;;\sigma\in\R^+;\text{ for some $\tilde{n}\in\N^+$}
    \end{equation}
    where $\sigma$ is a scalar that control the intensity of the noise and $(\mathcal{B})_{ij}=b_{ij}$ are the entries of the adjacency matrix of the noise, with $\mathcal{B}$ symmetric once again.
    \end{rk}

\begin{rk}\label{example_matrix}
An important example of an adjacency matrix $\mathcal{A}$ satisfying \nameref{A} is the case of all-to-all coupling, so when $a_{ij}=1$ for all $i,j=1,\ldots,N$. Other important examples are the $k$-neighbor adjacency matrices and the classical undirected adjacency matrices where $a_{ij}=1$ if the particle $i$ and $j$ interact between each other and $a_{ij}=0$ otherwise.
\end{rk}
\begin{rk}
Since the drift term belongs to $C_b^\infty$, and $G\in C^3_b$, one can show that \eqref{RKM} admits global solutions defined for all times in the interval $[0,+\infty)$. See for instance, \cite{bailleul2020non,davie2008differential,RS:17} for non-explosion criteria ensuring global existence of solutions, or \cite{lyons1998differential} for more classical global existence results for rough differential equations.
\end{rk}
Now we perform a change of variables
\begin{equation}\label{change_coord}
    \hat{\theta}(t)\coloneqq\theta(t)-\Theta(t)=(\theta_1-\Theta,\cdots,\theta_N-\Theta)(t)
\end{equation}
so that our system reads
\begin{equation}\label{reduced RKM}
\begin{cases}
\displaystyle
    \d\hat{\theta}_i(t)=\left[\frac{K}{N}\sum_{j=1}^Na_{ij}\sin{(\hat{\theta}_j(t)-\hat{\theta}_i(t))}\right]\d t+\sum_{j=1}^m \Tilde{G}_{ij}(\hat{\theta}(t))\d \textbf{W}^j_t ,\qquad\forall i=1,...,N\\
    \displaystyle
    \sum_{i=1}^N\hat{\theta}_i=0
\end{cases}
\end{equation}
where we have defined
$$\Tilde{G}_{ij}(\theta)\coloneqq G_{ij}(\theta)-\frac{1}{N}\sum_{k=1}^N G_{kj}(\theta).$$
In particular now our system is reduced to an $N-1$ dimensional system and our modified noise function $\Tilde{G}$ satisfies
$$\sum_{i=1}^N\Tilde{G}_{ij}(\theta)=0\qquad\forall\; j=1,\ldots,m.$$
 In the following we will treat $\mathbb{S}^1$ as the closed interval $[-\pi,\pi]$. At first sight, this restriction might seem an artifact. Actually if some of the $\hat{\theta}_i$ hit the boundary of the interval and jump, e.g., from $-\pi$ to $\pi$, we have that the argument of our functions does not change. But no jump occurs in our setting, indeed we will study and prove the asymptotic stability of the equilibrium $\hat{\theta}^\star=0$ in a suitable neighborhood $I_\delta=[-\delta,\delta]$ with $\delta\in[0,\pi/2\textcolor{black}{)}$.
\end{subsection}
\begin{subsection}{The Doss-Sussmann transformation}\label{Doss-Suss}
In order to investigate the local exponential stability of \eqref{RKM} we construct a Lyapunov function. For alternative approaches for rough (partial) differential equations that use cut-off arguments to modify the coefficients and discrete Gronwall inequalities, we refer to \cite{stabilityyoung,robert}. Note that these approaches assume that $G(0)=\D G(0)=0$ (and similarly for the drift) which is not necessary for the Lyapunov-type argument.

 The strategy that we follow is based on a Doss-Sussmann transformation / flow transformation which is briefly described below. This transformation has been frequently used in order to transform an SDE with additive or linear multiplicative noise into an ODE with random non-autonomous coefficients.
 In our case, we apply it to a rough differential equation to remove its drift term, similar to~\cite[Theorem 4.3]{RS:17}
 ~In particular, our aim is to derive a Lyapunov function for a rough differential equation of the form
\begin{equation}\label{RDE}
    \d y_t=f(y_t)\d t+g(y_t)\d \textbf{W}_t,
\end{equation}
under the standard assumptions on the coefficients $f \in C^1_b$ and $g\in C^3_b$ ensuring its well-posedness. In order to do so one considers the solution $\phi_{\cdot,a}(\mathbf{W},\phi_a)$ of the \textit{pure} rough differential equation 
\begin{equation}\label{pure RDE}
    \d\phi_t=g(\phi_t)\d \textbf{W}_t\qquad t\in[a,b],\;\phi_a\in\R^d
\end{equation}
with initial value $\phi_a$. Such a solution is $C^1$ with respect to the initial datum, which results from \cite[Theorem 3]{CL}. The idea is to solve equation \eqref{RDE} on small intervals $[\tau_k,\tau_{k+1}]$ such that 
$$\lambda\coloneqq16C_pC_g\tb\textbf{W}\tb_{p-\text{var},[\tau_k,\tau_{k+1}]}\leq1$$
where $C_p$ is a constant coming from the sewing lemma \cite[Theorem 4.10]{friz2014course} and $C_g$ is defined analogously to $C_G$. Then one concatenates all the intervals to obtain the conclusion on the whole interval $[0,T]$. The Doss-Sussmann technique guarantees that there exists a path-dependent transformation $y_t=\phi_{t,\tau_k}(\mathbf{W},z_t)$  that is a bijection between a solution of \eqref{RDE} on a certain interval $[\tau_k,\tau_{k+1}]$ and a solution of the associated ordinary differential equation
\begin{equation}\label{Associate ODE}
\begin{cases}
\displaystyle
    \dot{z}_t=\left[\frac{\partial\phi}{\partial z}(t,\tau_k,\mathbf{W},z_t)\right]^{-1}f(\phi_{t,\tau_k}(\mathbf{W},z_t))=(\textnormal{Id}+\psi_t)f(z_t+\eta_t)\qquad t\in[\tau_k,\tau_{k+1}]\\
    \displaystyle z_{\tau_k}=y_{\tau_k}
\end{cases}
\end{equation}
where we set 
$$\eta_t\coloneqq y_t-z_t=\phi_{t,\tau_k}-z_t,$$
$$\phi_t\coloneqq\left[\frac{\partial\phi}{\partial z}(t,\tau_k,\mathbf{W},z_t)\right]^{-1}-\textnormal{Id}.$$
It can be shown that, for every $\lambda\in(0,1)$ the intervals $[\tau_k,\tau_{k+1}]$ can be chosen such that
$$\|\eta_t\|,\|\phi_t\|\leq\lambda,\qquad\forall t\in[\tau_k,\tau_{k+1}].$$
In particular the choice of these so-called greedy times  $\{\tau_k\}_{k\in\N}$ first introduced in \cite[Section 4]{CLL} is given by
$$\tau_0=0,\qquad\tau_{k+1}\coloneqq\inf\left\{t>\tau_k\;\bigg\vert\;\tb\textbf{W}\tb_{p-\text{var},[\tau_k,t]}=\frac{\lambda}{16C_pC_g}\right\}\wedge T,$$
\textcolor{black}{
where $\wedge$ denotes the minimum of the corresponding times. Throughout the paper, we reserve the more classical notation $\min$ for the minimum among elements.}
The function $\textcolor{black}{\overline{\nN}(\cdot)}$ below counts the number of the greedy times up to time $T$, i.e.,
$$\textcolor{black}{\overline{\nN}}\left(\frac{\lambda}{16C_pC_g},\textbf{W},[0,T]\right)\coloneqq\sup\{k\in\N\;\vert\;\tau_k\leq T\}.$$
Whenever we will use a different $\tilde{p}$-variation norm to define the greedy times, and consequently $\textcolor{black}{\overline{\nN}}(\cdot)$, we will denote the resulting function by $\textcolor{black}{\overline{\nN}}_{\tilde{p}}(\cdot)$. According to \cite[Lemma 3.2]{duc2024stability} for every $a,b\in\R,a\leq b$ and for every $\gamma>0$ the following inequalities hold
\begin{equation}\label{N_up}
    \textcolor{black}{\overline{\nN}}(\gamma,\textbf{W},[a,b])\leq1+\frac{1}{\gamma^p}\tb\textbf{W}\tb^p_{p-\text{var},[a,b]},
\end{equation}
\begin{equation}\label{N_down}
    \textcolor{black}{\overline{\nN}}(\gamma,\textbf{W},[a,b])\geq\frac{1}{\gamma}\tb\textbf{W}\tb_{p-\text{var},[a,b]}.
\end{equation}
We now state an auxiliary result which will be used later on.
\begin{lemma}\label{lemma1}
    Consider the rough Kuramoto model \eqref{RKM} satisfying the hypotheses \nameref{A}, \nameref{B}, \nameref{W}, \nameref{G},
     then the Doss-Sussmann transformation preserves the zero mean subspace, i.e., if we have a  solution $\hat{\theta}(t)$ of \eqref{reduced RKM} then its Doss-Sussmann associated solution satisfies
    $$\sum_{i=1}^N z_i(t)=0\qquad \text{for all } t \geq 0.$$
\end{lemma}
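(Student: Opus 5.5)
The plan is to show that the zero-mean hyperplane $H\coloneqq\{x\in\R^N:\sum_i x_i=0\}$ is invariant under each ingredient of the Doss--Sussmann construction: the pure rough flow, its inverse, and its Jacobian. We then propagate this along the greedy times $\{\tau_k\}$ by induction on $k$. Write $\mathbf{1}=(1,\dots,1)$, so that $H=\mathbf{1}^\perp$.

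\textbf{Step 1: the pure flow conserves the mean.} Let $\phi_{t,\tau_k}(\mathbf{W},x)$ solve the pure RDE $\d\phi_t=\Tilde G(\phi_t)\d\mathbf{W}_t$ with $\phi_{\tau_k}=x$. Since $\sum_i\Tilde G_{ij}(\theta)=0$ for every $j$ and every $\theta$, the scalar functional $\ell(x)=\langle\mathbf{1},x\rangle$ is linear. Applying it to the rough integral (equivalently, using the rough chain rule with $\D\ell=\mathbf{1}^\top$ and $\D^2\ell=0$) gives
\[
\d\langle\mathbf{1},\phi_t\rangle=\sum_{j}\Big(\sum_i\Tilde G_{ij}(\phi_t)\Big)\d\mathbf{W}^j_t=0 .
\]
The Gubinelli derivative $\D\Tilde G\,\Tilde G$ also has zero column sums, because $\mathbf{1}^\top\Tilde G\equiv0$ implies $\mathbf{1}^\top\D\Tilde G\equiv0$. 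Hence the second-order term vanishes as well, and $\langle\mathbf{1},\phi_{t,\tau_k}(x)\rangle=\langle\mathbf{1},x\rangle$ for all $t\in[\tau_k,\tau_{k+1}]$. Differentiating in $x$ (the flow is $C^1$ by \cite[Theorem 3]{CL}) yields
\[
\mathbf{1}^\top\frac{\partial\phi}{\partial z}(t,\tau_k,\mathbf{W},z)=\mathbf{1}^\top .
\]
So the Jacobian $J$ maps $H$ into $H$, and since it is invertible on the greedy intervals (where $\|J^{-1}-\textnormal{Id}\|\le\lambda<1$), we also get $\mathbf{1}^\top J^{-1}=\mathbf{1}^\top$.

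\textbf{Step 2: the transformed drift lies in $H$.} In the associated ODE \eqref{Associate ODE} we have $\dot z_t=J^{-1}f(\phi_{t,\tau_k}(z_t))$, where $f_i(y)=\frac KN\sum_j a_{ij}\sin(y_j-y_i)$. As in \eqref{mean_phase}, the symmetry of $\mathcal A$ and the oddness of $\sin$ give $\mathbf{1}^\top f(y)=0$ for every $y$, not only for $y\in H$. Therefore
\[
\frac{\d}{\d t}\langle\mathbf{1},z_t\rangle=\mathbf{1}^\top J^{-1}f=\mathbf{1}^\top f=0 ,
\]
so $\langle\mathbf{1},z_t\rangle$ is constant on $[\tau_k,\tau_{k+1}]$.

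\textbf{Step 3: induction over the greedy intervals.} On each interval the initial datum is $z_{\tau_k}=\hat\theta(\tau_k)$, and $\hat\theta(\tau_k)\in H$ by \eqref{reduced RKM}. Steps 1 and 2 then give $\sum_iz_i(t)=0$ on $[\tau_k,\tau_{k+1}]$. Because the greedy times cover $[0,T]$ with finitely many intervals by \eqref{N_up}, and $T$ is arbitrary, the claim holds for all $t\ge0$. As a consistency check, $\hat\theta_t=\phi_{t,\tau_k}(z_t)$ together with Step 1 reproduces $\sum_i\hat\theta_i(t)=0$.

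\textbf{Main obstacle.} The delicate point is making Step 1 rigorous at the rough-path level, namely checking that the linear functional annihilates the full compensated Riemann sums of the controlled integral and not just the first-order term. I would first argue directly through the sewing lemma: both $\Tilde G(\phi_s)W_{s,t}$ and $\D\Tilde G(\phi_s)\Tilde G(\phi_s)\mathbb W_{s,t}$ are annihilated by $\mathbf{1}^\top$, so every approximating sum, and hence the limit, is annihilated. If that proves awkward, I would fall back on approximating $\mathbf{W}$ by smooth rough paths. For smooth drivers the invariance follows from the ODE computation, and continuity of the Itô--Lyons map (and of its derivative in the initial datum) then passes it to the limit.
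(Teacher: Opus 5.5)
Your proof is correct and is essentially the paper's alternative argument in Appendix~B. There too one shows $\mathbf{1}^\top \frac{\partial\phi}{\partial z}=\mathbf{1}^\top$ (the paper via the variational RDE for the Jacobian, you by differentiating the conserved quantity $\langle\mathbf{1},\phi_{t,\tau_k}(x)\rangle=\langle\mathbf{1},x\rangle$), deduces $\mathbf{1}^\top\big[\frac{\partial\phi}{\partial z}\big]^{-1}=\mathbf{1}^\top$, and concludes from $\mathbf{1}^\top f\equiv 0$ and $z_{\tau_k}=\hat\theta(\tau_k)$; also, your ``consistency check'' read in the other direction, $\langle\mathbf{1},z_t\rangle=\langle\mathbf{1},\phi_{t,\tau_k}(z_t)\rangle=\langle\mathbf{1},\hat\theta_t\rangle=0$, is exactly the paper's main-text proof and already suffices on its own, so Steps~2--3 are not strictly needed.
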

\begin{proof}
    In order to prove this Lemma we just simply notice that $\eta_t$ can be rewritten as
    \begin{equation}\label{int_res}
         \eta_t=\phi_{t,\tau_k}(\mathbf{W},z_t)-z_t=\int_{\tau_k}^t \tilde{G}(\phi_{s,\tau_k}(\mathbf{W},z_t))\d \textbf{W}_s,\qquad t\in[\tau_k,\tau_{k+1}]
    \end{equation}
    so that
    $$\sum_{i=1}^N z_i(t)=\sum_{i=1}^N y_i(t)-\sum_{i=1}^N\eta_i(t)=0-\int_{\tau_k}^t \sum_{i=1}^N    \sum_{j=1}^m\tilde{G}_{ij}(\phi_{s,\tau_k}(\mathbf{W},z_t))\d \textbf{W}^m_s=0,$$
    completing the proof. An alternative proof is proposed in Appendix~\ref{App_B}.
\end{proof}
\end{subsection}
\begin{subsection}{Local Synchronization of the Phases}\label{subsec_synproof}
We are now ready to prove the local synchronization for the phases of \eqref{RKM}. In order to do so we start by giving the definition of Lyapunov (exponential) stability for a rough differential equation.
\begin{definition}\label{defStab}\textit{\textbf{Stability}}: a solution $y^\star$ of \eqref{RDE} is called (Lyapunov) stable if for any $\varepsilon>0$ there exists a positive random variable $r=r(\omega)>0$ such that for any initial value $y_0\in\R^d$ satisfying $\|y_0-y^\star_0\|<r(\omega)$ the solution of \eqref{RDE} starting from $y_0$ exists on the whole half line $t\in[0,\infty)$ and the following inequality holds
$$\sup_{t\geq0}\|y_t-y^\star_t\|<\varepsilon.$$\\
\textit{\textbf{Exponential stability}}:  a solution $y^\star$ of \eqref{RDE} is called exponentially stable if it is stable and there exist two positive random variables $r(\omega)>0$ and $r^\star(\omega)>0$ and a positive constant $\mu>0$ such that for any initial value $y_0$ satisfying $\|y_0-y^\star_0\|<r(\omega)$ the solution of \eqref{RDE} starting from $y_0$ exists on the whole half line $t\in[0,\infty)$ and the following inequality
$$\|y_t-y_t^\star\|\leq r^\star(\omega)e^{-\mu t}$$
holds for all $t\geq0$.
\end{definition}
So our goal is to prove the following theorem.
\begin{theorem}\label{main_th}
    Under the hypotheses \nameref{A}, \nameref{B}, \nameref{W}, \nameref{G} then there exists a constant $\lambda_0$ such that for any $C_{\textcolor{black}{\Tilde{G}}}<\lambda_0$ we have that $\hat{\theta}^\star=0$ is almost surely exponentially \textcolor{black}{stable} in the sense of Definition \ref{defStab} on $I_\delta$, with $\delta\in[0,\pi/2)$. Moreover, always referring to Definition~\ref{defStab}, we have that the rate of convergence $\mu$ \textcolor{black}{can be chosen arbitrarily in the interval}
    \begin{equation}\label{rate_bound}
        0<\mu<\frac{K}{N}C_\delta\lambda_2-\textcolor{black}{L_f}(2+C_{\textcolor{black}{\Tilde{G}}})C_{\textcolor{black}{\Tilde{G}}}-C_{\textcolor{black}{\Tilde{G}}}\E \textcolor{black}{\overline{\nN}}\left(\frac{1}{16C_p},\textbf{W},[0,1]\right)
    \end{equation}
    with $\lambda_2$ the smallest positive eigenvalue of the Laplacian matrix of $\mathcal{A}$, \textcolor{black}{$C_\delta\in\left(0,1\right]$} is a constant depending only on $\delta$ \textcolor{black}{and $L_f$ is the Lipschitz constant of the deterministic drift of \eqref{RKM}}.
    Moreover $r(\omega)$ and $r^\star(\omega)$ have an explicit expression, which depends on the choice of $\mu$.
\end{theorem}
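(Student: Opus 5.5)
We will prove the statement with the quadratic Lyapunov function $V(z)=\|z\|^2=\sum_i z_i^2$, applied not to $\hat\theta$ itself but to its Doss--Sussmann counterpart $z_t$ from Subsection~\ref{Doss-Suss}. On each greedy interval $[\tau_k,\tau_{k+1}]$, with $\lambda$ of order $C_{\Tilde G}$, we have $\hat\theta_t=z_t+\eta_t$ with $\|\eta_t\|,\|\psi_t\|\le\lambda$. By Lemma~\ref{lemma1}, $z_t$ stays in the zero-mean subspace, so the spectral gap of the Laplacian is available there. We use equation \eqref{Associate ODE} with the drift
\[
f_i(\theta)=\frac{K}{N}\sum_j a_{ij}\sin(\theta_j-\theta_i).
\]

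\textbf{Step 1: the unperturbed dissipation.} For $z$ in the zero-mean subspace with $|z_i|\le\delta<\pi/2$, symmetry of $\mathcal A$ gives
\[
\langle z,f(z)\rangle=-\frac{K}{2N}\sum_{i,j}a_{ij}(z_j-z_i)\sin(z_j-z_i).
\]
Since $|z_j-z_i|\le 2\delta$, we have $x\sin x\ge C_\delta x^2$ with $C_\delta=\sin(2\delta)/(2\delta)$, adjusted if needed so that the relevant range stays below $\pi$; this is where $\delta<\pi/2$ enters. Hence
\[
\langle z,f(z)\rangle\le -\frac{K}{N}C_\delta\, z^\top L z\le -\frac{K}{N}C_\delta\lambda_2\|z\|^2,
\]
where $L$ is the Laplacian. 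Connectedness (\nameref{A}) gives $\lambda_2>0$ on $\mathbf 1^\perp$.

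\textbf{Step 2: the perturbation.} Write
\[
\tfrac12\tfrac{d}{dt}\|z\|^2=\langle z,f(z)\rangle+\langle z,f(z+\eta)-f(z)\rangle+\langle z,\psi_t f(z+\eta)\rangle .
\]
Since $f(0)=0$ and $f$ is $L_f$-Lipschitz, the last two terms are bounded by
\[
L_f\|z\|\|\eta\|+\lambda L_f\|z\|(\|z\|+\|\eta\|).
\]
The difficulty is that $\eta$ is not small relative to $z$ a priori, only bounded by $\lambda$. To handle this we use the finer estimate $\|\eta_t\|\lesssim\lambda\|z_t\|$. It follows from \eqref{int_res} because $\Tilde G$ vanishes at the synchronous state: $G(0)=0$ and rotational invariance imply $\Tilde G(c\mathbf 1)=0$, so $\|\Tilde G(\phi)\|\le C_{\Tilde G}\|\phi\|$. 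Combined with the rough-integral bounds on a greedy interval, this gives $\|\eta_t\|\le \lambda\|z_t\|$ and hence $\|\hat\theta_t\|\le(1+\lambda)\|z_t\|$. Altogether we obtain
\[
\frac{d}{dt}\|z\|^2\le -2\Big(\frac{K}{N}C_\delta\lambda_2-L_f(2+C_{\Tilde G})C_{\Tilde G}\Big)\|z\|^2 ,
\]
which produces the second term in \eqref{rate_bound}.

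\textbf{Step 3: concatenation and ergodicity.} At each greedy time we have $z_{\tau_k}=\hat\theta_{\tau_k}$. Passing from $z_{\tau_{k+1}^-}$ to $\hat\theta_{\tau_{k+1}}$ costs a factor $(1+\lambda)\le e^{\lambda}$ per interval. Over $[0,n]$ this yields
\[
\|\hat\theta_n\|\le \|\hat\theta_0\|\exp\Big(-\big(\tfrac{K}{N}C_\delta\lambda_2-L_f(2+C_{\Tilde G})C_{\Tilde G}\big)n+C_{\Tilde G}\,\overline{\nN}(\cdot,\mathbf W,[0,n])\Big).
\]
By subadditivity of $\overline{\nN}$ over unit intervals, stationarity of increments and ergodicity of $\Xi$ (\nameref{W}), Birkhoff's theorem gives
\[
\tfrac1n\overline{\nN}(\cdot,[0,n])\to\E\,\overline{\nN}(\tfrac1{16C_p},\mathbf W,[0,1])
\]
almost surely; this expectation is finite by \eqref{N_up} and \eqref{kolmo}. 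Therefore for any $\mu$ as in \eqref{rate_bound} there is a random $n_0(\omega)$ beyond which the exponent is at most $-\mu n$. We then set
\[
r^\star(\omega)=r(\omega)\sup_n \exp\big(\mu n-(\text{exponent})\big)<\infty,
\]
and bound intermediate times $t\in[n,n+1]$ by the one-interval estimate.

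\textbf{Main obstacle: the invariance of $I_\delta$.} Steps 1–2 only hold while the trajectory stays in $I_\delta$, whereas the noise could push it out early. We choose
\[
r(\omega)=\delta\Big/\Big((1+\lambda)\sup_{n}\exp\big(C_{\Tilde G}\overline{\nN}(\cdot,[0,n])-(\cdots)n\big)\Big),
\]
which is positive by the ergodic limit, and argue by a first-exit-time contradiction that $\|\hat\theta_t\|<\delta$ for all $t$. The delicate points are getting the multiplicative bound $\|\eta\|\le\lambda\|z\|$ from the rough sewing estimates, and justifying the Birkhoff step for the non-additive $\overline{\nN}$. For the latter we would first try the subadditive ergodic theorem, and fall back on the bound by the sum over unit intervals if needed.
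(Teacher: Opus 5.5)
Your proposal is correct and is essentially the paper's argument with the black box opened. The paper verifies the Lyapunov conditions for $V(\theta)=\|\theta\|$ on the zero-mean subspace (via Lemma~\ref{lemma1} and the spectral gap, with $d=\frac{K}{N}C_\delta\lambda_2$), then reuses the per-greedy-interval estimate from the proof of Theorem~23 in \cite{duc2025strong}; that estimate is exactly your Step~2 (loss $L_f(2+\lambda)\lambda$ and a factor $e^{\lambda}$ per restart), followed by the same Birkhoff concatenation and the same form of $r(\omega)$.

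Two small fixes are needed. First, Birkhoff should be applied to $\sum_{k}\overline{\nN}(\cdot,\mathbf{W},[k,k+1])$, obtained by restarting the greedy times at each integer (your fallback, and what the paper does), not to $\frac1n\overline{\nN}(\cdot,\mathbf{W},[0,n])$, whose limit need not equal $\E\,\overline{\nN}(\frac{1}{16C_p},\mathbf{W},[0,1])$. Second, in $r^\star(\omega)$ the supremum should be of $\exp(\mu n+\text{exponent})$ rather than $\exp(\mu n-\text{exponent})$.
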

\begin{proof}
\textit{\textbf{Step 1}}:
        In order to prove the exponential stability we want to show that we have a Lyapunov function that satisfies the hypotheses of \cite[Theorem 23]{duc2025strong}. In our case it is sufficient to consider the following Lyapunov function
        $$V(\theta):=\|\theta\|=\sqrt{\sum_{i=1}^N\theta_i^2}.$$
        Let us fix $\delta\in(0,\pi/2)$ and consider the compact sub-domain $I_\delta=[-\delta,\delta]\subseteq[-\pi,\pi]$. Of course we have that $V\in C^1(I_\delta\setminus\{0\};\R)\cap C^0(I_\delta;\R)$ and it satisfies trivially some of technical hypotheses required~\cite[Theorem 23]{duc2025strong} given by: 
       \begin{equation}\label{Lya1}
           \|\nabla V(\theta)\|\leq L_V\qquad\forall\theta\in I_\delta
       \end{equation}
       \begin{equation}\label{Lya2}
        \alpha\|\theta\|+V(0)\leq V(\theta)\leq\beta(1+\|\theta\|)\qquad\forall\theta\in I_\delta
       \end{equation}
        with $L_V=\alpha=\beta=1$. Lastly we need to prove that $\exists\; d>0$ such that
        \begin{equation}\label{Lyapunov_cond}
            \langle\nabla V(\theta),f(\theta)\rangle\leq-d\left[V(z)-V(0)\right]\qquad\forall\theta\in I_\delta
        \end{equation}
        with $f(\theta)=(f_1,\cdots,f_N)(\theta)$ defined as 
        $$f_i(\theta)=\frac{K}{N}\sum_{j=1}^Na_{ij}\sin{(\theta_j(t)-\theta_i(t))}\qquad\forall i=1,\ldots,N.$$
        We start by noticing that
        $$\nabla V(\theta)_i=\frac{\theta_i}{\|\theta\|}\qquad\forall i=1,\ldots,N$$
        so that the scalar product becomes
        \begin{align*}
        &\langle\nabla V(\theta),f(\theta)\rangle=\frac{1}{\|\theta\|}\frac{K}{N}\sum_{i=1}^N\theta_i\sum_{j=1}^Na_{ij}\sin(\theta_j-\theta_i)=\frac{1}{\|\theta\|}\frac{K}{2N}2\sum_{i,j=1}^Na_{ij}\theta_i\sin(\theta_j-\theta_i)=\\
        &=\frac{1}{\|\theta\|}\frac{K}{2N}\left(-\sum_{i,j=1}^Na_{ij}\theta_i\sin(\theta_i-\theta_j)+\sum_{i,j=1}^Na_{ij}\theta_i\sin(\theta_j-\theta_i)\right)=\\
        &=\frac{1}{\|\theta\|}\frac{K}{2N}\left(-\sum_{i,j=1}^Na_{ji}\theta_j\sin(\theta_j-\theta_i)-\sum_{i,j=1}^Na_{ij}(-\theta_i)\sin(\theta_j-\theta_i)\right)=\\
        &=-\frac{1}{\|\theta\|}\frac{K}{2N}\sum_{i,j=1}^Na_{ij}(\theta_j-\theta_i)\sin(\theta_j-\theta_i)\leq-\frac{1}{\|\theta\|}\frac{KC_\delta}{2N}\sum_{\substack{i,j=1}}^Na_{ij}(\theta_j-\theta_i)^2
        \end{align*}
        where we have used that
        $$\theta\sin(\theta)\geq \textcolor{black}{C_{\delta}} \theta^2\qquad\forall \theta\in I_{2\delta}$$
        with $\textcolor{black}{C_{\delta}}=\sin(2\delta)/2\delta\in(0,1]$, and the symmetry of $\mathcal{A}$.
        In order to proceed with the proof we would like to rewrite the adjacency matrix $\mathcal{A}$ in a more convenient way. In particular, we want to redefine the diagonal coefficients of $\mathcal{A}$ as 
        $$a_{ii}=-\sum_{j=1,j\neq i}a_{ij}.$$ With an abuse of notation we will still denote with $\mathcal{A}$ the new adjacency matrix of the system.
        This procedure does not change the dynamics of our system because the entries $a_{ii}$ have never played a role in \eqref{RKM} and they can be set freely as we prefer. In this way we have that $-\mathcal{A}$ is a classical Laplacian matrix. In particular we have that
\begin{align*}
    \langle\nabla V(\theta),f(\theta)\rangle\leq-\frac{1}{\|\theta\|}\frac{KC_\delta}{2N}\sum_{\substack{i,j=1}}^Na_{ij}(\theta_j-\theta_i)^2&=-\frac{1}{\|\theta\|}\frac{KC_\delta}{N}\sum_{\substack{i,j=1}}^N-a_{ij}\theta_j\theta_i=\\
    &=-\frac{1}{\|\theta\|}\frac{KC_\delta}{N}\theta^T(-\mathcal{A})\theta
\end{align*}
where we have used that $\sum_i^Na_{ij}=\sum_j^Na_{ij}=0$. Now in order to finish the proof we want to obtain a bound of the form
\begin{equation}\label{lower_bound}
             \theta^\top(-\mathcal{A})\theta\geq\lambda \|\theta\|^2\qquad\text{for some }\lambda>0.
        \end{equation}
        Let us denote with $0=\lambda_1\leq\lambda_2\leq\cdots\leq\lambda_N$ the eigenvalues of $-\mathcal{A}$ arranged in nondecreasing order, counted with their multiplicity. Then since the graph is connected we can see \cite[Lemma 1.7]{chung1997spectral} that we have $\lambda_2>0$. Furthermore from \cite[Theorem 4.2.2]{horn2012matrix} we obtain that
       $$\lambda_2=\min_{\substack{\theta\neq0\\s.t.\theta\perp\underline{1}}}\frac{\theta^\top(-\mathcal{A})\theta}{\|\theta\|^2},$$
       \textcolor{black}{where $\underline{1}=\{1\}^N$ denotes the $N$-dimensional vector whose entries are all equal to $1$}. Due to the symmetries of the system and Lemma \ref{lemma1} we have that both the solutions of \eqref{reduced RKM} and the transformed solutions after the Doss-Sussmann transformation are perpendicular to $\underline{1}$. This is enough to obtain \eqref{lower_bound} and deduce \eqref{Lyapunov_cond} with $d=\frac{KC_\delta\lambda_2}{N}$. The exponential stability follows now from \cite[Theorem 23]{duc2025strong}.\jump\jump
\textit{\textbf{Step 2}}: We are now interested in investigating the rate of convergence $\mu$ towards the synchronization and the size $r(\omega)$ of the random neighborhood $\mathcal{U}=\mathcal{U}(\omega)$ on which the synchronization takes place. In the proof of \cite[Theorem 23]{duc2025strong} it is shown that $\forall t\in[0,\tau]$ 
$$V(y_t)-V(0)\leq\exp\left\{ -t\eta+\frac{L_V\lambda}{\alpha}\right\}\left[V(y_0)-V(0)\right]$$
with $\lambda<\textcolor{black}{\min\left\{\frac{\alpha d}{3L_fL_V},1\right\}}$ so that one has that $\eta\coloneqq d-\frac{1}{\alpha}L_fL_V(2+\lambda)\lambda>0.$ This implies, by induction, that 
$$V(y_1)-V(0)\leq\exp\left\{ -\eta+\frac{L_V\lambda}{\alpha}\textcolor{black}{\overline{\nN}}\left(\frac{\lambda}{16C_pC_{\Tilde{G}}},\textbf{W},[0,1]\right)\right\}\left[V(y_0)-V(0)\right].$$
Following the same argument one obtains $\forall t\in[n,n+1]$ that
\begin{align*}
    V(y_t)-V(0)&\leq\exp\left\{ -\eta(t-k)+\frac{L_V\lambda}{\alpha}\textcolor{black}{\overline{\nN}}\left(\frac{\lambda}{16C_pC_{\Tilde{G}}},\textbf{W},[n,t]\right)\right\}\left[V(y_n)-V(0)\right]\leq\\
    &\leq\exp\left\{ -\eta(t-n)+\frac{L_V\lambda}{\alpha}\textcolor{black}{\overline{\nN}}\left(\frac{\lambda}{16C_pC_{\Tilde{G}}},\textbf{W},[n,n+1]\right)\right\}\left[V(y_n)-V(0)\right].
\end{align*}
So, putting everything together, we obtain $\forall t\in\R^+$, with $t=n+t_0$, $n\in\N$ and $t_0\in[0,1)$ that
\begin{equation}\label{exp_est}
    V(y_t)-V(0)\leq\exp\left\{ -\eta t+\frac{L_V\lambda}{\alpha}\sum_{k=0}^n\textcolor{black}{\overline{\nN}}\left(\frac{\lambda}{16C_pC_{\Tilde{G}}},\textbf{W},[k,k+1]\right)\right\}\left[V(y_0)-V(0)\right].
\end{equation}
We now choose $0<C_{\Tilde{G}}<\lambda<\lambda_0<\textcolor{black}{\min\left\{\frac{\alpha d}{3L_fL_V},1\right\}}$ for sufficiently small $\lambda_0$ such that
$$\Tilde{\mu}\coloneqq d-\frac{1}{\alpha}L_fL_V(2+\lambda_0)\lambda_0-\frac{L_V\lambda_0}{\alpha}\E \textcolor{black}{\overline{\nN}}\left(\frac{1}{16C_p},\textbf{W},[0,1]\right)>0.$$
In order to have stability we ask that $\|y_0\|\leq r(\omega)$ with
$$r(\omega)\coloneqq\epsilon\inf_{n\geq0}\exp\left(\eta n-\frac{L_V\lambda}{\alpha}\sum_{k=0}^n\textcolor{black}{\overline{\nN}}\left(\frac{\lambda}{16C_pC_{\Tilde{G}}},\textbf{W},[k,k+1]\right)\right),$$
for any $\epsilon\in(0,\delta)$. We notice that $r(\omega)>0$ almost surely, indeed we have that
$$\eta n-\frac{L_V\lambda}{\alpha}\sum_{k=0}^n\textcolor{black}{\overline{\nN}}\left(\frac{\lambda}{16C_pC_{\Tilde{G}}},\textbf{W},[k,k+1]\right)\underset{n\to\infty}\longrightarrow+\infty\quad\text{almost surely}.$$
This follows from the Birkhoff's ergodic theorem, so that for all $\varepsilon>0$ there exists $n(\omega)$ such that for all $n\geq n(\omega)$ one has
$$\frac{1}{n}\sum_{k=0}^n\textcolor{black}{\overline{\nN}}\left(\frac{\lambda}{16C_pC_{\Tilde{G}}},\textbf{W},[k,k+1]\right)\leq\E \textcolor{black}{\overline{\nN}}\left(\frac{\lambda}{16C_pC_{\Tilde{G}}},\textbf{W},[0,1]\right)+\varepsilon\leq\E \textcolor{black}{\overline{\nN}}\left(\frac{1}{16C_p},\textbf{W},[0,1]\right)+\varepsilon$$
and consequently, always for $n\geq n(\omega)$, we have that
\begin{align*}
    \eta n-\frac{L_V\lambda}{\alpha}\sum_{k=0}^n\textcolor{black}{\overline{\nN}}\left(\frac{\lambda}{16C_pC_{\Tilde{G}}},\textbf{W},[k,k+1]\right)&\geq n\left(\eta-\frac{L_V\lambda}{\alpha}\E \textcolor{black}{\overline{\nN}}\left(\frac{1}{16C_p},\textbf{W},[0,1]\right)-\frac{L_V\lambda}{\alpha}\varepsilon\right)\\
    &\geq n\left(\Tilde{\mu}-\frac{L_V\lambda_0}{\alpha}\varepsilon\right)=n\mu
\end{align*}
where we have chosen an arbitrary $\varepsilon_0\in(0,\frac{\alpha}{L_V\lambda_0}\Tilde{\mu})$ and defined $\mu\coloneqq\Tilde{\mu}-\frac{L_V\lambda_0}{\alpha}\varepsilon_0>0$. This in particular shows that $r(\omega)>0$ almost surely. With this choice of $r(\omega)$ we obtain that the trivial solution is stable. Now combining \eqref{exp_est} with the previous argument implies that for all $t\in[n,n+1]$ for every $n\geq n(\omega)$ we have
$$V(y_t)-V(0)\leq e^{-\mu t}\left(V(y_0)-V(0)\right).$$
For the other indexes we define
$$R(\omega)\coloneqq\max_{0\leq n<n(\omega)}\exp{\left(\mu(n+1)-\eta n+\frac{L_V\lambda}{\alpha}\sum_{k=0}^n\textcolor{black}{\overline{\nN}}\left(\frac{\lambda}{16C_pC_{\Tilde{G}}},\textbf{W},[k,k+1]\right)\right)}$$
where obviously we have that $1\leq e^\mu\leq R(\omega)<\infty$ and that $\forall t\in[k,k+1]$ with $k+1\leq n(\omega)$
$$V(y_t)-V(0)\leq R(\omega)e^{-\mu t}\left(V(y_0)-V(0)\right).$$ putting the two cases together we obtain that the same inequality holds $\forall t\in\R^+$. Finally, using the fact that $V$ is Lipschitz and \eqref{Lya1} and the previous reasoning one obtains the exponential stability
$$\|y_t\|\leq r^\star(\omega)e^{-\mu t}$$
with $r^\star(\omega)= R(\omega)r(\omega)/\alpha$.
\end{proof}
\begin{rk}
   If the ergodicity assumption on the Wiener shift $\Xi$ is dropped, one can still invoke Birkhoff’s ergodic theorem when estimating stopping times, see \cite[Theorem 4.15]{hawkins2021ergodic}. However, the resulting limits are no longer deterministic constants but random variables. Consequently, the assertions of the main theorem remain valid almost surely, all the stability criteria and parameters would be path dependent. 
\end{rk}
\textcolor{black}{
From the proof above, we observe that $\lambda_0$ must be chosen such that
$$\lambda_0<\min\left\{\frac{KC_\delta\lambda_2}{3NL_f},1\right\},$$
and, moreover, be sufficiently small to ensure that
$$\frac{K}{N}C_\delta\lambda_2-L_f(2+\lambda_0)\lambda_0-\lambda_0\E\textcolor{black}{\overline{\nN}}\left(\frac{1}{16C_p},\textbf{W},[0,1]\right)>0.$$
In particular, these conditions guarantee that, whenever $C_{\Tilde{G}}<\lambda_0$, the quantity appearing as an upper bound for $\mu$ in \eqref{rate_bound} remains strictly positive.} Regarding the random basin of attraction, we have an easy deterministic upper bound that follows directly from its definition, i.e.,
    $$r(\omega)\leq\epsilon$$
    where $\epsilon$ can be chosen arbitrarily in $(0,\delta)$ with $\delta\in[0,\pi/2)$. Of course $\delta$ has to be chosen so that it is bigger than $\max_{ij}\|\theta_i(0)-\theta_j(0)\|$.
    One can give a more precise lower bound by using \eqref{N_up}, but will end up with a random bound as well, up to imposing a priori a deterministic bound on $\tb\textbf{W}\tb_{p-\text{var},[a,b]}$.\\
    
    On the other hand we have managed to give a satisfactory bound on the rate of convergence $\mu$, that involves all the relevant parameters of the system, as how spread are the initial data, encoded by $C_\delta$, the intensity of the noise $C_{\Tilde{G}}$, \textcolor{black}{the coupling strength $K$, the Lipschitz constant of the deterministic drift $L_f$,} the real rough part $\textbf{W}$ of the system and, last but not least, the interactions between the particles encoded by $\lambda_2$. \textcolor{black}{This theoretical bound is consistent with physical intuition, and we now explain why by analyzing each term in the upper bound.}

    \textcolor{black}{
   The first term depends only on the deterministic structure of the model and is the one that contributes positively to synchronization. Indeed, increasing $K$ allows for potentially faster synchronization. The same effect can be achieved by considering highly concentrated initial data, so that $\delta$ is smaller and $C_\delta=\sin(2\delta)/(2\delta)$ becomes larger. Faster synchronization is also promoted by increasing the connectivity of the underlying graph, thereby making $\lambda_2$ larger, as we shall discuss later.}

   \textcolor{black}{
   The last two terms involve the noisy component of the system and therefore it is less obvious whether they act for, or against, synchronization. In particular, the first of these terms accounts for the interplay between the noise intensity $C_{\Tilde{G}}$ and the Lipschitz constant of the deterministic drift $L_f$. In particular, increasing $C_{\Tilde{G}}$ slows down the synchronization process. The fact that the constant $L_f$ amplifies this effect can be interpreted as a magnification of the small perturbations induced by the noise: the more sensitive the deterministic dynamics are, the stronger the impact of those perturbations on the overall behaviour of the system.
   The final term arises from the purely stochastic contribution. Here, the quantity $\E\textcolor{black}{\overline{\nN}}(\cdot)$ captures the intrinsic nature of the noise, rather than merely its intensity $C_{\Tilde{G}}$. Indeed, $\E\textcolor{black}{\overline{\nN}}(\cdot)$ is expected to be larger whenever the rough noise $\textbf{W}$ exhibits highly oscillatory behaviour. From a physical perspective, such rapid oscillations naturally hinder the emergence of synchronization and therefore represent an obstacle to its eventual occurrence.}

   \textcolor{black}{
   The previous discussion also clarifies the relationship between $\lambda_0$ and the underlying physical system. Indeed, it quantifies how large $\lambda_0$ may be and, therefore, how much noise the system can sustain while preserving the synchronization mechanism, in terms of the model parameters.
   }
   \textcolor{black}{
   \begin{rk}
   We notice that $L_f$ also depends on $K$, so increasing the latter parameter does not necessarily result in a straightforward improvement of the rate of convergence. Indeed, it can be shown that
    $$\frac{K}{N}C_\delta \lambda_N\leq L_f\leq\frac{K}{N}\lambda_N$$
    where $\lambda_N$ denotes the largest positive eigenvalue of the Laplacian matrix associated with $\mathcal{A}$. Consequently, the combined effect of the first two terms in the upper bound \eqref{rate_bound} is proportional to
    $$\frac{K}{N}\Big(\lambda_2-\lambda_N(2+C_{\Tilde{G}})C_{\Tilde{G}}\Big).$$
    Therefore, whenever $C_{\Tilde{G}}\ll1$, increasing $K$ does indeed lead to a faster synchronization rate.
   \end{rk}}

    \begin{rk}
    The last point we would like to highlight concerns the parameter $\lambda_2$, in particular its dependence on the structure of the underlying graph. This can be seen using the so-called Cheeger inequality and its related Cheeger constant defined as
    $$h(\mathcal{G})\coloneqq\min\left\{\frac{|\partial X|}{|X|}\;\bigg\vert\; X\subseteq V(\mathcal{G})\;,\;0<|X|\leq\frac{1}{2}|V(\mathcal{G})|\right\}.$$
    Intuitively this constant measures the connectivity of the graph $\mathcal{G}$. Computing this constant is in general an NP-hard problem but for simple enough graphs it is possible to explicitly determine it. Graph classes for which this is possible are, e.g., complete graphs, cycles, trees, paths, complete bipartite graphs, Petersen's graphs, and n-dimensional cube graphs; see \cite{mohar1989isoperimetric} for more details. In the same paper many inequalities regarding $h(\mathcal{G})$ are explored among them the Cheeger inequality that is given, for every graph $\mathcal{G}$, as
    $$\frac{h(\mathcal{G})^2}{2\Delta}\leq\lambda_2\leq2h(\mathcal{G})$$
    with $\Delta$ the maximal vertex degree. Actually the lower bound is stronger whenever $\mathcal{G}$ is not a complete graph with 1,~2 or 3 edges, and it is
    $$h(\mathcal{G})\leq\sqrt{\lambda_2(2\Delta-\lambda_2)}.$$
    These inequalities can be easily generalized for adjacency matrices with real entries, as we are considering in our case. With the help of these inequalities one can simplify and make more explicit the upper bound on the convergence rate $\mu$.    
    \end{rk}
    We now state a Theorem regarding the synchronization of the initial system \eqref{RKM}, which shows how the long term behavior of the system is ruled by the evolution on the mean phase $\Theta$.
    \begin{theorem}\label{theor_synch}
        Under the hypotheses \nameref{A}, \nameref{B}, \nameref{W}, \nameref{G} then there exists a constant $\lambda_0$ such that for any $C_{G}<\frac{\lambda_0}{2}\frac{N}{N-1}$ we have that almost surely, for initial data satisfying
        \begin{equation}\label{hyp_cor_1}
            \underset{i,j}{\max}\|\theta_i(0)-\theta_j(0)\|\leq2\delta\qquad\text{for some $\delta\in[0,\pi/2)$}
        \end{equation}
        \begin{equation}\label{hyp_cor_2}
            \underset{i}{\max}\bigg\|\theta_i(0)-\frac{1}{N}\sum_{j=\textcolor{black}{1}}^N\theta_j(0)\bigg\|\leq r(\omega)
        \end{equation}
    with $r(\omega)$ as in Theorem \ref{main_th}, it holds 
    $$\lim_{t\rightarrow\infty}\|\theta_i(t,\omega)-\Theta(t,\omega)\|=0,\qquad\text{for every }i=1,\ldots,N.$$
    Moreover if there exists $\tilde{p}\in\left(\frac{1}{\gamma},p\right)$ such that
    \begin{equation}\label{moment_est}
        \E\left(\textcolor{black}{\overline{\nN}}_{\tilde{p}}\left(\frac{1}{16C_{\tilde{p}}},\textbf{W},[0,1]\right)^{\frac{2(\tilde{p}+p)-1}{p}}\right)<+\infty
    \end{equation}
    then we have that 
    $$\lim_{t\rightarrow\infty}\|\Theta(t,\omega)-\Theta_\infty(\omega)\|=0\qquad\text{almost surely},$$
    where
    $$\Theta_\infty(\omega)\coloneqq\frac{1}{N}\sum_{i=1}^N\theta_i(0)+\frac{1}{N}\int_0^\infty\sum_{i=1}^N\sum_{j=1}^mG_{ij}(\theta(s,\omega))\d\textbf{W}^j_s.$$
    \end{theorem}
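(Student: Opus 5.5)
Two parts. First, reduce to Theorem \ref{main_th} applied to $\hat\theta=\theta-\Theta$, which solves \eqref{reduced RKM}. Hypothesis \eqref{hyp_cor_1} places the relative phases in $I_\delta$, and \eqref{hyp_cor_2} gives $\|\hat\theta(0)\|\le r(\omega)$, up to the harmless change between the max norm and the Euclidean norm, which can be absorbed in $\epsilon$.

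To use Theorem \ref{main_th} we need $C_{\tilde G}<\lambda_0$. Since $\tilde G_{ij}=G_{ij}-\frac1N\sum_k G_{kj}$, each derivative of order $\le 3$ of $\tilde G$ is bounded by $(1+\frac{N-1}{N})C_G$, or more crudely by $2\frac{N-1}{N}C_G$ after using the centering. So the hypothesis $C_G<\frac{\lambda_0}{2}\frac{N}{N-1}$ gives $C_{\tilde G}<\lambda_0$; I would redo this constant check carefully. Theorem \ref{main_th} then gives $\|\hat\theta(t)\|\le r^\star(\omega)e^{-\mu t}$, which is the first claim. The phases stay inside $I_\delta$ for all times because $r\le\epsilon<\delta$.

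Second, the convergence of $\Theta$. By \eqref{mean_phase},
\[
\Theta(t)=\Theta(0)+\frac1N\int_0^t\sum_{i,j}G_{ij}(\theta(s))\,\d\textbf{W}^j_s .
\]
By rotational invariance \eqref{rot_inv}, $G(\theta)=G(\hat\theta)$, and since $G(0)=0$ and $G$ is Lipschitz, $\|G(\theta(s))\|\le C_G\|\hat\theta(s)\|\le C_Gr^\star e^{-\mu s}$. It therefore suffices to show that $\sum_n \|\int_n^{n+1}Y_s\,\d\textbf{W}_s\|<\infty$, where $Y=\bar G(\hat\theta)$ with $\bar G=\frac1N\sum_iG_{i\cdot}$. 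This gives a Cauchy criterion for the integral and identifies the limit as $\Theta_\infty$.

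On each unit interval I would use the rough-integral estimate, based on the sewing lemma \cite[Theorem 4.10]{friz2014course}, in the $\tilde p$-variation scale:
\[
\Big\|\int_n^{n+1}Y\,\d\textbf{W}\Big\|\lesssim \|Y_n\|\,\|W\|_{\tilde p,[n,n+1]}+\|Y'_n\|\,\|\mathbb W\|_{\tilde p/2,[n,n+1]}+\|(Y,Y')\|_{\tilde p,[n,n+1]}\,\tb\textbf{W}\tb_{\tilde p,[n,n+1]} .
\]
The Gubinelli derivative is $Y'=\D\bar G(\hat\theta)\tilde G(\hat\theta)$, again $O(\|\hat\theta\|)$. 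The remainder and variation norms of $\hat\theta$ on $[n,n+1]$ are controlled by $\sup_{[n,n+1]}\|\hat\theta\|$ times polynomial factors in $\tb\textbf{W}\tb_{\tilde p,[n,n+1]}$. These come from the standard greedy-time estimate for RDE solutions: on each greedy subinterval the increments are bounded linearly, and one concatenates. The remainder of $Y$ is bounded by $\|\hat\theta\|$-weighted terms, using $\D^2G$ for the Taylor remainder.

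So each term is at most $r^\star e^{-\mu n}P_n(\omega)$, where $P_n$ is a polynomial in $\overline{\nN}_{\tilde p}(\frac1{16C_{\tilde p}},\textbf{W},[n,n+1])$, and the degree tracking should yield the exponent $\frac{2(\tilde p+p)-1}{p}$. Using $\tilde p<p$ lets the $p$-variation control from \eqref{N_up} be converted.

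The main obstacle is summability. I expect it to follow from stationarity and the moment bound \eqref{moment_est}, via Borel--Cantelli: $\mathbb P(P_n>e^{\mu n/2})\le e^{-\mu n \kappa}\E P_0^{\kappa'}$ is summable, so a.s. $P_n\le e^{\mu n/2}$ eventually, and the series converges. Alternatively one can use Birkhoff to get $\frac1n\log^+P_n\to0$. The delicate part is getting explicit, polynomial-in-$\overline{\nN}$ bounds for the controlled-path norms of $\hat\theta$ uniformly on unit intervals, with the correct power matching \eqref{moment_est}.
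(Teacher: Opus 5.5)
Your proposal is correct. The first part matches the paper's argument exactly: the bound $C_{\tilde G}\le 2\frac{N-1}{N}C_G$, obtained from $\tilde G_{ij}=(1-\frac1N)G_{ij}-\frac1N\sum_{k\neq i}G_{kj}$, followed by Theorem~\ref{main_th}. Note that this is the sharper of the two bounds you mention, not the cruder one, and it is the only one compatible with the hypothesis $C_G<\frac{\lambda_0}{2}\frac{N}{N-1}$.

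For the convergence of $\Theta$ you take a genuinely different route. The paper uses the $p$-variation sewing estimate on $[k,k+1]$. It transfers the exponential decay of the oscillation of $Y$ and $Y'$ to $\tb Y'\tb_{p\text{-var}}$ and $\tb R^Y\tb_{q\text{-var}}$ by interpolation, $\tb\cdot\tb_{p\text{-var}}\lesssim\tb\cdot\tb_{\tilde p\text{-var}}^{\tilde p/p}(\mathrm{osc})^{1-\tilde p/p}$, and controls the non-decaying $\tilde p$-variation factors by the general greedy-time bound \eqref{sol_bound}. It then shows $\E\mathfrak{X}_k\le D$ uniformly in $k$ and sums expectations. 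This is where the requirement $\tilde p<p$ and the exponent $\frac{2(\tilde p+p)-1}{p}$ in \eqref{moment_est} come from. The price is that the increments only decay like $e^{-\mu k(1-\tilde p/p)}$.

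You instead exploit $f(0)=\tilde G(0)=0$ to bound the controlled-path seminorms of $\hat\theta$ on each greedy interval linearly by $\sup\|\hat\theta\|$. After concatenation this gives $r^\star(\omega)e^{-\mu n}P_n$ with the full rate. That estimate is true, but it is not in the paper, so you would have to prove it. The only non-routine point is the quadratic Taylor term $\tb\hat\theta\tb^2_{\tilde p\text{-var}}$ inside $R^{\tilde G(\hat\theta)}$. It is absorbed using the standard (non-decaying) a priori bound on greedy intervals together with the smallness of $\tb\textbf{W}\tb_{\tilde p\text{-var}}$ there.

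In exchange, your route gains the following:
(i) Your pathwise Borel--Cantelli step only needs $\E\log^+\overline{\nN}_{\tilde p}(\frac{1}{16C_{\tilde p}},\textbf{W},[0,1])<\infty$. So the particular exponent in \eqref{moment_est} is irrelevant, and your ``degree tracking'' remark and the conversion between $p$- and $\tilde p$-variation are unnecessary.
(ii) The random constant $r^\star(\omega)$ stays outside every expectation. The paper's expectation-based argument instead writes the deterministic constant $2\pi$ in its place.
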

    \begin{proof}
        We first notice that, taking $\lambda_0$ from Theorem \ref{main_th}, if $C_G<\frac{\lambda_0}{2}\frac{N}{N-1}$ then it follows that $C_{\tilde{G}} < \lambda_0$. This follows from the fact that
        \begin{align*}
            \tilde{G}_{ij}=G_{ij}-\frac{1}{N}G_{ij}-\frac{1}{N}\sum_{\underset{k\neq i}{k=1}}^NG_{kj}\leq\left(1-\frac{1}{N}\right)\|G\|_\infty+\frac{N-1}{N}\|G\|_\infty=2\left(1-\frac{1}{N}\right)\|G\|_\infty.
        \end{align*}
        The other hypotheses, including the ones regarding the initial data, ensure that almost surely
        $$\Big\|\hat{\theta}_i(t,\omega)\Big\|\underset{t\rightarrow\infty}{\longrightarrow}0\qquad\text{ for every $i=1,\ldots,N$},$$
        so that, by the definition of $\hat{\theta}$ in \eqref{change_coord}, we have
        $$\Big\|\theta_i(t,\omega)-\Theta(t,\omega)\Big\|\underset{t\rightarrow\infty}{\longrightarrow}0\qquad\text{ for every $i=1,\ldots,N$}.$$
        Moreover, due to \eqref{mean_phase}, we have that 
        $$\Theta(t,\omega)=\Theta(0)+\frac{1}{N}\int_0^t\sum_{i=1}^N\sum_{j=1}^mG_{ij}(\theta(s,\omega))\d\textbf{W}^j_s$$
        so we are left with justifying that, almost surely, we have
        $$\Big\|\Theta(t,\omega)-\Theta_\infty(\omega)\Big\|\underset{t\rightarrow\infty}{\longrightarrow}0.$$
        In particular we are going to prove that $\Theta_\infty(\omega)$ exists almost surely, then the latter convergence can be obtained with the same techniques. In order to achieve this we fix $i\in\{1,\ldots,N\}$ and $j\in\{1,\ldots,m\}$ and we consider 
        $$\mathcal{I}_k(\omega)\coloneqq\int_0^{k}G_{ij}(\theta(s,\omega))\d\textbf{W}^j_s,\qquad\forall k\in\N^+$$
        and we prove that almost surely this is a Cauchy sequence in $\R$. Of course once we have this for every fixed index then we obtain the same for the finite summation that we are interested in. In particular we want to estimate 
        $$\left\|\mathcal{I}_{k_2}(\omega)-\mathcal{I}_{k_1}(\omega)\right\|=\left\|\int_{k_1}^{k_2}G_{ij}(\theta(s,\omega))\d\textbf{W}^j_s\right\|$$
        for all $k_1\leq k_2\in\N^+$. In order to bound such a quantity we want to use a similar version of Theorem~\ref{gub_int}, see \cite[Theorem 31]{friz2017general}, which is possible thanks to the sewing lemma, under $p-$var norm as follows 
        \begin{align*}
        \left\|\int_k^{k+1}Y_s\d\textbf{W}_s\right\|&\leq\|Y_k\|\|W_{k,k+1}\|+\|Y'_k\|\|\mathbb{W}_{k,k+1}\|+\\&\hspace{-1cm}+C_p\left(\tb W\tb_{p-\text{var},[k,k+1]}\tb R^Y\tb_{q-\text{var},[k,k+1]}+\tb \mathbb{W}\tb_{q-\text{var},[k,k+1]}\tb Y'\tb_{p-\text{var},[k,k+1]}\right)
        \end{align*}
        with constant $C_p>1$ independent of $\textbf{W}$ and $Y$, and $Y'$ the Gubinelli derivative of $Y$, defined in Appendix \ref{App_gub}. In our case we define 
        $$Y_s\coloneqq G_{ij}(\theta(s,\omega))=G_{ij}(\theta(s,\omega))-G_{ij}(\underline{\Theta}(s,\omega))$$
        where we have denoted with $\underline{\Theta}$ the $N$-dimensional vector with all the components equal to $\Theta$.
        Now let us bound the terms on the right-hand side of the previous inequality. In particular, we can easily obtain that
        $$\|Y_k\|\leq C_G\|\theta(k,\omega)-\Theta(k,\omega)\|\leq C_G2\pi e^{-\mu k}$$
        and, by using \eqref{kolmo} and the stationarity of the increments, we can deduce that
        \begin{equation}\label{finite_exp_1}
      \E\left(\|W_{k,k+1}\|\right)=\E\left(\|W_{0,1}\|\right)\leq\E\left(\|W_{0,1}\|^p\right)^{\frac{1}{p}}\leq C_{1,\gamma}^{\frac{1}{p}},
        \end{equation}
        \begin{equation}\label{finite_exp_2}
            \E\left(\|\mathbb{W}_{k,k+1}\|\right)=\E\left(\|\mathbb{W}_{0,1}\|\right)\leq\E\left(\|\mathbb{W}_{0,1}\|^q\right)^{\frac{1}{q}}\leq C_{1,\gamma}^{\frac{1}{q}}.
        \end{equation}
        We now focus our attention on estimating the Gubinelli derivative $Y'$ and the remainder $R^Y$. 
        By \cite[Lemma 7.3]{friz2014course} we can now estimate $Y'$ as follows
        \begin{align*}
            \|Y'_k\|&=\left\|\big(G_{ij}(\theta(k,\omega))\big)'\right\|\leq\left\|\big(G(\theta(k,\omega))\big)'\right\|=\Big\|\D G(\theta(k,\omega))G(\theta(k,\omega))\Big\|=\\
            &=\Big\|\D G(\theta(k,\omega))G(\theta(k,\omega))-\D G(\theta(k,\omega))G(\Theta(k,\omega))\Big\|\leq C_G\Big\|G(\theta(k,\omega))-G(\Theta(k,\omega))\Big\|\leq\\
            &\leq C_G^2\Big\|\theta(k,\omega))-\Theta(k,\omega)\Big\|\leq 2\pi C_G^2e^{-\mu k},
        \end{align*}
        where we used that $G\in C^3_b$ and \eqref{rot_inv}. Given this last estimate we are now  able to estimate the $p-$var norm of $Y'$. Indeed by definition we have that for every $k\leq s\leq t\leq k+1$
        $$\|Y'_{s,t}\|\leq\tb Y'\tb_{p-\text{var},[s,t]}\leq\tb Y'\tb_{\tilde{p}-\text{var},[s,t]}$$ for every $\tilde{p}\leq p$. Moreover, from the previous computation we infer that
        $$\|Y'_{s,t}\|\leq4\pi C^2_G e^{-\mu k}$$
        which trivially implies that
        $$\|Y'_{s,t}\|^p\leq\textcolor{black}{ \min\left\{  \tb Y'\tb^p_{\tilde{p}-\text{var},[s,t]},4\pi C^2_G e^{-\mu kp}\right\}.}$$
        Then by using that $\min\{a, b\}\leq a^cb^{1-c}$ for every $a,b\in\R^+$ and every $c\in[0,1]$ we obtain that
        $$\|Y'_{s,t}\|^p\leq\tb Y'\tb^{\tilde{p}}_{\tilde{p}-\text{var},[s,t]}(4\pi C^2_G e^{-\mu kp})^{1-\tilde{p}/p}= C\tb Y'\tb^{\tilde{p}}_{\tilde{p}-\text{var},[s,t]} e^{-\mu k(p-\tilde{p})}$$
        with $\tilde{p}\in(1/\gamma,p)$. Therefore, by considering any partition $k=t_1<t_2<\cdots<t_n=k+1$ and using \cite[Ex. 5.11]{friz2010multidimensional} we have that
        $$\sum_{i=1}^{n-1}\|Y'_{t_i,t_{i+1}}\|^p\leq C\tb Y'\tb^{\tilde{p}}_{\tilde{p}-\text{var},[k,k+1]} e^{-\mu k(p-\tilde{p})}$$
        which implies, after taking the supremum over all the partitions of $[k,k+1]$ that
        $$\tb Y'\tb_{p-\text{var},[k,k+1]}\leq C\tb Y'\tb^{\tilde{p}/p}_{\tilde{p}-\text{var},[k,k+1]} e^{-\mu k(1-\tilde{p}/p)}.$$
        We now derive a similar estimate even for the remainder $R^{Y}$, indeed by definition of the remainder we immediately get 
        \begin{align*}
            \left\|R^{Y}_{s,t}\right\|\leq\|Y_{s,t}\|+\|Y'_s\|\|W_{s,t}\|&\leq 4\pi C_Ge^{-\mu k}+2\pi C^2_G e^{-\mu k}\tb W\tb_{p-\text{var},[s,t]}\leq\\
            &\leq Ce^{-\mu k}\left(1+\tb W\tb_{p-\text{var},[k,k+1]}\right)
        \end{align*}
        for every $k\leq s\leq t\leq k+1$. By applying the same interpolation argument as before we have
        \begin{align*}
            \|R^{Y}\|_{q-\text{var},[k,k+1]}&\leq C\tb R^Y\tb^{\tilde{q}/q}_{\tilde{q}-\text{var},[k,k+1]}\left(1+\tb W\tb_{p-\text{var},[k,k+1]}\right)^{1-\tilde{q}/q}e^{-\mu k(1-\tilde{q}/q)}=\\
            &=C\tb R^Y\tb^{\tilde{p}/p}_{\tilde{q}-\text{var},[k,k+1]}\left(1+\tb W\tb_{p-\text{var},[k,k+1]}\right)^{1-\tilde{p}/p}e^{-\mu k(1-\tilde{p}/p)}
        \end{align*}
        with $\tilde{q}=\tilde{p}/2$.~Now, by combining all the previous bounds and using the estimates \eqref{est_g}, \eqref{est_remainder}, and \eqref{sol_bound}, we obtain that
        \begin{align*}
            &\left\|\int_k^{k+1}Y_s\d\textbf{W}_s\right\|\leq\|Y_k\|\|W_{k,k+1}\|+\|Y'_k\|\|\mathbb{W}_{k,k+1}\|+\\&\hspace{2cm}+C_p\left(\tb W\tb_{p-\text{var},[k,k+1]}\tb R^Y\tb_{q-\text{var},[k,k+1]}+\tb\mathbb{W}\tb_{q-\text{var},[k,k+1]^2}\tb Y'\tb_{p-\text{var},[k,k+1]}\right)\leq\\
            &\leq2\pi C_Ge^{-\mu k}\|W_{k,k+1}\|+2\pi C^2_Ge^{-\mu k}\|\mathbb{W}_{k,k+1}\|+C_p\tb W\tb_{p-\text{var},[k,k+1]}C\tb R^Y\tb^{\tilde{p}/p}_{\tilde{q}-\text{var},[k,k+1]^2}\times\\
            &\times\left(1+\tb W\tb_{p-\text{var},[k,k+1]}\right)^{1-\tilde{p}/p}e^{-\mu k(1-\tilde{p}/p)}+\\
            &\hspace{5.9cm}+C_p\tb\mathbb{W}\tb_{q-\text{var},[k,k+1]^2}C\tb Y'\tb^{\tilde{p}/p}_{\tilde{p}-\text{var},[k,k+1]} e^{-\mu k(1-\tilde{p}/p)}\leq\\
            &\leq\hat{C}e^{-\mu k(1-\tilde{p}/p)}\bigg[\|W_{k,k+1}\|+\|\mathbb{W}_{k,k+1}\|+\tb W\tb_{p-\text{var},[k,k+1]}
            \Big(\tb R^\theta\tb^{\tilde{p}/p}_{\tilde{q}-\text{var},[k,k+1]^2}+\\
            &+\tb W\tb^{\tilde{p}/p}_{\tilde{p}-\text{var},[k,k+1]}\tb \theta\tb^{\tilde{p}/p}_{\tilde{p}-\text{var},[k,k+1]}\Big)\left(1+\tb W\tb_{p-\text{var},[k,k+1]}\right)^{1-\tilde{p}/p}+\\
            &\hspace{5cm}+\tb\mathbb{W}\tb_{q-\text{var},[k,k+1]^2}\tb \theta\tb^{\tilde{p}/p}_{\tilde{p}-\text{var},[k,k+1]}\bigg]\eqqcolon\hat{C}e^{-\mu k(1-\tilde{p}/p)}\mathfrak{X}_k(\omega)
        \end{align*}
        where $\hat{C}$ is a constant, that can change its value from line to line, which contains all the scalars appearing from the other terms. Moreover the random variable $\mathfrak{X}_k$ can be bounded as follows:
        \begin{align*}
            &\mathfrak{X}_k\leq\|W_{k,k+1}\|+\|\mathbb{W}_{k,k+1}\|+\tb \theta,R^\theta\tb_{\tilde{p}-\text{var},[k,k+1]}^{\tilde{p}/p}\bigg(\tb W\tb_{p-\text{var},[k,k+1]}+\tb W\tb_{p-\text{var},[k,k+1]}^{2-\tilde{p}/p}+\\
            &+\tb W\tb_{p-\text{var},[k,k+1]}\tb W\tb_{\tilde{p}-\text{var},[k,k+1]}^{\tilde{p}/p}+\tb W\tb_{p-\text{var},[k,k+1]}^{2-\tilde{p}/p}\tb W\tb_{\tilde{p}-\text{var},[k,k+1]}^{\tilde{p}/p}+\tb \mathbb{W}\tb_{q-\text{var},[k,k+1]}\bigg)\leq\\
            &\leq\|W_{k,k+1}\|+\|\mathbb{W}_{k,k+1}\|+\tb \theta,R^\theta\tb_{\tilde{p}-\text{var},[k,k+1]}^{\tilde{p}/p}\bigg(\tb \textbf{W}\tb_{\tilde{p}-\text{var},[k,k+1]}+\tb \textbf{W}\tb_{\tilde{p}-\text{var},[k,k+1]}^{2-\tilde{p}/p}+\\
            &+\tb \textbf{W}\tb_{\tilde{p}-\text{var},[k,k+1]}^{1+\tilde{p}/p}+2\tb \textbf{W}\tb_{\tilde{p}-\text{var},[k,k+1]}^{2}\bigg)\leq \|W_{k,k+1}\|+\|\mathbb{W}_{k,k+1}\|+\\
            &+\left[\left(\|\theta_k\|+\frac{1}{C_{\tilde{p}}}\textcolor{black}{\overline{\nN}}_{\tilde{p}}\left(\mathfrak{C},\textbf{W},[k,k+1]\right)\right)^{\tilde{p}/p}e^{4\tilde{p}/p}\textcolor{black}{\overline{\nN}}_{\tilde{p}}\left(\mathfrak{C},\textbf{W},[k,k+1]\right)^{\frac{\tilde{p}-1}{p}}+\|\theta_k\|^{\tilde{p}/p}\right]\times\\
            &\hspace{1.5cm}\times\bigg(\tb \textbf{W}\tb_{\tilde{p}-\text{var},[k,k+1]}+\tb \textbf{W}\tb_{\tilde{p}-\text{var},[k,k+1]}^{2-\tilde{p}/p}+\tb \textbf{W}\tb_{\tilde{p}-\text{var},[k,k+1]}^{1+\tilde{p}/p}+2\tb \textbf{W}\tb_{\tilde{p}-\text{var},[k,k+1]}^{2}\bigg).
        \end{align*}
        From the last inequality, assumption \eqref{moment_est}, the stationarity of the increments of the noise, \eqref{finite_exp_1}, \eqref{finite_exp_2} and \eqref{N_down} we can deduce that
        $$\E\big(\mathfrak{X}_k(\omega)\big)\leq D\qquad\forall k\in\N.$$
        with $D\in\R^+$. Putting the latter all together we obtain
        $$\E\left(\left\|\int_{k}^{k+1}Y_s\d\textbf{W}^j_s\right\|\right)\leq\hat{C}De^{-\mu k(1-\tilde{p}/p)}$$
        which trivially implies that
        $$\sum_{k=0}^\infty\E\left(\left\|\int_{k}^{k+1}Y_s\d\textbf{W}^j_s\right\|\right)<+\infty.$$
        By using Borel-Cantelli Lemma 
        or Monotone Convergence Theorem, one deduces that
        $$\int_{k}^{k+1}G_{ij}(\theta(\omega,s))\d\textbf{W}^j_s\underset{k\rightarrow\infty}{\longrightarrow}0\qquad\text{almost surely},$$
        which immediately implies that
        $$\left\|\mathcal{I}_{k_2}(\omega)-\mathcal{I}_{k_1}(\omega)\right\|\underset{k_1,k_2\rightarrow\infty}{\longrightarrow}0\qquad\text{almost surely}$$
        so that $\mathcal{I}_k(\omega)$ is a Cauchy sequence for almost every $\omega$. Therefore, we have established that, almost surely,
        $$\Big\|\Theta(t,\omega)-\Theta_\infty(\omega)\Big\|\underset{t\rightarrow\infty}{\longrightarrow}0$$
        and this concludes the proof.
    \end{proof}
    \begin{rk}
        We would like to make a few observations regarding the latter proof. Firstly, in the second part of the proof, the exponential convergence of the phases $\theta_i$ is not required. In fact, any decay rate for which the $(1-\tilde{p}/p)$-th power is summable would be enough to conclude the proof. 
        
       Furthermore, we observe that \eqref{moment_est} is satisfied by fractional Brownian motions $B^H$ with Hurst parameter $1/4 \leq H \leq 1/2$. In fact, \cite[Remark 6.4]{CLL} shows that for this class of noises one has
      $$\mathbb{E}\left(e^{a \textcolor{black}{\overline{\nN}}_{\tilde{p}}\left(C,\mathbf{B}^H,[s,t]\right)}\right) < +\infty \qquad \forall a \in \mathbb{R}^+,$$
      for any $C \in \mathbb{R}^+$, any $s \leq t \in \mathbb{R}$, and any $\tilde{p} \in \left(\frac{1}{H},4\right)$.
    \end{rk}
    A straightforward extension of the previous result is possible under the more general hypothesis
    \begin{description}
        \item[(\textbf{AI})\label{AI}]  We suppose the $\mathcal{A}$ is symmetric with entries $a_{ij}\geq0$.
    \end{description}
That is the case when $\mathcal{A}$ does not represent a connected graph and can therefore be decomposed as
$$\mathcal{A}=\sum_{i=1}^M\mathcal{A}_i$$
for some $M=2,\ldots,N$ with $\mathcal{A}_i$ a matrix representing a connected graph $\mathcal{G}_i=\{V_i,E_i\}$, with indexes in $\mathcal{N}_i$, and such that for every $i\neq j$ we have that $\mathcal{G}_i$ is not connected with $\mathcal{G}_j$, i.e. $a_{kl}=0$ for every $k\in\mathcal{N}_i,l\in\mathcal{N}_j$. In order to completely decouple the evolution of the sub-populations of particles we furthermore define for every $\ell=1\ldots,M$
$$\Tilde{G}^\ell_{ij}(\theta)\coloneqq G_{ij}(\theta)-\frac{1}{|\mathcal{N}_\ell|}\sum_{k\in\mathcal{N}_\ell} G_{kj}(\theta)\qquad\forall i\in\mathcal{N}_\ell,\forall j=1,\ldots,m.$$
This in particular implies that the dynamics described by \eqref{RKM} can be expressed as
\begin{equation}
    \d\theta_i(t)=\left[\varpi_i+\frac{K}{N}\sum_{j=1}^Na_{ij}\sin{(\theta_j(t)-\theta_i(t))}\right]\d t+(\Tilde{G}^\ell(\theta)\d \textbf{W}_t)_i ,\qquad\forall i\in\mathcal{N}_\ell,\forall\ell=1\ldots,M.
\end{equation}
This immediately yields the following corollary.
    \begin{corollary}
        Under the hypotheses \nameref{AI}, \nameref{B}, \nameref{W}, \nameref{G} then there exists a constant $\lambda_0$ such that for any $C_{G}<\frac{\lambda_0}{2}\frac{N}{N-1}$ we have that almost surely, for initial data satisfying
    $$\underset{i,j\in\mathcal{N}_\ell}{\max}\|\theta_i(0)-\theta_j(0)\|\leq2\delta\qquad\text{for some $\delta\in[0,\pi/2)$},\;\forall\ell=1,\ldots,M$$
    $$\underset{i\in\mathcal{N}_\ell}{\max}\bigg\|\theta_i(0)-\frac{1}{|\mathcal{N}_\ell|}\sum_{j\in\mathcal{N}_\ell}\theta_j(0)\bigg\|\leq r_\ell(\omega)\qquad\forall\ell=1,\ldots,M$$
    with $r_\ell(\omega)$ the analogous as in Theorem \ref{main_th} for each subsystem, \textcolor{black}{with $\textbf{W}$ satisfying \eqref{moment_est},} it holds 
    $$\lim_{t\rightarrow\infty}\|\theta_i(t,\omega)-\Theta^\ell_\infty(\omega)\|=0,\qquad\text{for every }i\in\mathcal{N}_\ell,\forall\ell=1,\ldots,M$$
    with
    $$\Theta^\ell_\infty(\omega)\coloneqq\frac{1}{|\mathcal{N}_\ell|}\sum_{i\in\mathcal{N}_\ell}\theta_i(0)+\frac{1}{|\mathcal{N}_\ell|}\int_0^\infty\sum_{i\in\mathcal{N}_\ell}\sum_{j=1}^mG_{ij}(\theta(s,\omega))\d\textbf{W}^j_s.$$
    \end{corollary}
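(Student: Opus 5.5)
The plan is to reduce the corollary to $M$ independent applications of Theorem~\ref{main_th} and Theorem~\ref{theor_synch}, one for each connected component $\mathcal{G}_\ell$. First, by \nameref{B} and the change of coordinates of Remark~\ref{rk1}, the natural frequencies are removed. Since $a_{kl}=0$ whenever $k\in\mathcal{N}_\ell$ and $l\notin\mathcal{N}_\ell$, the drift of $\theta_i$ with $i\in\mathcal{N}_\ell$ involves only $\theta_j$ with $j\in\mathcal{N}_\ell$. For each $\ell$ we then introduce the sub-population mean $\Theta^\ell\coloneqq\frac{1}{|\mathcal{N}_\ell|}\sum_{i\in\mathcal{N}_\ell}\theta_i$. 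Repeating the computation \eqref{mean_phase} with the symmetric connected block $\mathcal{A}_\ell$ in place of $\mathcal{A}$, the drift contribution vanishes, and we get $\d\Theta^\ell=\frac{1}{|\mathcal{N}_\ell|}\sum_{i\in\mathcal{N}_\ell}\sum_j G_{ij}(\theta)\d\mathbf{W}^j$. Consequently $\hat\theta^\ell_i\coloneqq\theta_i-\Theta^\ell$ for $i\in\mathcal{N}_\ell$ satisfies a reduced system of the form \eqref{reduced RKM}, with adjacency $\mathcal{A}_\ell$ and noise $\tilde G^\ell$, and has zero block mean.

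Next, we check the hypotheses of Theorem~\ref{main_th} block by block. The block $\mathcal{A}_\ell$ satisfies \nameref{A}. The noise $\tilde G^\ell$ is rotation invariant and satisfies $\sum_{i\in\mathcal{N}_\ell}\tilde G^\ell_{ij}=0$, so Lemma~\ref{lemma1} still applies and the Doss--Sussmann variables $z$ stay in the zero-mean subspace of each block. The one subtlety is that the noise coefficient $\tilde G^\ell(\theta)$ depends on the whole vector $\theta$, not only on the block. To handle this, we view the full vector $(\hat\theta^1,\dots,\hat\theta^M)$ as a single RDE, take as Lyapunov function $V=\sum_\ell\|\hat\theta^\ell\|$ (or the Euclidean norm of the concatenation), and use the direct-sum Laplacian $-\mathcal{A}=\bigoplus_\ell(-\mathcal{A}_\ell)$. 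On the subspace orthogonal to every block indicator $\underline{1}_{\mathcal{N}_\ell}$ this Laplacian satisfies $\theta^\top(-\mathcal{A})\theta\ge\min_\ell\lambda_2^{(\ell)}\|\theta\|^2$. So Step~1 of Theorem~\ref{main_th} goes through with $d=\frac{K C_\delta}{N}\min_\ell\lambda_2^{(\ell)}$. Step~2 is unchanged and yields exponential decay of $\hat\theta^\ell$ with random radii $r_\ell(\omega)$, which are the block analogues of $r(\omega)$. The bound $\|\tilde G^\ell\|\le 2(1-1/|\mathcal{N}_\ell|)\|G\|_\infty\le 2\frac{N-1}{N}C_G$ shows that $C_G<\frac{\lambda_0}{2}\frac{N}{N-1}$ forces $C_{\tilde G^\ell}<\lambda_0$, exactly as in the proof of Theorem~\ref{theor_synch}.

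Finally, we show convergence of each $\Theta^\ell$ using the second half of the proof of Theorem~\ref{theor_synch}. We set $Y_s=G_{ij}(\theta(s))$ for $i\in\mathcal{N}_\ell$. The main obstacle is that rotation invariance \eqref{rot_inv} only allows subtracting a common shift of all coordinates. Here $\theta$ converges to a vector whose value is constant on each block but differs across blocks, so $G(\theta)$ need not decay and the earlier estimate $\|Y_k\|\le C_Ge^{-\mu k}$ fails as stated. I would first try to use that the corollary only requires $\Theta^\ell$ to converge modulo the block structure: write $Y_s=G_{ij}(\theta(s))-G_{ij}(\underline{\Theta}^{\mathrm{blk}}(s))+G_{ij}(\underline{\Theta}^{\mathrm{blk}}(s))$, where $\underline{\Theta}^{\mathrm{blk}}$ has entries $\Theta^\ell$ on block $\ell$. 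The first difference decays exponentially and is handled by the interpolation and moment argument under \eqref{moment_est}, with $\mathbb{E}\mathfrak{X}_k\le D$ and Borel--Cantelli. The leftover term $G(\underline{\Theta}^{\mathrm{blk}})$ vanishes if $G$ is assumed invariant under block-wise shifts, as it is for the examples \eqref{noise} with a block-compatible $\mathcal{B}$. This is the natural reading of the decomposition with $\tilde G^\ell$ and of the implicit assumption that the subsystems decouple. I expect this decoupling and decay step to be the delicate point, and would state the needed block-invariance explicitly if necessary. With it in place, Cauchy convergence of $\mathcal{I}_k$ gives $\Theta^\ell\to\Theta^\ell_\infty$, and combined with $\hat\theta^\ell\to0$ this yields $\theta_i\to\Theta^\ell_\infty$ for $i\in\mathcal{N}_\ell$.
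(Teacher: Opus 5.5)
Your overall strategy, reducing to the blocks and rerunning Theorems~\ref{main_th} and~\ref{theor_synch}, is the paper's; its entire proof is ``apply Theorem~\ref{theor_synch} to each sub-population''. The gap is your claim that treating $(\hat\theta^1,\dots,\hat\theta^M)$ as one RDE handles the dependence of $\tilde G^\ell$ on the whole vector $\theta$. Hypothesis \nameref{G} gives invariance only under a \emph{common} shift of all coordinates. So after writing $\theta=\hat\theta+\underline{\Theta}^{\mathrm{blk}}$, the coefficient $\tilde G^\ell(\theta)$ still depends on the inter-block offsets $\Theta^\ell-\Theta^{\ell'}$. These offsets are not part of your state and are themselves driven by $\mathbf{W}$, so the concatenated reduced system is not closed in $\hat\theta$. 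Worse, at $\hat\theta=0$ its noise coefficient is $G_{ij}(\underline{\Theta}^{\mathrm{blk}})-\frac{1}{|\mathcal{N}_\ell|}\sum_{k\in\mathcal{N}_\ell}G_{kj}(\underline{\Theta}^{\mathrm{blk}})$, which need not vanish. Hence $\hat\theta\equiv0$ is not a solution, and there is nothing for \cite[Theorem 23]{duc2025strong} to stabilize.

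A concrete instance: take $N=4$, $m=1$, blocks $\{1,2\},\{3,4\}$, $a_{12}=a_{34}=1$, and $G(\theta)=(\sigma\sin(\theta_1-\theta_3),0,0,0)^\top$. This $G$ satisfies \nameref{G}, yet
\[
\mathrm{d}(\theta_1-\theta_2)=-\tfrac{2K}{N}\sin(\theta_1-\theta_2)\,\mathrm{d}t+\sigma\sin(\theta_1-\theta_3)\,\mathrm{d}\mathbf{W}_t,\qquad \mathrm{d}\Theta^1=\tfrac{\sigma}{2}\sin(\theta_1-\theta_3)\,\mathrm{d}\mathbf{W}_t,\qquad \mathrm{d}\Theta^2=0 .
\]
At a block-synchronized state the within-block difference therefore receives noise of size $\sigma|\sin(\Theta^1-\Theta^2)|$. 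The block-synchronized set is not invariant unless $\Theta^1-\Theta^2\in\pi\mathbb{Z}$, and that offset is randomly driven with nothing in the hypotheses forcing it there. So the obstruction you only meet at the end (the leftover $G(\underline{\Theta}^{\mathrm{blk}})$) already breaks the exponential-decay step, and ``Step 2 is unchanged'' is false under the stated hypotheses.

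The fix is to make the decoupling a hypothesis from the outset. For example, require $G\big(\theta+\sum_{\ell}a_\ell\underline{1}_{\mathcal{N}_\ell}\big)=G(\theta)$ for all $a_1,\dots,a_M\in\R$, where $\underline{1}_{\mathcal{N}_\ell}$ is the indicator vector of block $\ell$. Then:
\begin{itemize}
\item $\tilde G^\ell(\theta)=\tilde G^\ell(\hat\theta)$ and $\tilde G^\ell(0)=0$, so the concatenated system is autonomous with $\hat\theta\equiv0$ a solution;
\item the argument of Lemma~\ref{lemma1} preserves each block sum of $z$;
\item your direct-sum Laplacian bound $d=\frac{KC_\delta}{N}\min_\ell\lambda_2^{(\ell)}$ goes through as you describe;
\item your splitting $Y_s=G_{ij}(\theta(s))-G_{ij}(\underline{\Theta}^{\mathrm{blk}}(s))$ also goes through, now with $G(\underline{\Theta}^{\mathrm{blk}})=G(0)=0$.
\end{itemize}
This is not an argument the paper supplies and you missed. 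Its one-line proof only makes sense if each sub-population is itself a system of the form \eqref{RKM} satisfying \nameref{G}, with noise depending only on its own phases, which is an even stronger tacit assumption. With the hypothesis stated explicitly, your concatenated version is slightly more general, since it also lets one block's noise depend on the other blocks' $\hat\theta^{\ell'}$. The price is a single joint random radius rather than separate $r_\ell(\omega)$.
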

    \begin{proof}
    It follows directly by applying Theorem \ref{theor_synch} to each sub-population of the initial system \eqref{RKM}.
\end{proof}
\end{subsection}
\begin{subsection}{Splitting for signed graphs}\label{subsec_split}
Thus far, we have considered only particles coupled via positive weights. This models the situation in which all particles agree on a single decision, leading naturally to complete synchronization. Once negative weights $a_{ij}$ are allowed, however, the behavior changes significantly, resulting in mixed regimes where the evolution of the particles is difficult to predict. The sole exception we have identified is when the weights are structured so as to produce a complete splitting of the phases: some particles' phases $\theta_i(t)$ converge to $\theta^\star$, while the remaining converge to $\theta^\star + \pi$.\\

It turns out that the graphs for which this splitting occurs are precisely balanced graphs. First defined in \cite{harary1953notion}, a graph $\mathcal{G}$ is balanced precisely when its vertex set $V$ can be partitioned into two disjoint subsets $E_1$ and $E_2$ such that each positive edge connects vertices within the same subset, and each negative edge connects vertices from different subsets. In the sequel, we let $\mathcal{N}_1$ and $\mathcal{N}_2$ denote the index sets corresponding to $E_1$ and $E_2$, respectively. We now impose the following condition on the adjacency matrix:
\begin{description}
    \item[(\textbf{AII})\label{AII}] We suppose the $\mathcal{A}$ is symmetric and that its associated graph $\mathcal{G}$ is connected and balanced.
\end{description}
Or, following the approach used earlier in order to extend the analysis to the case of a more general disconnected graph:
\begin{description}
    \item[(\textbf{AIII})\label{AIII}] We suppose the $\mathcal{A}$ is symmetric and that its associated graph $\mathcal{G}$ is balanced.
\end{description}
By performing a change of variables we can use again the result developed previously. Namely we define for all $i=1,\ldots,N$
$$\theta_i\longmapsto\varphi_i=\begin{cases}
    \theta_i+\pi\quad&\text{if }i\in\mathcal{N}_1\\
    \theta_i\quad&\text{if }i\in\mathcal{N}_2
\end{cases}.$$
After this change of coordinates it is clear that the variables $\varphi_i$ are governed by 
\begin{equation}
    \d\varphi_i(t)=\left[\varpi_i+\frac{K}{N}\sum_{j=1}^Nc_{ij}\sin{(\varphi_j(t)-\varphi_i(t))}\right]\d t+(G(\varphi)\d \textbf{W}_t)_i ,\qquad\forall i=1,...,N
\end{equation}
where the adjacency matrix $\mathcal{C}$ is symmetric, represent a connected graph and $(\mathcal{C})_{ij}=c_{ij}\geq0$ and $|c_{ij}|=|a_{ij}|$. So we quickly derive another corollary of our main theorem.
\begin{corollary}
        Under the hypotheses \nameref{AII}, \nameref{B}, \nameref{W}, \nameref{G} then there exists a constant $\lambda_0$ such that for any $C_{G}<\frac{\lambda_0}{2}\frac{N}{N-1}$ we have that almost surely, for initial data satisfying \eqref{hyp_cor_1} and \eqref{hyp_cor_2} for the variables $\varphi$, \textcolor{black}{with $\textbf{W}$ satisfying \eqref{moment_est},} it holds 
        $$\lim_{t\rightarrow+\infty}\theta_i(t,\omega)=\begin{cases}
    \Theta_\infty(\omega)-\pi\quad&\text{if }i\in\mathcal{N}_1\\
    \Theta_\infty(\omega)\quad&\text{if }i\in\mathcal{N}_2
\end{cases}\qquad\forall i=1,\ldots,N;$$
with $\Theta_\infty(\omega)$ defined as in Theorem \ref{theor_synch}. Analogous result holds under the hypotheses \nameref{AIII}, \nameref{B}, \nameref{W}, \nameref{G}, with the adequate adjustments given by each sub-population.
\end{corollary}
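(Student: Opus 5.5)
The plan is to reduce the corollary to Theorem \ref{theor_synch} through the gauge change $\theta\mapsto\varphi$ introduced just above, then translate the conclusion back. First, check that under \nameref{AII} the transformed system has the required form. Write $s_i=1$ if $i\in\mathcal{N}_1$ and $s_i=0$ otherwise, so that $\varphi_i=\theta_i+\pi s_i$. Then
\[
\sin(\theta_j-\theta_i)=\sin\bigl(\varphi_j-\varphi_i-\pi(s_j-s_i)\bigr)=(-1)^{s_j-s_i}\sin(\varphi_j-\varphi_i).
\]
Setting $c_{ij}\coloneqq(-1)^{s_i+s_j}a_{ij}$ gives the drift in the $\varphi$ variables. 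Balancedness says exactly that positive edges lie inside a block (so $s_i=s_j$) and negative edges cross blocks (so $s_i\neq s_j$). Hence $c_{ij}=|a_{ij}|\ge0$. The matrix $\mathcal{C}$ is symmetric and has the same edge set as $\mathcal{A}$, so it is connected, and \nameref{A} holds for $\mathcal{C}$.

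Next, handle the noise term. It must be written as $G(\varphi-\pi s)$, which is not literally $G(\varphi)$ unless $G$ is $\pi$-periodic in the relevant directions. The cleanest route is to define $G^\pi(\varphi)\coloneqq G(\varphi-\pi s)$. This function lies in $C^3_b$ with the same constant $C_{G^\pi}=C_G$, and it inherits rotational invariance \eqref{rot_inv}, because translation by $a\underline{1}$ commutes with the shift by $\pi s$.

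The condition $G(0)=0$ needs more care. In the proof of Theorem \ref{theor_synch} it is used only in the form $G(\underline{\Theta})=0$, meaning the noise vanishes on the synchronized diagonal. For the $\varphi$ system the corresponding requirement is $G^\pi(\underline{\Theta})=0$, i.e. $G$ vanishes at the split configuration $\underline{\Theta}-\pi s$. I would read the corollary's assumption that \nameref{G} holds ``for the variables $\varphi$'' as stating precisely this. For the model class \eqref{noise} it holds automatically, since $\sin(\theta_i-\theta_k)=0$ whenever $\theta_i-\theta_k\in\{0,\pm\pi\}$. This verification, that the noise hypotheses transfer to the transformed system, is the main obstacle, and I will state it explicitly.

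With this in place, apply Theorem \ref{theor_synch} to $\varphi$. Initial data satisfying \eqref{hyp_cor_1} and \eqref{hyp_cor_2} in $\varphi$, together with \eqref{moment_est} and $C_G<\frac{\lambda_0}{2}\frac{N}{N-1}$, give $\varphi_i(t)-\Phi(t)\to0$ and $\Phi(t)\to\Phi_\infty$ almost surely, where $\Phi$ is the mean phase of $\varphi$. So $\varphi_i(t)\to\Phi_\infty$ for every $i$. Undoing the change of variables gives $\theta_i\to\Phi_\infty-\pi$ for $i\in\mathcal{N}_1$ and $\theta_i\to\Phi_\infty$ for $i\in\mathcal{N}_2$. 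Here $\Phi_\infty$ is the quantity $\Theta_\infty$ of Theorem \ref{theor_synch}, computed for the $\varphi$ system; I identify it with the $\Theta_\infty$ of the statement in that sense. The identity $G^\pi(\varphi)=G(\theta)$ makes the stochastic integral in $\Theta_\infty$ literally the same.

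Finally, under \nameref{AIII} the graph decomposes into connected balanced components. Applying the same gauge change on each component, and invoking the previous corollary for disconnected graphs with the block-wise $\tilde G^\ell$, gives the statement population by population.
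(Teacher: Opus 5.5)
Your proposal is correct and follows the paper's route. The gauge change $\theta\mapsto\varphi$ converts the balanced graph into the symmetric, connected, nonnegative $\mathcal{C}$ with $c_{ij}=|a_{ij}|$; Theorem \ref{theor_synch} is then applied to $\varphi$, and the shift by $\pi$ on $\mathcal{N}_1$ is undone. You are in fact more careful than the paper, which writes the transformed noise simply as $G(\varphi)$. Your coefficient $G^\pi(\varphi)=G(\varphi-\pi s)$ is the correct one, and you add a requirement that the corollary as stated does not contain. That requirement is that $G$ vanish at the split configurations (automatic for the class \eqref{noise}), and it is genuinely needed, because otherwise the split states are not invariant under the noise and the stated convergence cannot hold.
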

Once the notion of a balanced graph has been introduced, a natural question arises: how common are such graphs? \textcolor{black}{From a physical perspective they have been employed to describe phenomena arising in social psychology; see, for instance, see \cite{harary1953graph,aref2019balance}.} Unfortunately, \textcolor{black}{in mathematics} balanced graphs are relatively rare compared to the vast class of general signed graphs. An exact, though implicit, enumeration was provided in \cite{harary1981counting}. A more powerful result in this direction is given in \cite[Theorem 2.2-5.1]{el2012balance} and \cite[Theorem 2.3]{el2015weak}. \textcolor{black}{There, in contrast with our model,} random signed graphs denoted by $\mathcal{G}_{N,p_1,q_1}$ are studied. In \textcolor{black}{that} model, for each $i,j=1,\ldots,N$, the weight $a_{ij}$ is positive with probability $p_1$, negative with probability $q_1$, and zero with probability $1-p_1-q_1$. For fixed $p_1,q_1$ satisfying $p_1+q_1\in(0,1)$, it holds that
$$\lim_{N\rightarrow+\infty}\mathbb{P}\left(\mathcal{G}_{N,p_1,q_1} \text{ is balanced}\right)=0.$$
Moreover if the parameters $p_1,q_1$ depend on $N$ it has been shown that there is a critical threshold that guarantees balancedness. Namely if $p_1(N)+q_1(N)=o(N)$ then 
$$\lim_{N\rightarrow+\infty}\mathbb{P}\left(\mathcal{G}_{N,p_1(N),q_1(N)} \text{ is balanced}\right)=1,$$
but as soon as $p_1(N)+q_1(N)\geq(1+\varepsilon)/N$ and $q_1(N)\geq\delta/N$ then
$$\lim_{N\rightarrow+\infty}\mathbb{P}\left(\mathcal{G}_{N,p_1(N),q_1(N)} \text{ is balanced}\right)=0.$$
This reinforces the intuition that the more connections there are in a graph the more difficult it is to have any kind of predictable regime.
\end{subsection}
\begin{subsection}{Local Synchronization of the frequencies}\label{syn_fre}
In this section, we drop hypothesis \nameref{B} while keeping \nameref{A}, \nameref{W}, and \nameref{G}. The price we pay for this more general setting is that we can no longer prove synchronization of the phases $\theta_i(t)$; instead, we only establish synchronization of their frequencies, usually defined as $\varpi_i(t) \coloneqq \dot{\theta}_i(t)$. However, in our framework this definition is not well posed, since the trajectories $\theta_i(t)$ are not classically differentiable. 

In order to overcome this issue, one should interpret the frequencies in a weaker sense, namely in the sense of distributions. Yet this approach also presents a significant obstacle: to the best of our knowledge, there is no method to rigorously derive an RDE for the distributional frequencies $\varpi_i(t)$ starting from $\eqref{RKM}$ for the phases $\theta_i(t)$. For this reason, we instead define the frequencies $\varpi_i(t)$ as the solutions of
    \begin{equation}\label{RKM_fr}
    \begin{cases}
        \displaystyle
    \d\varpi_i(t)=\left[\frac{K}{N}\sum_{j=1}^Na_{ij}\cos{(\theta_j(t)-\theta_i(t))}(\varpi_j(t)-\varpi_i(t))\right]\d t+\left(G(\varpi)\d \textbf{W}_t\right)_i ,\qquad\forall i=1,...,N\\
    \displaystyle
        \varpi_i(0)=\varpi_i
    \end{cases}
    \end{equation}
    and we couple this system with \eqref{RKM}. In Section \ref{Num_sec}, we discuss the validation of this artificial model and identify when it effectively captures the dynamics of the true distributional frequencies. Heuristically, such a system is justified when viewed as a perturbed version of the deterministic frequency system, namely the case $G \equiv 0$. We now proceed by emulating the techniques used in the previous section. In particular, we define 
        $$\overline{\Omega}(t)\coloneqq\frac{1}{N}\sum_{i=1}^N\varpi_i(t)$$
    so that we again have as in \eqref{mean_phase} that
    \begin{equation}\label{mean_frequency}
    \d\overline{\Omega}(t)=\frac{1}{N}\sum_{i=1}^N\sum_{j=1}^m G_{ij}(\varpi(t))\d\textbf{W}^j_t.
    \end{equation}
    We then continue by performing the usual change of variables
    $$\hat{\varpi}(t)\coloneqq\varpi(t)-\overline{\Omega}(t)=(\varpi_1-\overline{\Omega},\ldots,\varpi_N-\overline{\Omega})(t)$$ so that these new variables satisfy the following system
    \begin{equation}\label{RKMfreq}
\begin{cases}
\displaystyle
    \d\hat{\varpi}_i(t)=\left[\frac{K}{N}\sum_{j=1}^Na_{ij}\cos{(\theta_j(t)-\theta_i(t))}(\hat{\varpi}_j(t)-\hat{\varpi}_i(t))\right]\d t+\sum_{j=1}^m \Tilde{G}_{ij}(\hat{\varpi}(t))\d \textbf{W}^j_t ,\qquad\forall i\\
    \displaystyle
    \sum_{i=1}^N\hat{\varpi}_i=0
\end{cases}
\end{equation}
We are now in a position to state and prove a version of Theorem \ref{main_th} for the frequencies.
 \begin{theorem}
    Under the hypotheses \nameref{A}, \nameref{W}, \nameref{G} then there exists a constant $\lambda_0$ such that for any $C_{\Tilde{G}}<\lambda_0$ we have that $\hat{\varpi}^\star=0$ is almost surely exponentially attracting in the sense of {Definition} \ref{defStab} on $\R$ whenever
    \begin{equation}\label{hyp_dist}
        \Delta\coloneqq\sup_{t\in\R}\max_{i,j}|\theta_i(t)-\theta_j(t)|<\frac{\pi}{2}.
    \end{equation}
     The rate of convergence and the basin of attraction can be derived as in Theorem \ref{main_th} and the asymptotic value of $\varpi_i(t)$ is given as in Theorem \ref{theor_synch}.
\end{theorem}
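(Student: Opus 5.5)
We mirror the proof of Theorem \ref{main_th}, treating \eqref{RKMfreq} as a rough differential equation in $\hat\varpi$ with a \emph{non-autonomous} linear drift
$$f_i(t,\hat\varpi)=\frac{K}{N}\sum_{j=1}^N a_{ij}\cos(\theta_j(t)-\theta_i(t))(\hat\varpi_j-\hat\varpi_i),$$
where the phases $\theta(t)$ are given and come from the coupled system \eqref{RKM}. Set $c_{ij}(t)\coloneqq a_{ij}\cos(\theta_j(t)-\theta_i(t))$. Under \eqref{hyp_dist} we have
$$a_{ij}\cos\Delta\le c_{ij}(t)\le a_{ij}\qquad\text{for all } t,$$
and $\cos\Delta>0$. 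Hence $\mathcal{C}(t)=(c_{ij}(t))$ is symmetric and nonnegative, and it has the same support as $\mathcal{A}$, so it is connected.

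\textbf{Step 1: invariance of the zero-mean subspace.} The drift sums to zero over $i$ by the symmetry of $\mathcal{C}(t)$, and $\tilde G$ also sums to zero over $i$. The argument of Lemma \ref{lemma1} therefore applies verbatim: the Doss--Sussmann transform $z$ of $\hat\varpi$ also satisfies $\sum_i z_i=0$. The transformation only involves the diffusion $\tilde G$, which is autonomous, so time-dependence of the drift does not affect it.

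\textbf{Step 2: Lyapunov condition, uniform in time.} Take again $V(x)=\|x\|$, now on all of $\R^N$, since the drift is linear and no $\sin$ linearisation is needed. By the same symmetrisation as in Step 1 of Theorem \ref{main_th},
$$\langle\nabla V(x),f(t,x)\rangle=-\frac{K}{2N\|x\|}\sum_{i,j}c_{ij}(t)(x_j-x_i)^2\le-\frac{K\cos\Delta}{2N\|x\|}\sum_{i,j}a_{ij}(x_j-x_i)^2=-\frac{K\cos\Delta}{N\|x\|}\,x^\top(-\mathcal{A})x .$$
For $x\perp\underline{1}$ the Courant--Fischer characterisation gives $x^\top(-\mathcal{A})x\ge\lambda_2\|x\|^2$. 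So \eqref{Lyapunov_cond} holds with $d=\frac{K\cos\Delta\,\lambda_2}{N}$, uniformly in $t$. The drift is also globally Lipschitz in $x$, uniformly in $t$, with $L_f\le\frac{2K}{N}\max_i\sum_j|a_{ij}|$. Conditions \eqref{Lya1}--\eqref{Lya2} hold with $L_V=\alpha=\beta=1$.

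\textbf{Step 3: the stability theorem and the rate.} Invoke \cite[Theorem 23]{duc2025strong}, then repeat Step 2 of Theorem \ref{main_th}. On each greedy interval, the one-step estimate
$$V(y_t)\le \exp\{-\eta t+L_V\lambda/\alpha\}V(y_0)$$
holds, with $\lambda$ small relative to $d/L_f$. Concatenating intervals, applying Birkhoff's ergodic theorem to $\overline{\nN}$, and choosing $C_{\tilde G}<\lambda_0$ yields $\|\hat\varpi_t\|\le r^\star(\omega)e^{-\mu t}$. The admissible $\mu$ is given by \eqref{rate_bound} with $C_\delta$ replaced by $\cos\Delta$. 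Because $V$ and the drift are global, there is no restriction to $I_\delta$, and the basin is only limited by $r(\omega)$ coming from the ergodic argument. This gives exponential attraction on $\R$. The asymptotic value of $\overline{\Omega}$, via \eqref{mean_frequency}, follows exactly as in Theorem \ref{theor_synch}, using $G(\underline{\overline\Omega})=0$ by rotational invariance together with $G(0)=0$, and the Cauchy-sequence argument under \eqref{moment_est}.

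\textbf{Main obstacle.} The delicate point is that \cite[Theorem 23]{duc2025strong} is stated for autonomous drifts, while here $f$ depends on $t$ through the rough phases $\theta(t)$. I would check that its proof uses only two facts: the Lyapunov inequality and the Lipschitz bound on $f$, both holding uniformly in $t$. Then the Doss--Sussmann ODE
$$\dot z=(\mathrm{Id}+\psi_t)f(t,z+\eta_t)$$
is still a well-posed Carathéodory ODE, since $f$ is continuous in $t$. The greedy-time estimates are unchanged because they concern only $\tilde G$ and $\mathbf{W}$, so the comparison argument goes through.

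If necessary, I would instead treat $(\theta,\varpi)$ jointly as an autonomous RDE, projecting only onto the $\hat\varpi$-component. Hypothesis \eqref{hyp_dist} is exactly what makes the lower bound $c_{ij}\ge a_{ij}\cos\Delta$ uniform in time.
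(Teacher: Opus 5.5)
Your proposal is correct and follows the paper's proof essentially step for step. It uses the same Lyapunov function $V(x)=\|x\|$ and the same symmetrization of the drift, then the lower bound $\cos(\theta_j-\theta_i)\geq\cos\Delta$ and the spectral gap $\lambda_2$ of the Laplacian on $\underline{1}^{\perp}$, giving $d=K\cos(\Delta)\lambda_2/N$, before invoking the machinery of Theorem~\ref{main_th}. You also check explicitly that the Doss--Sussmann transform preserves the zero-mean subspace and that the time-dependence of the drift through $\theta(t)$ does not obstruct \cite[Theorem 23]{duc2025strong}; the paper leaves both points implicit, so these checks strengthen the argument rather than change it.
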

\begin{proof}
We follow the main steps of the proof of Theorem \ref{main_th}. So we consider again
$$V(\varpi)=\|\varpi\|=\sqrt{\sum_{i=1}^N\varpi_i^2}.$$
Of course we have that $V\in C^1(\R\setminus\{0\};\R)\cap C^0(\R;\R)$ and satisfies trivially \eqref{Lya1} and \eqref{Lya2}, with the same constants $L_V=\alpha=\beta=1$. We now find $d>0$ such that \eqref{Lyapunov_cond} holds with $f(\varpi)=(f_1,\cdots,f_N)(\varpi)$ defined as
$$f_i(\varpi)=\frac{K}{N}\sum_{j=1}^Na_{ij}\cos{(\theta_j-\theta}_i)(\varpi_j-\varpi_i).$$
So we now have that 
\begin{align*}
        &\langle\nabla V(\varpi),f(\varpi)\rangle=-\frac{1}{\|\varpi\|}\frac{K}{2N}\sum_{i,j=1}^Na_{ij}\cos{(\theta_j-\theta_i)}(\varpi_j-\varpi_i)^2\\
        &\leq-\frac{1}{\|\varpi\|}\frac{K\Tilde{C}_\delta}{2N}\sum_{\substack{i,j=1}}^Na_{ij}(\varpi_j-\varpi_i)^2=-\frac{1}{\|\varpi\|}\frac{K\Tilde{C}_\delta}{N}\sum_{\substack{i,j=1}}^N-a_{ij}\varpi_j\varpi_i=-\frac{1}{\|\varpi\|}\frac{K\Tilde{C}_\delta}{N}\varpi^T(-\mathcal{A})\varpi
\end{align*}
with $\Tilde{C}=\cos{(\Delta)}$. By using once again the connectivity of $\mathcal{A}$ we can conclude that 
$$\langle\nabla V(\varpi),f(\varpi)\rangle\leq-d\|\varpi\|$$
with $d=\frac{K\Tilde{C}_\delta\lambda_2}{N}$.
\end{proof}
\end{subsection}
We notice that hypothesis \eqref{hyp_dist} is clearly satisfied if we assume \nameref{B} on system \eqref{RKM}. Furthermore, in Section \ref{Num_sec} we show with simulations how our proposed model \eqref{RKM_fr} is capable of capturing the long time behavior of the real distributional frequencies $\dot{\theta}_i$.

\textcolor{black}{At first glance, this result appears to be independent of the maximal pairwise distance between the initial natural frequencies $\Tilde{\Delta}\coloneqq \max_{i,j}|\varpi_i-\varpi_j|.$ This is indeed the case for the constant $\lambda_0$, which, as can be seen from the proof of the theorem, is independent of $\widetilde{\Delta}$. On the other hand, $\Tilde{\Delta}$ influences the validity of assumption \eqref{hyp_dist}. Indeed, numerical simulations suggest that, for fixed coupling strength $K$ and noise intensity, condition \eqref{hyp_dist} eventually fails to hold as $\Tilde{\Delta}$ increases.} 
\begin{subsection}{Deterministic Synchronization for particular noise}\label{subsec_det}
In this section we are now interested in showing a particular behavior of the system, assuming that the noise function $G$ has a particular structure. In particular the additional hypothesis that we need is 
\begin{description}
    \item[(\textbf{H}\textsubscript{\textbf{G}}+)\label{G+}] The components of $G(\theta)$ satisfy
   $$\sum_{i=1}^NG_{ij}(\theta)=0\qquad\forall j=1,\ldots,m.$$
\end{description}
A class of functions that satisfies such hypothesis is the one considered already in \eqref{noise}, with $\tilde{n}$ odd. Under this hypothesis we see immediately from \eqref{mean_phase} that
$$\d\Theta(t)=0,$$
so that it becomes a first integral of the system, i.e. a conserved quantity along the trajectories of the solutions. In particular, the results concerning the asymptotic behavior of the solutions established in the previous sections become more precise, and the limit is now a fully determined constant. Indeed, we have
$$\Theta_\infty(\omega) \equiv \Theta_\infty = \frac{1}{N}\sum_{i=1}^N \theta_i(0).$$
In particular, we are able to characterize the associated rough stable manifold, as stated in the following corollary.
\begin{corollary}\label{Cor_man}
    Under the hypotheses \nameref{A}, \nameref{B}, \nameref{W}, \nameref{G}, \nameref{G+} and $DG(0)=D^2G(0)=0$ we have that the local rough stable invariant manifolds of the coherent equilibria can be characterized almost surely as
    $$\left\{\underline{\theta}\in\mathcal{U}\;\Big|\;\theta^\star=\frac{1}{N}\sum_{i=1}^N\theta_i;\;\max_{i,j}|\theta_i-\theta_j|<\pi\right\}\subseteq\Tilde{W}^s_{loc}\left(\omega,\underline{\theta}^\star\right)\subseteq\left\{\underline{\theta}\in\mathbb{T}^N\;\Big|\;\theta^\star=\frac{1}{N}\sum_{i=1}^N\theta_i\right\},$$
    where $\theta^\star\in[-\pi,\pi]$ and $\mathcal{U}(\omega)$ the random neighborhood of $\underline{\theta}^\star\coloneqq(\theta^\star,\cdots,\theta^\star)$ from Theorem \ref{main_th}.
\end{corollary}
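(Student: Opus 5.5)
The plan is to prove the two inclusions separately, using the conservation of the mean phase for the upper bound and Theorem \ref{main_th} together with Theorem \ref{theor_synch} for the lower bound.

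\emph{Upper inclusion.} Under \nameref{G+}, equation \eqref{mean_phase} gives $\d\Theta(t)=0$, so $\Theta(t)=\Theta(0)$ for all $t\geq0$, pathwise for every realization of $\mathbf{W}$. This holds because the rough integral of a vector whose components sum to zero is the zero rough integral: the controlled path $\sum_i G_{ij}(\theta)$ vanishes identically, and so does its Gubinelli derivative. Now suppose $\underline{\theta}\in\Tilde{W}^s_{loc}(\omega,\underline{\theta}^\star)$, meaning $\theta(t)\to\underline{\theta}^\star$ (modulo $2\pi$, staying in the local chart). Averaging the components gives $\Theta(t)\to\theta^\star$. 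Since $\Theta$ is constant, $\frac1N\sum_i\theta_i(0)=\theta^\star$, which is the upper inclusion. The only care needed is the interpretation on $\mathbb{T}^N$ versus $[-\pi,\pi]^N$. Because we work in the local chart around $\underline{\theta}^\star$, where no wrapping occurs, the mean is well defined there.

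\emph{Lower inclusion.} Take $\underline{\theta}\in\mathcal{U}(\omega)$ with $\frac1N\sum_i\theta_i=\theta^\star$ and $\max_{i,j}|\theta_i-\theta_j|<\pi$. The steps are:
\begin{enumerate}
\item Write $\max_{i,j}|\theta_i-\theta_j|\leq 2\delta$ for some $\delta<\pi/2$, so that \eqref{hyp_cor_1} holds.
\item Use membership in $\mathcal{U}(\omega)$ to get \eqref{hyp_cor_2}, i.e.\ $\max_i|\theta_i-\Theta(0)|\leq r(\omega)$.
\item Apply Theorem \ref{main_th} in the reduced coordinates \eqref{reduced RKM}. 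This gives $\hat{\theta}(t)\to0$ exponentially, hence $\theta_i(t)-\Theta(t)\to0$.
\item Since $\Theta(t)\equiv\Theta(0)=\theta^\star$, conclude $\theta(t)\to\underline{\theta}^\star$, with the trajectory staying in the neighborhood by the stability part of Definition \ref{defStab}. So $\underline{\theta}$ lies in the local stable set.
\end{enumerate}
Theorem \ref{theor_synch} is not needed in its second half: the limit $\Theta_\infty$ is trivially $\theta^\star$, so the moment condition \eqref{moment_est} is superfluous here. The translation $\theta^\star$ is handled by rotational invariance \eqref{rot_inv}, which lets us reduce to $\theta^\star=0$.

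\emph{Main obstacle.} The step I expect to need the most care is identifying the set obtained above with the \emph{rough stable manifold} $\Tilde{W}^s_{loc}$ in the sense of the literature. That object is defined via the linearization at the equilibrium and a random dynamical system, and it requires $DG(0)=D^2G(0)=0$ so that $\underline{\theta}^\star$ is a genuine stationary point of the cocycle with linear part given by the deterministic Laplacian.

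The approach here would be:
\begin{enumerate}
\item Check that $G(\underline{\theta}^\star)=G(0)=0$, by rotational invariance, so that $\underline{\theta}^\star$ is a fixed point of the random flow.
\item Under the assumptions $DG(0)=D^2G(0)=0$, invoke the existence of local stable manifolds for RDEs, characterized as the set of initial points whose trajectories converge to the equilibrium exponentially with rate bounded below by a gap. This characterization is the ingredient taken from the literature rather than from this paper.
\item Check the spectral picture. The linearization $\frac KN\mathcal{A}$ has eigenvalue $0$ in the direction $\underline{1}$ and eigenvalues $-\frac KN\lambda_k$, $k\geq2$, in the complementary directions. So the stable directions are exactly $\underline{1}^\perp$, matching the hyperplane found above, and the direction $\underline{1}$ is neutral: it corresponds to the family of coherent equilibria, which is consistent with $\Theta$ being conserved.
\item Use the exponential rate $\mu$ from Theorem \ref{main_th} to verify that trajectories started from the lower set satisfy the exponential-convergence characterization, hence lie in $\Tilde{W}^s_{loc}$.
\end{enumerate}
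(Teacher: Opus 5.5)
Your proposal is correct and follows essentially the same route as the paper. Existence of $\Tilde{W}^s_{loc}$ is taken from \cite{KuehnNeamtu21} after checking that $\frac{K}{N}\mathcal{A}$ has a zero eigenvalue along $\underline{1}$ and negative spectrum on $\underline{1}^\perp$; the upper inclusion comes from conservation of $\Theta$ under \nameref{G+}; and the lower inclusion comes from Theorem \ref{main_th} together with $\Theta(t)\equiv\theta^\star$. One difference: the paper characterizes $\Tilde{W}^s_{loc}$ simply as the set of initial data converging exponentially to $\underline{\theta}^\star$, with no prescribed rate, so your last step does not require comparing the rate $\mu$ of Theorem \ref{main_th} with a spectral-gap threshold.
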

\begin{proof}
    The existence of $\Tilde{W}^s_{loc}$ can be deduced from \cite{KuehnNeamtu21}, where they obtain the existence of a rough center manifold as graph of a Lipschitz function $h$. The same techniques can be used for existence of stable/unstable rough manifolds. Linearizing around $\theta^\star=0$ (up to translations), we obtain the linear operator
    $$\mathcal{L}\theta=\frac{K}{N}\mathcal{A}\theta$$
    and since $-\mathcal{A}$ is a Laplacian matrix we know from \cite[Lemma 1.7]{chung1997spectral} that $\mathcal{L}$ has $\lambda=0$ as an eigenvalue with multiplicity equal to the number of connected components and all the other eigenvalues are negative. This ensure the validity of \cite[Assumption 4.1]{KuehnNeamtu21}, and so the existence of $\Tilde{W}^s_{loc}$. This set can be characterized, by definition, as
    $$\Tilde{W}^s_{loc}\left(\omega,\underline{\theta}^\star\right)=\left\{\theta_0\in\mathcal{V}\subseteq\T^N\;\bigg\vert\;\theta_t(\omega,\theta_0)\underset{t\rightarrow\infty}{\textcolor{black}{\longrightarrow}}x^\star\quad\text{exponentially}\right\}.$$
    The inclusion of the $\Tilde{W}^s_{loc}\left(\omega,\underline{\theta}^\star\right)$ in the RHS is trivial, indeed we just use that the phase mean is a constant of the system so that if, given initial conditions $\theta(0)$ such that their evolution $\theta(t)$ convergence towards the coherent state $\underline{\theta}^\star$ then we have
    $$\frac{1}{N}\sum_{i=1}^N\theta_i(0)=\lim_{n\rightarrow+\infty}\frac{1}{N}\sum_{i=1}^N\theta_i(t)=\frac{1}{N}\sum_{i=1}^N\theta^\star=\theta^\star.$$
    The other inclusion is straightforward as well using Theorem \ref{main_th}, we conclude the proof.
\end{proof}
\begin{rk}
    We conjecture that the assumption $\D G(0)=\D^2G(0)=0$ made in \cite{KuehnNeamtu21} can be removed  entailing the existence of local  manifolds in this case as well. This assumption is not required in \cite{varzaneh2023invariant} which impose the existence of a negative Lyapunov exponent for the existence of local stable manifolds. The techniques in \cite{varzaneh2023invariant} rely on the multiplicative ergodic theorem and do not immediately entail the graph structure of the local manifold as in \cite{KuehnNeamtu21}. 
\end{rk}
Something remarkable about this corollary is that we see that \textit{independently from the noise} the stable invariant manifold is contained in a deterministic hyperplane as can be visualized in Figure \ref{fig_0}. Of course as soon as the algebraic condition \nameref{G+} on the function $G$ is violated, as in the more general setting as before, such property is lost.
\begin{figure}[H]
  \centering
  \begin{minipage}{0.45\textwidth}
    \centering
    \begin{overpic}[width=\linewidth]{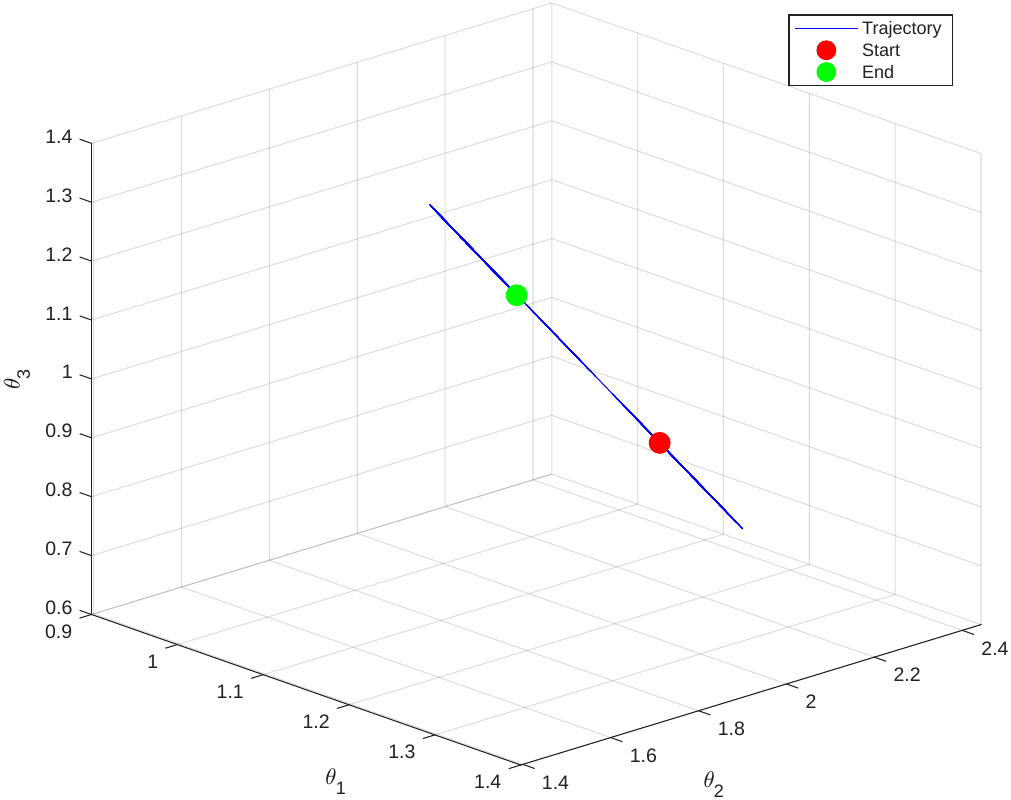}
     
     \put(67,0){\color{white}\rule{0.3cm}{0.4cm}}
     \put(76,5){\scalebox{0.7}{$\theta_2$}}
     
     \put(0,40){\color{white}\rule{0.3cm}{0.4cm}}
     \put(0,41){\scalebox{0.7}{$\theta_3$}}
     
     \put(30,0){\color{white}\rule{0.3cm}{0.4cm}}
     \put(23,5){\scalebox{0.7}{$\theta_1$}}
     
     \put(85,71){\color{white}\rule{0.58cm}{0.4cm}}
     \put(85,75.59){\scalebox{0.35}{Trajectory}}
     \put(85,73.55){\scalebox{0.35}{Start}}
     \put(85,71.55){\scalebox{0.35}{End}}
     \end{overpic}
  \end{minipage}\hfill
  \begin{minipage}{0.45\textwidth}
    \centering
    \begin{overpic}[width=\linewidth]{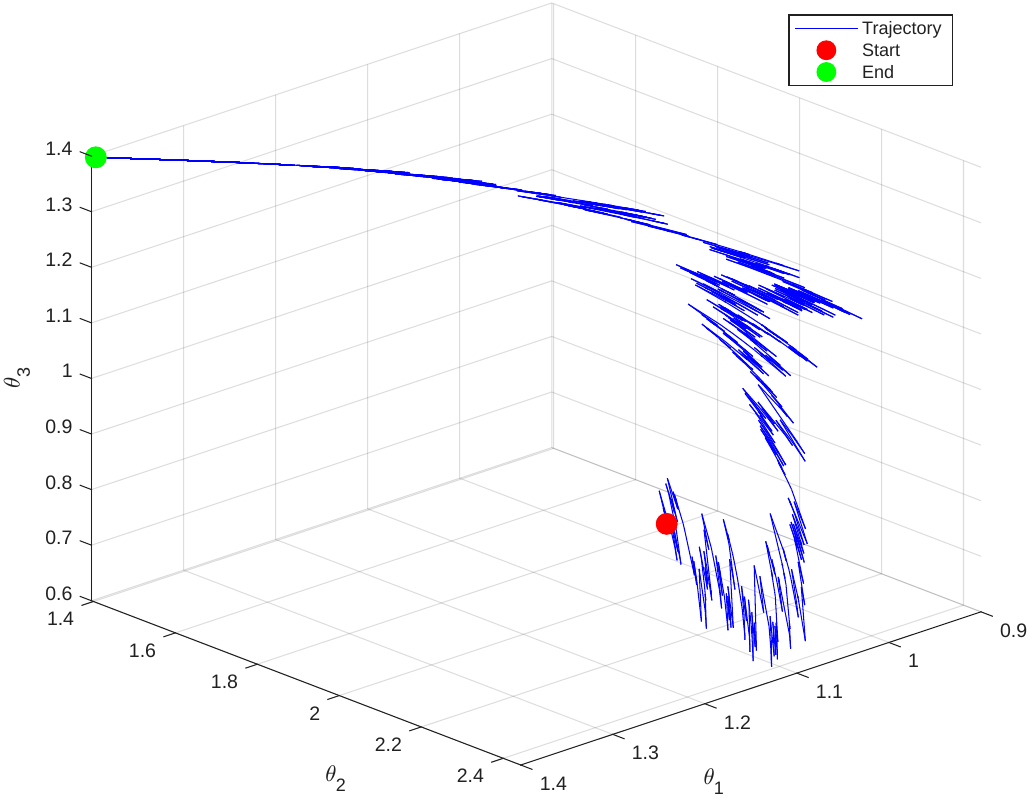}
     
     \put(67,0){\color{white}\rule{0.3cm}{0.4cm}}
     \put(76,5){\scalebox{0.7}{$\theta_1$}}
     
     \put(0,39.5){\color{white}\rule{0.4cm}{0.4cm}}
     \put(0,40){\scalebox{0.7}{$\theta_3$}}
     
     \put(30,0){\color{white}\rule{0.3cm}{0.4cm}}
     \put(24,6){\scalebox{0.7}{$\theta_2$}}
     
     \put(84.0,70){\color{white}\rule{0.54cm}{0.38cm}}
     \put(83.5,74.21){\scalebox{0.35}{Trajectory}}
     \put(83.5,72.17){\scalebox{0.35}{Start}}
     \put(83.5,70.17){\scalebox{0.35}{End}}
     \end{overpic}
  \end{minipage}
  \caption{Simulation of the trajectory $\theta(t) = (\theta_1(t), \theta_2(t), \theta_3(t))$ for the all-to-all coupling case, with initial data satisfying Corollary \ref{Cor_man}, shown from two different perspectives.}
  \label{fig_0}
\end{figure}
Moreover, in the case of different sub-populations, we assume the analogous
\begin{description}
    \item[(\textbf{H}\textsubscript{\textbf{G}}I+)\label{GI+}] The components of $G(\theta)$ satisfy
   $$\sum_{i\in\mathcal{N}_\ell}^NG_{ij}(\theta)=0\qquad\forall j=1,\ldots,m,\;\forall\ell=1,\ldots,M.$$
\end{description}
This hypothesis is satisfied with functions as in \eqref{noise}, with $\mathcal{B}$ that can be decomposed in the same connected components of $\mathcal{A}$ and $\tilde{n}$ odd. In particular we have for free the next general corollary
\begin{corollary}
    Under the hypotheses \nameref{AI}, \nameref{B}, \nameref{W}, \nameref{G}, \nameref{GI+} we have that the local rough stable invariant manifolds of the generalized coherent equilibria can be characterized as
    $$\left\{\underline{\theta}\in\mathcal{U}\;\Big|\;\theta^\star_\ell=\frac{1}{\vert\mathcal{N}_\ell\vert}\sum_{i\in\mathcal{N}_\ell}\theta_i;\;\max_{i,j\in\mathcal{N}_\ell}|\theta_i-\theta_j|<\pi\;\;\forall\ell=1,\ldots m\right\}\subseteq\Tilde{W}^s_{loc}\left(\omega,\underline{\theta}^\star\right),$$
    $$\Tilde{W}^s_{loc}\left(\omega,\underline{\theta}^\star\right)\subseteq\left\{\underline{\theta}\in\mathbb{T}^N\;\Big|\;\theta^\star_\ell=\frac{1}{\vert\mathcal{N}_\ell\vert}\sum_{i\in\mathcal{N}_\ell}\theta_i\;\;\forall\ell=1,\ldots m \right\},$$
    where $\theta^\star\in[-\pi,\pi]$ and $\mathcal{U}$ is a neighborhood of $\underline{\theta}^\star$, with $(\underline{\theta}^\star)_i=\theta^\star_\ell$ for all $i\in\mathcal{N}_\ell$.
\end{corollary}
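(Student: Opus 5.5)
The plan is to reduce the statement to Corollary \ref{Cor_man} applied separately to each connected component $\mathcal{G}_\ell$, $\ell=1,\ldots,M$. The first step is to show that \nameref{GI+} makes every sub-population mean
$$\Theta^\ell(t)\coloneqq\frac{1}{|\mathcal{N}_\ell|}\sum_{i\in\mathcal{N}_\ell}\theta_i(t)$$
a first integral. Summing \eqref{RKM} over $i\in\mathcal{N}_\ell$ (after the rotation $\theta\mapsto\theta+t\underline{\varpi}$ of Remark \ref{rk1}) gives
$$\d\Theta^\ell(t)=\frac{1}{|\mathcal{N}_\ell|}\sum_{i,j\in\mathcal{N}_\ell}\frac{K}{N}a_{ij}\sin(\theta_j-\theta_i)\,\d t+\frac{1}{|\mathcal{N}_\ell|}\sum_{j=1}^m\sum_{i\in\mathcal{N}_\ell}G_{ij}(\theta)\,\d\textbf{W}^j_t .$$
Here we use that $a_{ij}=0$ whenever $i\in\mathcal{N}_\ell$ and $j\notin\mathcal{N}_\ell$. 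The drift term vanishes by the symmetry of $\mathcal{A}_\ell$ and the oddness of $\sin$, exactly as in \eqref{mean_phase}. The noise term vanishes identically by \nameref{GI+}. Since the rough integral of a zero integrand is zero, $\Theta^\ell(t)=\Theta^\ell(0)$ for all $t$ and every $\ell$, almost surely.

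The second inclusion then follows directly from the definition of $\Tilde{W}^s_{loc}(\omega,\underline{\theta}^\star)$ as the set of initial data whose trajectory converges exponentially to $\underline{\theta}^\star$. If $\theta_t\to\underline{\theta}^\star$, then passing to the limit in the conserved quantities gives
$$\frac{1}{|\mathcal{N}_\ell|}\sum_{i\in\mathcal{N}_\ell}\theta_i(0)=\lim_{t\to\infty}\Theta^\ell(t)=\theta^\star_\ell\qquad\text{for every }\ell .$$
This is the same argument as in the proof of Corollary \ref{Cor_man}, now applied block by block. Existence of the local stable manifold comes from \cite{KuehnNeamtu21}, as before. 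The linearization $\frac{K}{N}\mathcal{A}$ is block diagonal with Laplacian blocks, so by \cite[Lemma 1.7]{chung1997spectral} its kernel is spanned by the $M$ indicator vectors of the components and every other eigenvalue is negative. This verifies the spectral assumption used there. I would carry over the hypothesis $\D G(0)=\D^2G(0)=0$ from Corollary \ref{Cor_man} if the cited theory requires it.

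For the first inclusion, take $\underline{\theta}$ in the left-hand set, so $\underline{\theta}\in\mathcal{U}$. I will take $\mathcal{U}(\omega)$ to be the product of the random neighbourhoods from the subsystem version of Theorem \ref{main_th}. Each sub-population, written in the centred variables $\hat{\theta}^\ell=\theta|_{\mathcal{N}_\ell}-\Theta^\ell$, satisfies the analogue of \eqref{reduced RKM} with noise $\Tilde{G}^\ell$. The coefficient $\Tilde{G}^\ell$ coincides with $G$ restricted to $\mathcal{N}_\ell$, because \nameref{GI+} kills the correction term.

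The main obstacle is that the subsystems are not genuinely decoupled, since $G$ may depend on all coordinates of $\theta$. I would handle this by applying the Lyapunov argument of Theorem \ref{main_th} to the full vector $\hat{\theta}=(\hat{\theta}^1,\ldots,\hat{\theta}^M)$ with $V(\hat{\theta})=\|\hat{\theta}\|$. Because $\hat{\theta}$ is orthogonal to every component indicator vector, the block Laplacian satisfies $\hat{\theta}^\top(-\mathcal{A})\hat{\theta}\geq\min_\ell\lambda_2(\mathcal{A}_\ell)\|\hat{\theta}\|^2$. This yields \eqref{Lyapunov_cond} with $d=\frac{KC_\delta}{N}\min_\ell\lambda_2(\mathcal{A}_\ell)$. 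Lemma \ref{lemma1}, applied componentwise, shows that the Doss--Sussmann transform preserves these orthogonality constraints.

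With this in place, the proof of Theorem \ref{main_th} gives exponential decay $\hat{\theta}_t\to0$, provided the noise is small enough that $C_G<\lambda_0$. The intra-component spread condition $|\theta_i-\theta_j|<\pi$ is what places the data in $I_\delta$ for some $\delta<\pi/2$. Combined with the conservation of each $\Theta^\ell$, this gives exponential convergence $\theta_t\to\underline{\theta}^\star$, so $\underline{\theta}\in\Tilde{W}^s_{loc}$.
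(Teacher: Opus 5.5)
Your conservation argument for each $\Theta^\ell$, the second inclusion, and the block-diagonal spectral check are fine. You are also right that $\D G(0)=\D^2G(0)=0$ and a smallness condition $C_G<\lambda_0$ are tacitly needed.

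\textbf{The gap is in the first inclusion.} You deliberately allow $G$ to couple different components, and then you invoke the Lyapunov argument of Theorem \ref{main_th} for the concatenated centred vector $\hat\theta$. That argument, via \cite[Theorem 23]{duc2025strong} and the Doss--Sussmann flow, needs $\hat\theta\equiv0$ to be a solution. So the centred noise coefficient $\hat\theta\mapsto G(\hat\theta+\underline{\theta}^\star)$ must vanish at $\hat\theta=0$. In the one-component case this came from $G(0)=0$ together with \eqref{rot_inv}. But \eqref{rot_inv} only absorbs a \emph{common} shift of all coordinates, not independent shifts $\theta^\star_\ell$ on different blocks. Hence $G(\underline{\theta}^\star)=0$ does not follow from \nameref{G} and \nameref{GI+}.

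For a concrete failure, take $N=4$, components $\{1,2\},\{3,4\}$, $m=1$, and $G_1=-G_2=\sigma\sin(\theta_1-\theta_3)$, $G_3=G_4=0$. All stated hypotheses hold. Yet $G(a,a,b,b)\neq0$ whenever $a-b\notin\pi\mathbb{Z}$. Then $\underline{\theta}^\star$ is not an equilibrium: the noise acting on $\theta_1-\theta_2$ does not vanish near it, even though $\underline{\theta}^\star$ itself lies in your left-hand set. So neither your Lyapunov step nor the stable-manifold statement applies. For the same reason, the condition you carry over from Corollary \ref{Cor_man} should read $\D G(\underline{\theta}^\star)=\D^2G(\underline{\theta}^\star)=0$; vanishing at $0$ does not transfer to block-constant states.

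\textbf{The fix.} The missing hypothesis is $G(\underline{\theta}^\star)=0$, which the phrase ``generalized coherent equilibria'' presupposes. It holds, for example, if $G_{ij}$ for $i\in\mathcal{N}_\ell$ depends only on $\theta|_{\mathcal{N}_\ell}$ and is invariant under shifts of that block, as for \eqref{noise} with $\mathcal{B}$ split along the components of $\mathcal{A}$. The paper's one-line proof, which applies Corollary \ref{Cor_man} to each sub-population, tacitly assumes exactly this decoupling; that is where the condition comes from there.

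Once you add $G(\underline{\theta}^\star)=0$, your joint argument goes through: take $V(\hat\theta)=\|\hat\theta\|$ and $d=\frac{KC_\delta}{N}\min_\ell\lambda_2(\mathcal{A}_\ell)$, with the minimum over components having at least two vertices. This route is genuinely more general than the paper's, since it tolerates $G$ depending on other blocks as long as $G$ vanishes at the block-constant state. Do take $\mathcal{U}$ to be the joint basin $\{\|\hat\theta\|<r(\omega)\}$ produced by that argument. A product of subsystem basins is not available once the blocks are coupled.
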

\begin{proof}
    It follows directly by applying Corollary \ref{Cor_man} to each sub-population of the initial system \eqref{RKM}.
\end{proof}
\end{subsection}
\begin{subsection}{Synchronization without rotational invariance}\label{Syn_rot}
    We now discuss the role of the rotational invariance of $G$, i.e. \eqref{rot_inv}, and what happens if this assumption is removed. Its main mathematical purpose has been to ensure that the system is not affected by the various changes of variables we have performed. We now show how to remove this assumption and which additional conditions are needed in order to still obtain convergence of the phases. In particular, most of the other hypotheses must be adapted with the followings:
    \begin{description}
        \item[(\textbf{H}\textsubscript{\textbf{W}}+)\label{W+}] For every pair of indexes $i,j$ and for any realization $\omega$ we have that $\mathcal{W}^i_t(\omega)=\mathcal{W}^j_t(\omega)$, so that the vector in $\R^m$ has identical components.
        \item[(\textbf{H}\textsubscript{\textbf{G}}II)\label{GII}] $G$ is in $C^3_b(\T^N,\mathcal{L}(\R^m,\T^N))$ and it is a diagonal matrix, where we denote $G_i\coloneqq G_{ii}$. Moreover we suppose that $G(0)=0$ and 
        $$G_i(\underline{a})=G_j(\underline{a})\qquad\forall i,j=1,\ldots,N\text{ and }\forall a\in\R,$$
        where $\underline{a}=(a,\ldots,a)$.
    \end{description}
A class of functions that can be considered in this setting is given by $G_i(\theta)=\sigma \sin(\theta_i)$ or, alternatively, in the case of $\mathbb{R}$, the choice $G_i(\theta)=\sigma \theta_i$ would be natural.  Moreover, under the new hypothesis \nameref{W+}, it is not restrictive to assume that $G$ is diagonal. Indeed, without this assumption, one could simply redefine the noise acting on each oscillator by setting $G_i = \sum_{j=1}^N G_{ij}$. Under these assumptions, after performing the usual change of variables, we obtain the following system:
$$\begin{cases}
\displaystyle
    \d\hat{\theta}_i(t)=\left[\frac{K}{N}\sum_{j=1}^Na_{ij}\sin{(\hat{\theta}_j(t)-\hat{\theta}_i(t))}\right]\d t+  \hat{G}_i(\hat{\theta})\d \textbf{W}_t ,\qquad\forall i=1,...,N\\
    \displaystyle
    \sum_{i=1}^N\hat{\theta}_i=0
\end{cases}$$
with
$$\hat{G}_i(\hat{\theta})\coloneqq\left(G_i(\hat{\theta}+\underline{\Theta}(t)+\underline{\varpi})-\frac{1}{N}\sum_{j=1}^NG_j(\hat{\theta}+\underline{\Theta}(t)+\underline{\varpi})\right),\qquad\sum_{i=1}^N\hat{G}_i(\hat{\theta})=0$$
and $\Theta$ satisfies
$$\d \Theta(t)=\frac{1}{N}\sum_{i=1}^N G_i(\theta+\underline{\varpi})\d\textbf{W}_t.$$
In particular because of our hypothesis \nameref{GII} we have that $\hat{G}(0)=0$. Therefore we are now in the same framework as before and we can state the following corollary.
\begin{corollary}
     Under the hypotheses \nameref{A}, \nameref{B}, \nameref{W}, \nameref{W+}, \nameref{GII} then there exists a constant $\lambda_0$ such that for any $C_{G}<\frac{\lambda_0}{2}\frac{N}{N-1}$ we have that almost surely, for initial data satisfying
        \begin{equation}
            \underset{i,j}{\max}\|\theta_i(0)-\theta_j(0)\|\leq2\delta\qquad\text{for some $\delta\in[0,\pi/2)$}
        \end{equation}
        and
        \begin{equation}
            \underset{i}{\max}\bigg\|\theta_i(0)-\frac{1}{N}\sum_{j=\textcolor{black}{1}}^N\theta_j(0)-\varpi\bigg\|\leq r(\omega)
        \end{equation}
    with $r(\omega)$ as in Theorem \ref{main_th}, it holds 
    $$\lim_{t\rightarrow\infty}\|\theta_i(t,\omega)-\Theta(\omega,t)\|=0,\qquad\text{for every }i=1,\ldots,N.$$
\end{corollary}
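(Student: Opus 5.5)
The plan is to reduce the statement to Theorem~\ref{main_th}, applied to the reduced system written just before the corollary. After the frequency shift and the change of variables $\hat\theta=\theta-\underline{\Theta}$, the dynamics of $\hat\theta$ are
\[
\d\hat\theta_i=\frac{K}{N}\sum_j a_{ij}\sin(\hat\theta_j-\hat\theta_i)\,\d t+\hat G_i(\hat\theta)\,\d\mathbf{W}_t,\qquad \sum_i\hat\theta_i=0 .
\]
This is the form \eqref{reduced RKM}. The one-dimensional noise uses \nameref{W+}: all components of $\mathcal{W}$ coincide, so $G\,\d\mathbf{W}$ collapses to $\mathrm{diag}(G_i)\,\d\mathbf{W}$.

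The first step is to check that the ingredients of Theorem~\ref{main_th} still hold, with $\hat G$ in place of $\tilde G$.

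\begin{enumerate}
\item The drift is unchanged, so the Lyapunov computation of Step~1 with $V(\theta)=\|\theta\|$ holds verbatim. It gives $d=KC_\delta\lambda_2/N$ on $I_\delta$, using \nameref{A} and $\hat\theta\perp\underline 1$.
\item By \nameref{GII}, $\hat G(0)=0$: at $\hat\theta=0$ every argument is a diagonal vector $\underline{a}$, and $G_i(\underline a)=G_j(\underline a)$, so each centred difference vanishes.
\item $\sum_i\hat G_i\equiv0$. Hence the argument of Lemma~\ref{lemma1} (the representation \eqref{int_res} of $\eta_t$) shows that the Doss--Sussmann variable $z_t$ also stays in the zero-mean subspace. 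This is exactly what is needed for the spectral bound \eqref{lower_bound} with $\lambda_2$.
\item The same estimate as in the proof of Theorem~\ref{theor_synch} gives $C_{\hat G}\le 2\frac{N-1}{N}C_G<\lambda_0$. Derivatives of $\hat G$ in $\hat\theta$ are bounded by those of $G$, because the shift by $\underline\Theta+\underline\varpi$ is a translation.
\end{enumerate}

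The main obstacle is that $\hat G$ is now \emph{non-autonomous}: it depends on $t$ and $\omega$ through $\Theta(t)$, which itself solves an RDE coupled to $\theta$. Theorem~\ref{main_th} and the cited \cite[Theorem 23]{duc2025strong} are stated for autonomous diffusion coefficients $g$. My first attempt is to avoid this entirely by treating the full system in the variables $(\hat\theta,\Theta)\in\R^{N-1}\times\R$ as an autonomous RDE, with Lyapunov function $V(\hat\theta,\Theta)=\|\hat\theta\|$. Then:

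\begin{itemize}
\item The drift of $\Theta$ vanishes by \eqref{mean_phase}, so the drift condition \eqref{Lyapunov_cond} only involves the $\hat\theta$-component and still holds with the same $d$. The target set is now the invariant line $\{\hat\theta=0\}$ rather than a point.
\item In the Doss--Sussmann step only the uniform bounds $\|\eta_t\|,\|\psi_t\|\le\lambda$ on greedy intervals are used. These depend only on $C_{\hat G}$ and the $p$-variation of $\mathbf{W}$, uniformly in the frozen value of $\Theta$, so the local estimate $V(y_t)\le e^{-t\eta+L_V\lambda/\alpha}V(y_{\tau_k})$ persists.
\end{itemize}

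Concatenation over greedy intervals and the Birkhoff argument with the ergodic shift $\Xi$ then go through unchanged. They yield $\|\hat\theta_t\|\le r^\star(\omega)e^{-\mu t}$ whenever $\|\hat\theta_0\|\le r(\omega)$. The hypotheses on the initial data are arranged to give this: $\max_{i,j}|\theta_i(0)-\theta_j(0)|\le2\delta$ keeps the dynamics in $I_\delta$, where $C_\delta$ applies, and the second condition gives $\|\hat\theta_0\|\le r(\omega)$ after the $\varpi$-shift. Finally, undoing the change of variables gives $\|\theta_i(t)-\Theta(t)\|=|\hat\theta_i(t)|\to0$.

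Unlike Theorem~\ref{theor_synch}, no claim is made about convergence of $\Theta$ itself. This is consistent with the subsection's announcement that the limit need not exist, so no moment condition \eqref{moment_est} is required.
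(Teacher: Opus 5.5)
Your reduction matches the paper's. You pass to $\hat\theta=\theta-\underline{\Theta}$, note that $\hat G(0)=0$ (from $G_i(\underline{a})=G_j(\underline{a})$) and $\sum_i\hat G_i=0$, and bound $C_{\hat G}\le 2\frac{N-1}{N}C_G$. You then rerun Theorem~\ref{main_th} and the first half of the proof of Theorem~\ref{theor_synch}, dropping only $\Theta_\infty$. The paper does all of this in one line, saying the reduced system is ``in the same framework as before.''

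Where you genuinely differ is in noticing that $\hat G$ depends on the solution through $\Theta(t)$. Because of this, Theorem~\ref{main_th} and \cite[Theorem 23]{duc2025strong} do not apply verbatim. You repair it by lifting to the joint system $(\hat\theta,\Theta)$ and proving stability of the invariant line $\{\hat\theta=0\}$ with the partial Lyapunov function $\|\hat\theta\|$. This justifies a step the paper skips.

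Two points of that repair need more than you state.

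First, $V$ no longer controls $\Theta$, so each perturbation term in the transformed ODE must be shown to be $O(\|z^{\hat\theta}_t\|)$, not merely $O(\lambda)$. The needed input is $\|\eta^{\hat\theta}_t\|\le\lambda\|z^{\hat\theta}_t\|$. This holds because $\hat G(0,\Theta)=0$ for every $\Theta$, which makes $\{\hat\theta=0\}$ invariant under the pure noise flow. Also, $\Theta$ moves on each greedy interval and is not frozen. So the smallness constants are those of the full coefficient $(\hat G,\frac1N\sum_iG_i)$, not of $C_{\hat G}$ alone.

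Second, $\Theta$ is drift-free only after the shift $\theta\mapsto\theta-t\underline{\varpi}$. Without rotational invariance, that shift makes the noise coefficient explicitly time dependent, so the lifted system is not autonomous as claimed. You have two ways out:
\begin{enumerate}
\item Accept the smooth time dependence, which the Doss--Sussmann construction tolerates.
\item Keep the drift $\underline{\varpi}$ in the $\Theta$-equation. You then also need the $(\hat\theta,\Theta)$-block of $\psi_t$ to vanish on $\{\hat\theta=0\}$ and hence be $O(\|z^{\hat\theta}_t\|)$. This follows from the same invariance plus $C^2$ bounds on the flow. Without it, the uniform bound $\|\psi_t\|\le\lambda$ leaves a non-decaying term of size $\lambda|\underline{\varpi}|$.
\end{enumerate}
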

The proof of this corollary follows the same lines as that of Theorem \ref{theor_synch}; the only aspect we are unable to establish is the existence of $\Theta_\infty(\omega)$. This makes the case particularly interesting, since the long-time behavior of the phases no longer converges to a limiting random variable, as in the previous setting. Instead, although the trajectories still exhibit convergence, they do not admit a well-defined asymptotic limit, as we will see through our simulations in the next section.
\end{subsection}
\end{section}
\begin{section}{Numerical Simulations}\label{Num_sec}
In this section, we illustrate the previous theoretical results through numerical simulations. Our main goal is to highlight the sharpness of certain hypotheses and to provide evidence that further improvements may be achievable, particularly through a refined theory of Lyapunov functions for rough differential equations. For the simulations, we consider system \eqref{RKM} with
 $$G_i(\theta)=\sigma\sum_{j=1}^Nb_{ij}\sin(\theta_i-\theta_j)\qquad\forall i=1,\ldots,N,\;\sigma\in\R^+$$
as proposed in \eqref{noise}\textcolor{black}{, with $\sigma=0.4$}. \textcolor{black}{In all numerical experiments, we consider a network of $N=10$ oscillators. The adjacency matrices $\mathcal{A}$ and $\mathcal{B}$ are chosen from two classes: the all-to-all coupling case, as defined in Remark~\ref{example_matrix}, and the class of general symmetric matrices. By the latter, we mean matrices satisfying Assumption~\nameref{A}. In the numerical experiments, such matrices are generated randomly as follows: for each pair $i<j$, an entry is independently sampled according to a Bernoulli distribution with parameter $1/2$, and symmetry is then enforced by setting $a_{ij}=a_{ji}$ (and similarly for $\mathcal{B}$).} Moreover, the stochastic process $\mathbf{W}_t$ is chosen to be a fractional Brownian motion with Hurst parameter $H\in\left(\frac{1}{3},\frac{1}{2}\right]$. \textcolor{black}{In particular we set $H=\frac{5}{12}$ and the fBm is generated accordingly to the Davies-Harte algorithm \cite{davies1987tests,wood1994simulation}. All simulations are performed using an explicit Euler-type time discretization with time step $\Delta t=10^{-3}$, driven by the increments of the generated fractional Brownian motion.}  The parameter $K$ is set equal to $1$ in the case of all-to-all coupling, and equal to \textcolor{black}{$N/2=5$} for general symmetric adjacency matrices. This choice is made in order to preserve the rough behavior of the trajectories in the latter case, making it more apparent in the simulations. Indeed, without this scaling, convergence to equilibrium would be proportional to $N$, whereas in the all-to-all coupling case it is typically of order $1$. The initial data are \textcolor{black}{sampled uniformly at random} from the intervals $[0,\pi]$ or $[0,2\pi]$.

\begin{figure}[H]
  \centering
  \begin{minipage}{0.40\textwidth}
    \centering
    \begin{overpic}[width=\linewidth]{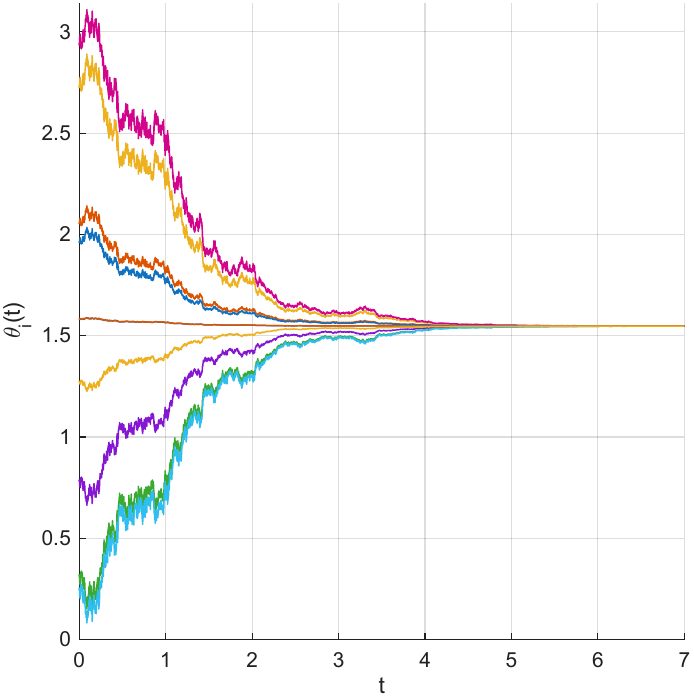}

\put(52,0){\color{white}\rule{0.3cm}{0.3cm}}
\put(53.5,0){$t$}

\put(0,51){\color{white}\rule{0.3cm}{0.4cm}}
\put(-7,50.5){$\theta_i(t)$}
\end{overpic}
  \end{minipage}\hfill
  \begin{minipage}{0.40\textwidth}
    \centering
        \begin{overpic}[width=\linewidth]{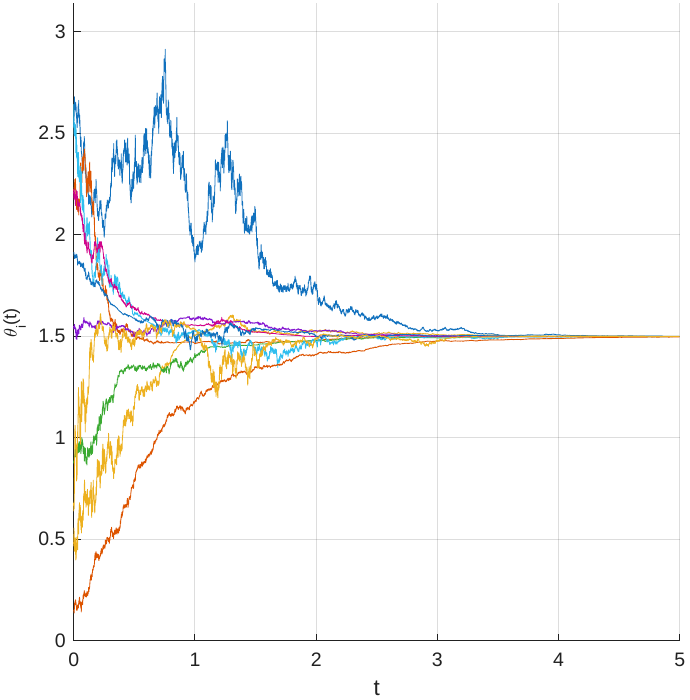}

\put(52,0){\color{white}\rule{0.3cm}{0.3cm}}
\put(53.5,0){$t$}

\put(0,51){\color{white}\rule{0.3cm}{0.4cm}}
\put(-7,50.5){$\theta_i(t)$}
\end{overpic}
  \end{minipage}

  \caption{Plots illustrating phase synchronization in accordance with our theoretical results. The left panel shows the case of all-to-all coupling, while the right panel corresponds to a general symmetric adjacency matrix.}
  \label{fig_1}
\end{figure}
In Figure \ref{fig_1}, we present simulations with initial data satisfying the hypotheses of our previous theoretical results. In particular, the condition on the initial data $\max_{i,j}|\theta_i(0)-\theta_j(0)|<\pi$ appears to be essentially sharp from a theoretical standpoint. This can be readily illustrated by considering the following simple setting: let $\mathcal{A}$ and $\mathcal{B}$ be all-to-all adjacency matrices, and choose the initial data as
$$\theta_i(0)=\begin{cases}
    \theta^\star\quad&\text{if }i=1,\ldots,N_0,\\
    \theta^\star+\pi\quad&\text{if }i=N_0+1,\ldots,N,
\end{cases}$$
for a fixed $\theta^\star\in[-\pi,\pi]$ and a fixed $N_0\in\{1,\ldots,N\}$. Then, clearly, we have $\mathrm{d}\theta_i=0$ for all $i$, and hence no synchronization occurs. 

On the other hand, simulations such as those reported in Figure \ref{fig_2} indicate that synchronization may still occur even when the initial data are distributed over the whole interval $[0,2\pi]$.
\begin{figure}[H]
  \centering
  \begin{minipage}{0.40\textwidth}
    \centering
    \begin{overpic}[width=\linewidth]{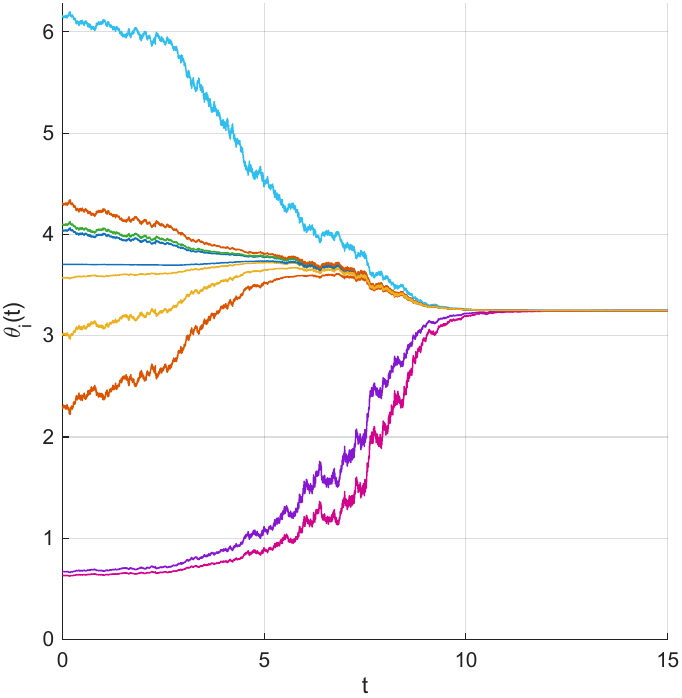}

\put(52,0){\color{white}\rule{0.3cm}{0.3cm}}
\put(53.5,0){$t$}

\put(0,51){\color{white}\rule{0.3cm}{0.4cm}}
\put(-7,50.5){$\theta_i(t)$}
\end{overpic}
  \end{minipage}\hfill
  \begin{minipage}{0.40\textwidth}
    \centering
    \begin{overpic}[width=\linewidth]{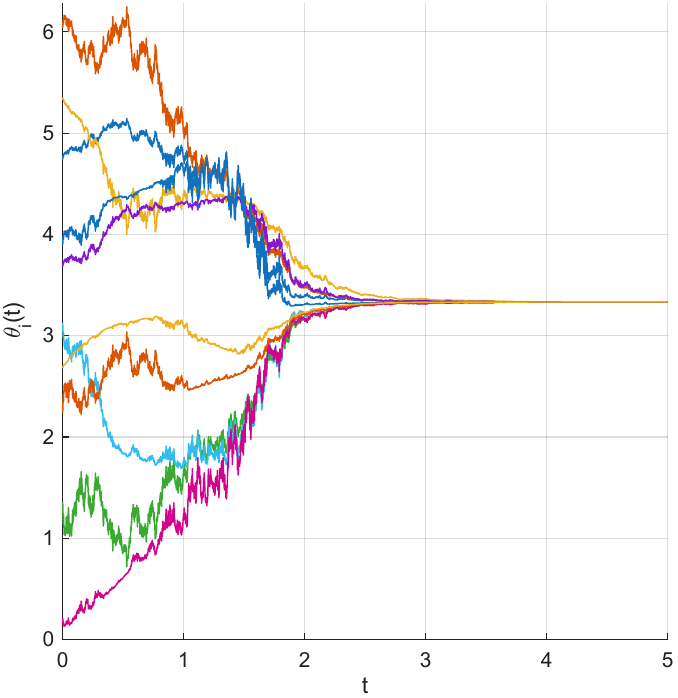}

\put(52,0){\color{white}\rule{0.3cm}{0.3cm}}
\put(53.5,0){$t$}

\put(0,51){\color{white}\rule{0.3cm}{0.4cm}}
\put(-7,50.5){$\theta_i(t)$}
\end{overpic}
  \end{minipage}

  \caption{Plots showing that synchronization occurs even when our hypotheses are violated. The left panel corresponds to the case of all-to-all coupling, while the right panel shows the case of a general symmetric adjacency matrix.}
  \label{fig_2}
\end{figure}
Actually, numerical evidence suggests that global synchronization occurs as soon as there exists at least one pair of particles, indexed by $i_0$ and $j_0$, such that $|\theta_{i_0}(0)-\theta_{j_0}(0)|\neq\pi.$ On the other hand, the intensity of the noise appears to play a crucial role. In particular, when the parameter $\sigma\gg1$, complete synchronization seems to be lost, and a splitting phenomenon, similar to that observed for balanced graphs, emerges, as illustrated in Figure \ref{fig_3}, even when the initial data lie in $[0,\pi]$. This can be explained by the fact that, for $\sigma\gg1$, the condition $C_{\tilde{G}}<\lambda_0$ is no longer satisfied.
\begin{figure}[H]
  \centering
  \begin{minipage}{0.40\textwidth}
    \centering
    \begin{overpic}[width=\linewidth]{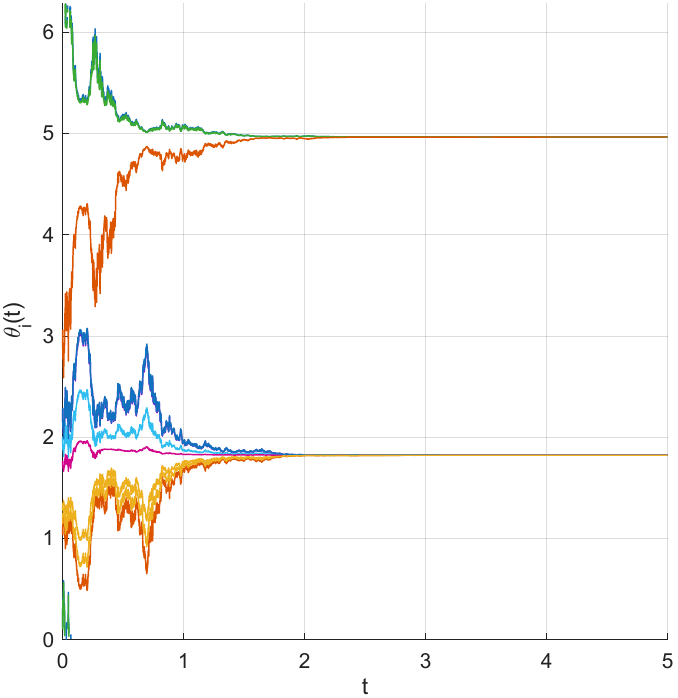}

\put(52,0){\color{white}\rule{0.3cm}{0.3cm}}
\put(53.5,0){$t$}

\put(0,51){\color{white}\rule{0.3cm}{0.4cm}}
\put(-7,50.5){$\theta_i(t)$}
\end{overpic}
  \end{minipage}\hfill
  \begin{minipage}{0.40\textwidth}
    \centering
    \begin{overpic}[width=\linewidth]{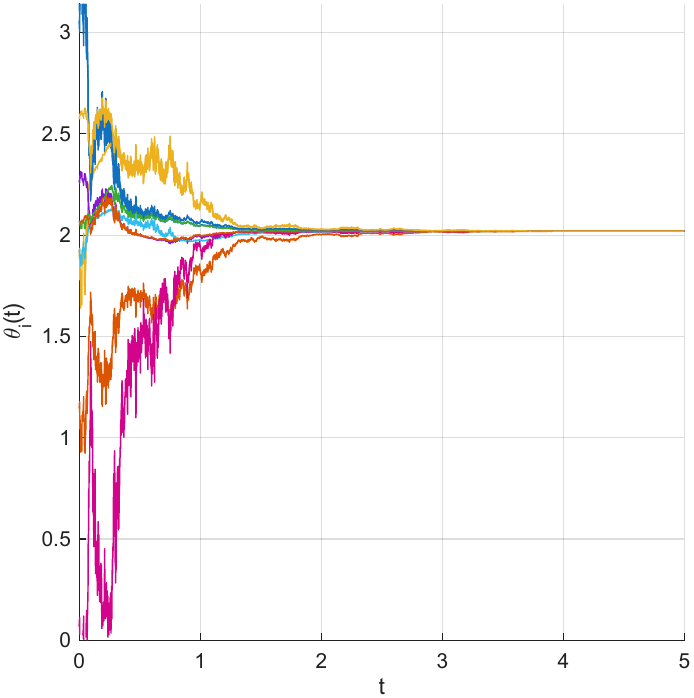}

\put(52,0){\color{white}\rule{0.3cm}{0.3cm}}
\put(53.5,0){$t$}

\put(0,51){\color{white}\rule{0.3cm}{0.4cm}}
\put(-7,50.5){$\theta_i(t)$}
\end{overpic}
  \end{minipage}
  \caption{Left: splitting observed in the all-to-all coupling configuration for $\sigma \gg 1$. Right: synchronization in a disconnected graph, arising from the connected structure of the noise function.}
  \label{fig_3}
\end{figure}
As noted earlier, numerical simulations indicate that phase synchronization can occur even in cases where $\mathcal{A}$ is a disconnected graph, as long as $\mathcal{B}$ is connected. Our expectation is that this phenomenon can be rigorously proven in the future. This would require the development of a more sophisticated Lyapunov function method for rough equations, one that integrates the algebraic effect of the noise into the condition stated in \eqref{Lyapunov_cond}.\jump

Furthermore, as shown in Subsection \ref{Syn_rot}, the occurrence of synchronization does not strictly require rotational invariance. Under appropriate additional conditions on the noise $\textbf{W}$ and the noise function $G$, synchronization is still observed. However, rather than converging to a deterministic fixed point, the phases converge to a random attractor, which results in persistent stochastic oscillations at large times. This behavior is depicted in Figure \ref{fig_4}.
\begin{figure}[H]
  \centering
  \begin{minipage}{0.40\textwidth}
    \centering
        \begin{overpic}[width=\linewidth]{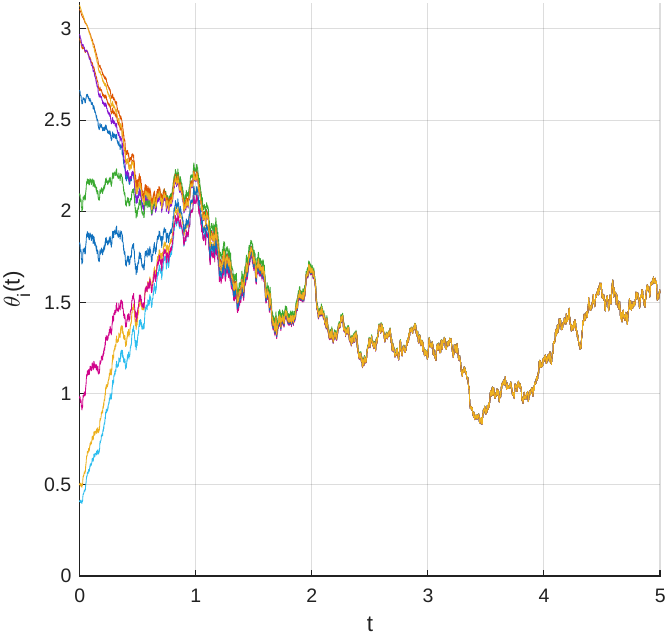}

\put(52,0){\color{white}\rule{0.3cm}{0.3cm}}
\put(53.5,0){$t$}

\put(0,48){\color{white}\rule{0.3cm}{0.4cm}}
\put(-7,48){{$\theta_i(t)$}}
\end{overpic}
  \end{minipage}\hfill
  \begin{minipage}{0.40\textwidth}
    \centering
    \begin{overpic}[width=\linewidth]{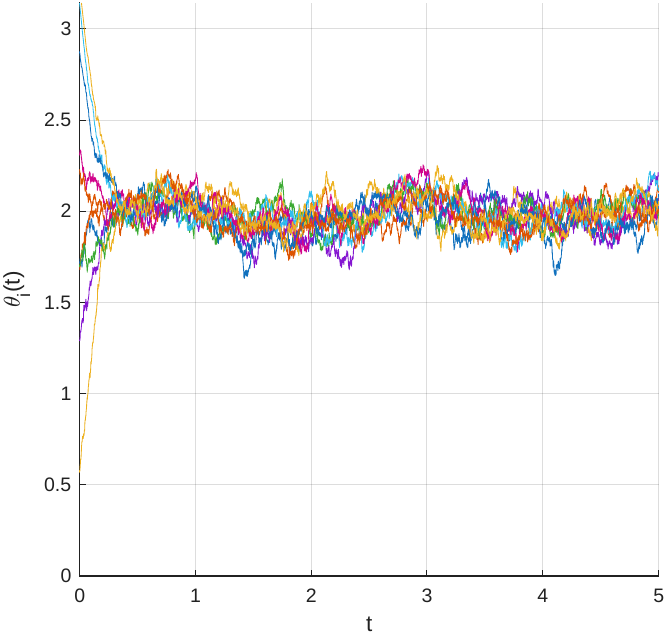}

\put(52,0){\color{white}\rule{0.3cm}{0.3cm}}
\put(53.5,0){$t$}

\put(0,48){\color{white}\rule{0.3cm}{0.4cm}}
\put(-7,48){$\theta_i(t)$}
\end{overpic}
  \end{minipage}
  \caption{Left: synchronization for $G_i(\theta) = \sin(\theta_i)$ with identical noise. Right: desynchronization when the components of $\textbf{W}$ are non-identical.}
  \label{fig_4}
\end{figure}
We now address the local synchronization of the frequencies $\varpi_i$, a concept introduced in Section \ref{syn_fre}. \textcolor{black}{In this case, the initial frequencies are chosen independently and identically distributed according to the Gaussian distribution $\mathcal{N}(0,1)$.} As observed earlier, the quantities $\varpi_i(t)$, when viewed as the system's instantaneous frequencies, generally require interpretation in a distributional sense. The following simulation of \eqref{RKM} demonstrates their typical behavior when the initial frequencies are non-identical. Specifically, we present both the mean of the raw instantaneous frequencies and a smoothed version obtained by averaging over various time intervals. \textcolor{black}{In the following, Figures~\ref{fig_5}--\ref{fig_6} show the results of the same simulation with identical initial data. The purpose of these figures is to compare the evolution of the frequencies extracted from the phase trajectories, $\dot{\theta}(t)$, with the evolution of the auxiliary frequencies $\varpi(t)$ defined by system~\eqref{RKM_fr}.}

\begin{figure}[H]
  \hspace*{-0.2cm} 
  \begin{overpic}[width=\linewidth]{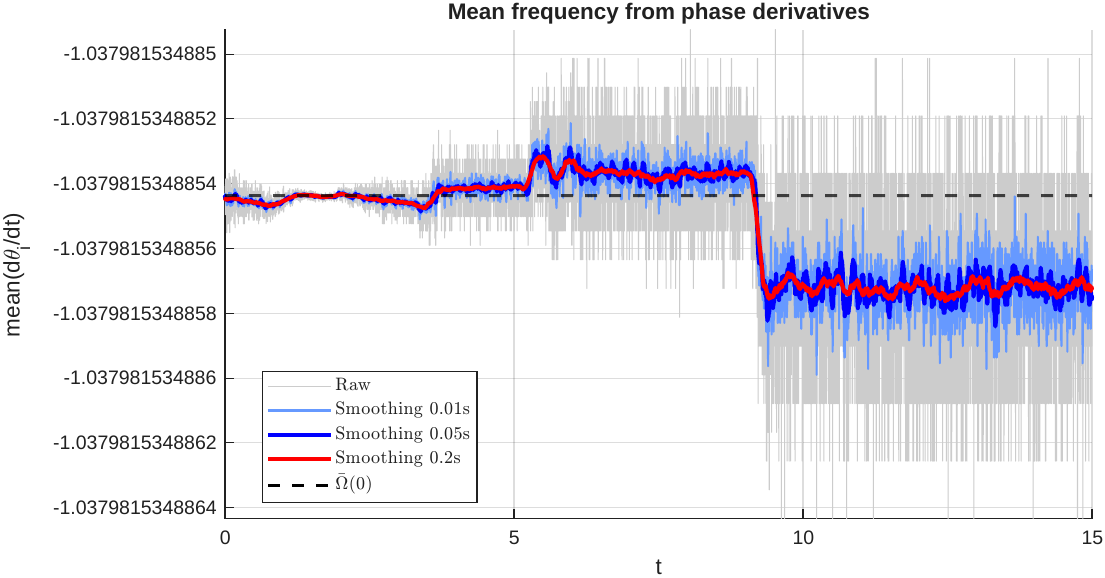}

\put(40,50){\color{white}\rule{6cm}{0.4cm}}
\put(40,0){\color{white}\rule{6cm}{0.4cm}}
\put(0,0){\color{white}\rule{0.4cm}{6cm}}
\put(30.2,7.1){\color{white}\rule{1.8cm}{1.6cm}}
\put(30.2,16.85){\scalebox{0.72}{Raw}}
\put(30.2,14.65){\scalebox{0.72}{Smoothing $0.01s$}}
\put(30.2,12.50){\scalebox{0.72}{Smoothing $0.05s$}}
\put(30.2,10.30){\scalebox{0.72}{Smoothing $0.2s$}}
\put(30.2,7.85){\scalebox{0.72}{$\overline{\Omega}(0)$}}
\put(0,21){\rotatebox{90}{Mean of $\dot{\theta}_i(t)$}}
\put(59.5,1){$t$}
\end{overpic}
  \caption{Simulation of the mean frequency $\overline{\Omega}(t)$, smoothed over different time intervals}
  \label{fig_5}
\end{figure}
\textcolor{black}{In particular, we note that the mean evolution of the frequencies $\dot{\theta}_i$s remains nearly constant. This can be explained by the highly irregular behavior of the individual frequencies, whose fluctuations largely compensate each other when averaged, yielding an almost constant mean.} In contrast, Figure \ref{fig_6} reveals that the individual frequencies $\Dot{\theta}_i(t)$ are far from synchronized, exhibiting highly irregular evolution which is consistent with their interpretation in a distributional sense. However, under condition ($\mathbf{H_GI+}$), Figure \ref{fig_6} further shows that \eqref{RKM_fr} faithfully captures the asymptotic behavior of the original system, up to smoothing. If ($\mathbf{H_GI+}$) does not hold, one would still expect frequency synchronization \textcolor{black}{for the system} \eqref{RKM_fr}, modulo a translation by a random constant, as in Theorem \ref{theor_synch}.
\begin{figure}[H]
  \centering
  \begin{minipage}{0.33\textwidth}
    \centering
    \begin{overpic}[width=\linewidth]{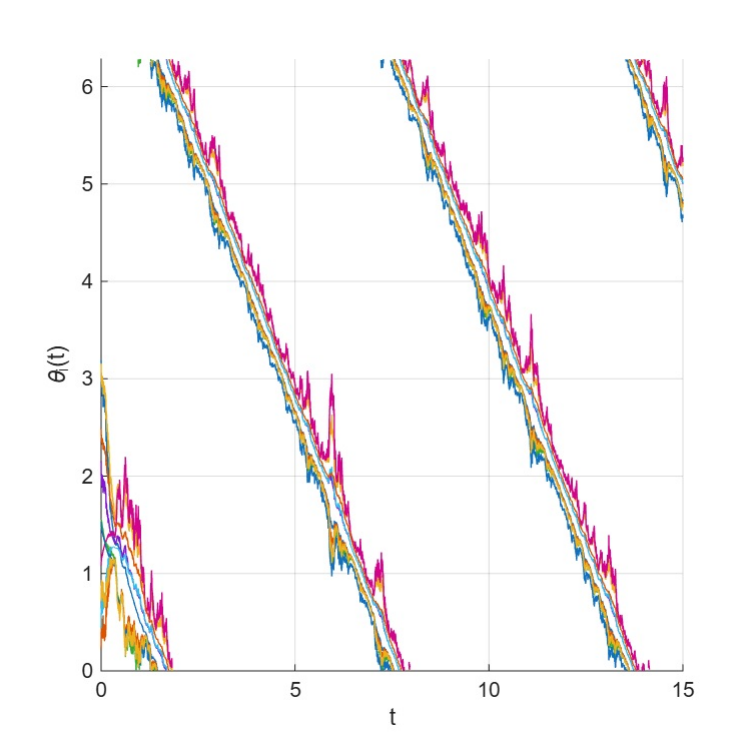}

\put(50,1){\color{white}\rule{0.3cm}{0.3cm}}
\put(51,2){\scalebox{0.8}{$t$}}

\put(4,48){\color{white}\rule{0.3cm}{0.4cm}}
\put(-2,50){\scalebox{0.8}{$\theta_i(t)$}}
\end{overpic}
  \end{minipage}\hfill
  \begin{minipage}{0.33\textwidth}
    \centering
    \begin{overpic}[width=\linewidth]{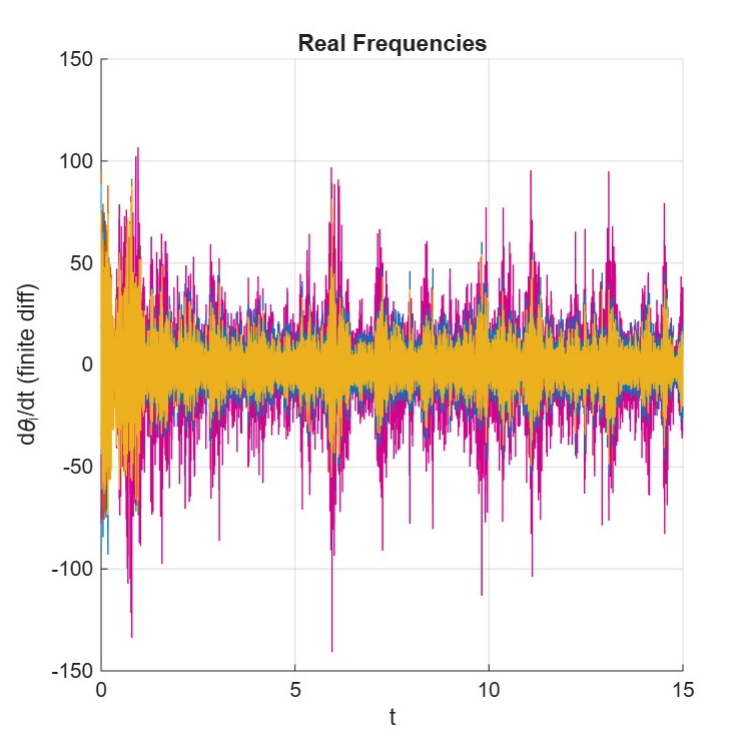}

 \put(35,92.4){\color{white}\rule{3cm}{0.3cm}}
\put(50,1){\color{white}\rule{0.3cm}{0.3cm}}
\put(51,2){\scalebox{0.8}{$t$}}

\put(0,30){\color{white}\rule{0.3cm}{2cm}}
\put(-2,50){\scalebox{0.8}{$\dot{\theta}_i(t)$}}
\end{overpic}
  \end{minipage}\hfill
  \begin{minipage}{0.33\textwidth}
    \centering
    \begin{overpic}[width=\linewidth]{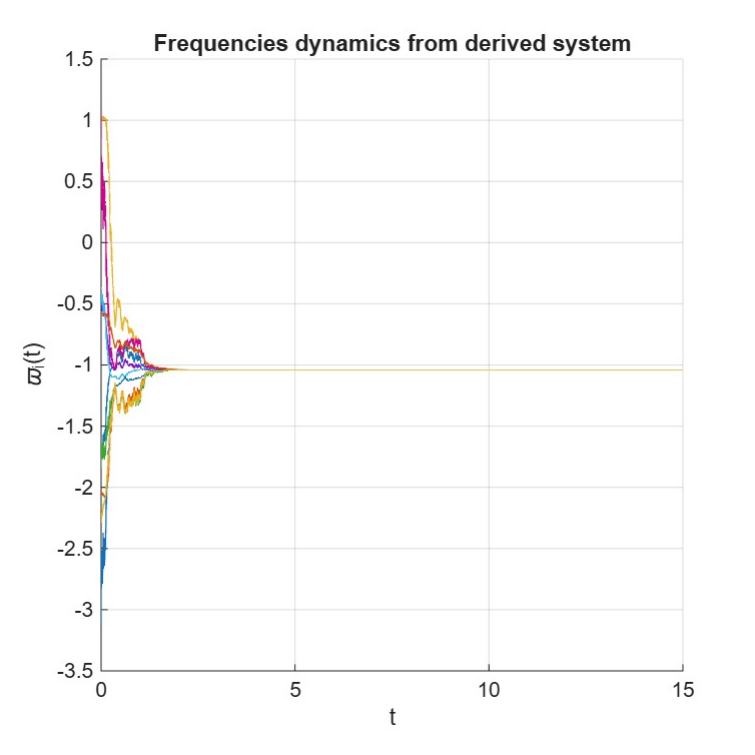}

 \put(20,92.4){\color{white}\rule{4cm}{0.3cm}}
\put(50,1){\color{white}\rule{0.3cm}{0.3cm}}
\put(51,2){\scalebox{0.8}{$t$}}

\put(2,48){\color{white}\rule{0.3cm}{0.4cm}}
\put(-6,50){\scalebox{0.8}{$\varpi_i(t)$}}
\end{overpic}
  \end{minipage}
  \caption{On the left: simulation of the phases of \eqref{RKM} with different frequencies. In the center: simulation of the frequencies obtained as a finite difference of the phases on small time intervals. On the right: simulation of the variables $\varpi_i$ subjected to the system \eqref{RKM_fr}.}
  \label{fig_6}
\end{figure}
This indicates, in particular, that the analysis of the frequencies in \cite{wu2020global} is not valid and that frequency synchronization cannot be established so easily.\jump

\textcolor{black}{
We now compare the optimal theoretical convergence rate, $\mu^\star$, given by the upper bound \eqref{rate_bound} established in Theorem \ref{main_th}, with the effective convergence rate $\mu_\e$ observed in our simulations. Our goal is to investigate how these quantities vary with the parameters $\sigma$ and $K$, thereby assessing both the influence of these parameters and the sharpness of the theoretical estimate $\mu^\star$.
Figures \ref{fig_7}--\ref{fig_8}--\ref{fig_9} each contain three panels. The left panel shows the theoretical rate $\mu^\star$ as a function of either $\sigma$ or $K$. The central panel displays the mean and variance of the empirical rate $\mu_\e$, computed from $100$ independent simulations. The right panel reports the synchronization probability $\mathbb{P}$, evaluated using the same set of simulations. A simulation is declared successful if the system reaches a synchronization error below $10^{-2}$ by the final time $T$.
For Figures \ref{fig_7} and \ref{fig_8}, we set $T=10$ and fix $K=10$. In Figure \ref{fig_9}, we instead fix $\sigma=0.001$ and take $T=50$.} 

\begin{figure}[H]
  \centering
  \begin{minipage}[t]{0.30\textwidth}
    \centering
    \begin{overpic}[width=\linewidth]{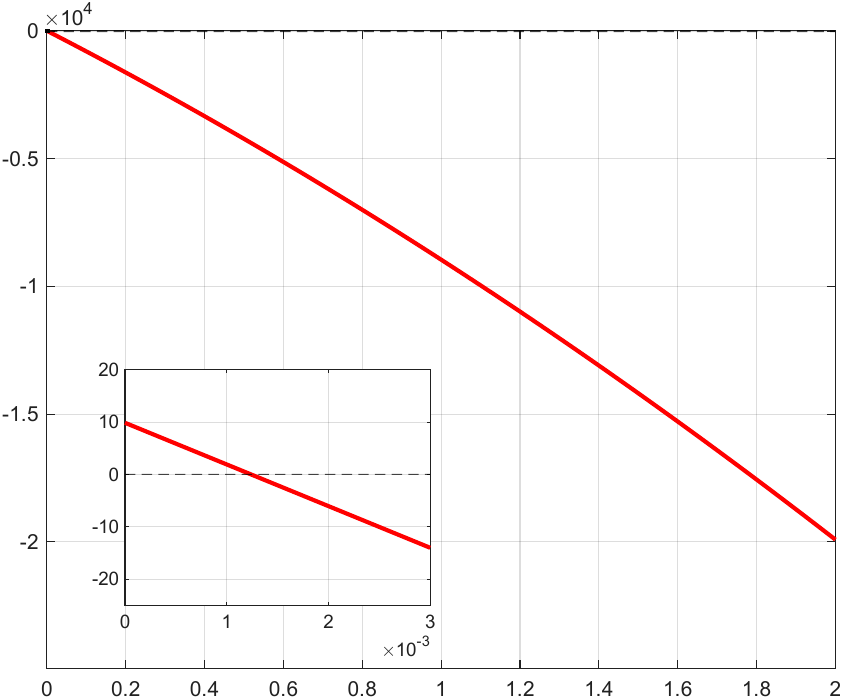}

\put(50,-6){\scalebox{1}{$\sigma$}}

\put(-6,40){\scalebox{1}{$\mu^\star$}}
\end{overpic}
  \end{minipage}\hfill
  \begin{minipage}[t]{0.30\textwidth}
    \centering
    \begin{overpic}[width=\linewidth]{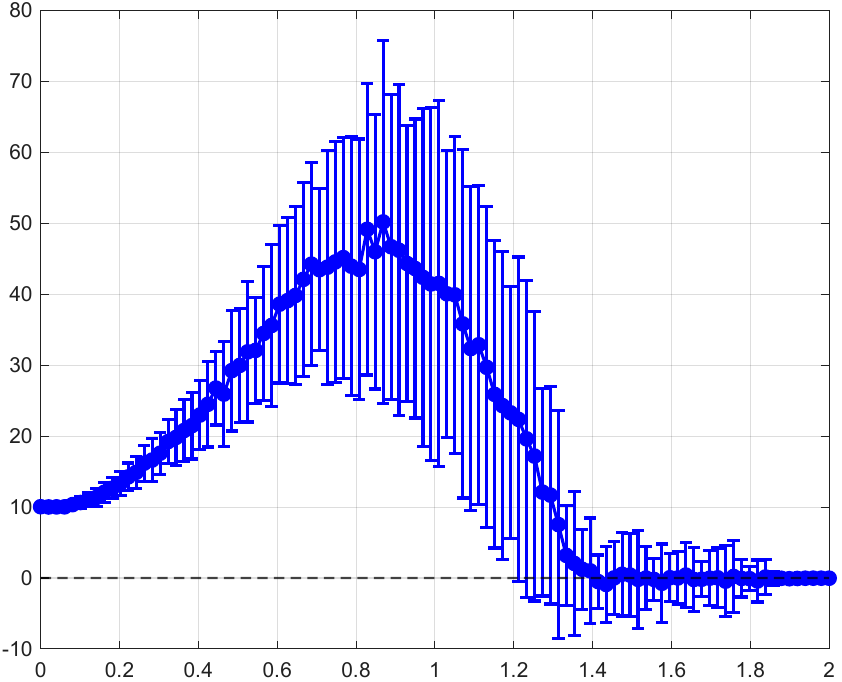}

 \put(49.5,-6){\scalebox{1}{$\sigma$}}

\put(-8,41){\scalebox{1}{$\mu_\e$}}
\end{overpic}
  \end{minipage}\hfill
  \begin{minipage}[t]{0.30\textwidth}
    \centering
    \begin{overpic}[width=\linewidth]{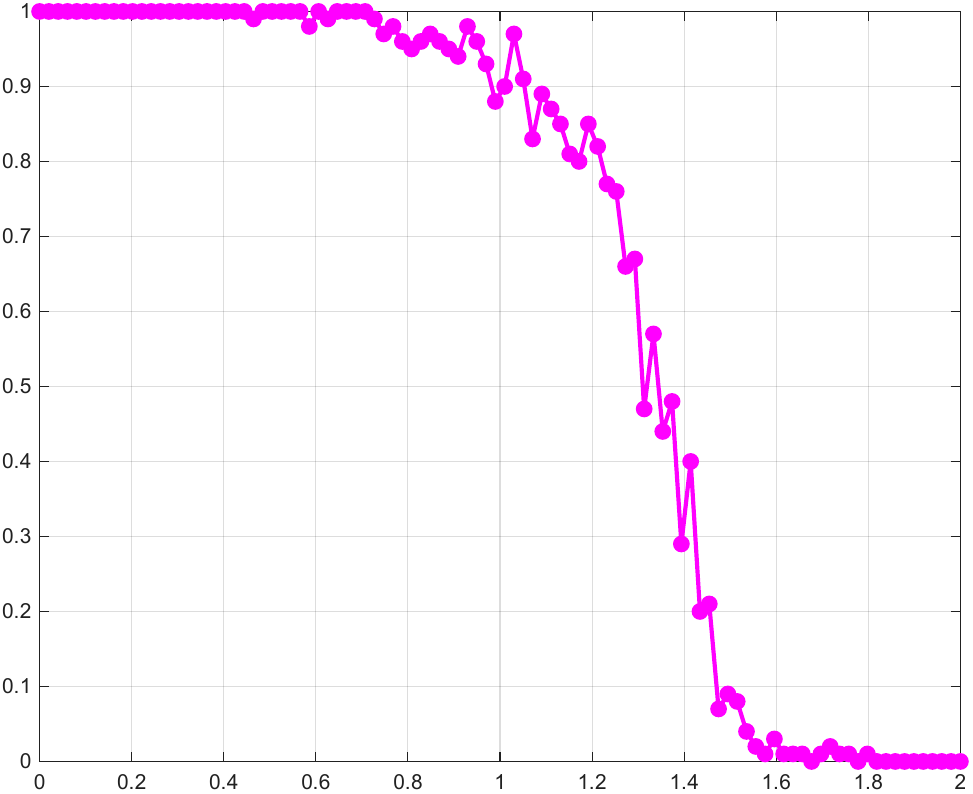}

\put(49,-6){\scalebox{1}{$\sigma$}}

\put(-8,40){\scalebox{1}{$\mathbb{P}$}}
\end{overpic}
  \end{minipage}
  \caption{\textcolor{black}{Plot of $\mu^\star,\mu_\e$ and $\mathbb{P}$ over $\sigma$ of initial data uniformly sampled from $[0,\pi/3]$}}
  \label{fig_7}
\end{figure}
\begin{figure}[H]
  \centering
  \begin{minipage}[t]{0.308\textwidth}
    \centering
    \begin{overpic}[width=\linewidth]{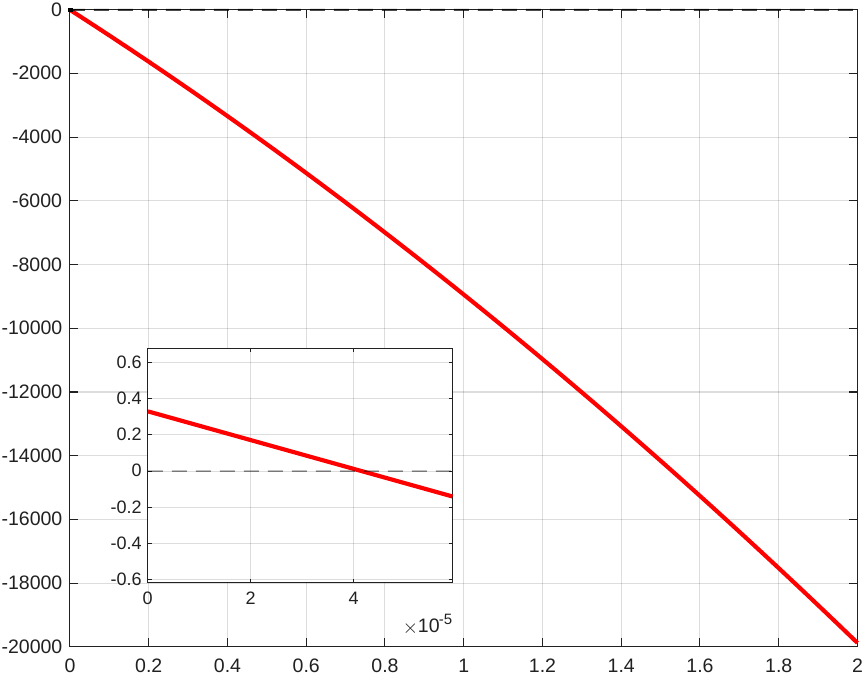}

\put(51.5,-6){\scalebox{1}{$\sigma$}}

\put(-6,40){\scalebox{1}{$\mu^\star$}}
\end{overpic}
  \end{minipage}\hfill
  \begin{minipage}[t]{0.30\textwidth}
    \centering
    \begin{overpic}[width=\linewidth]{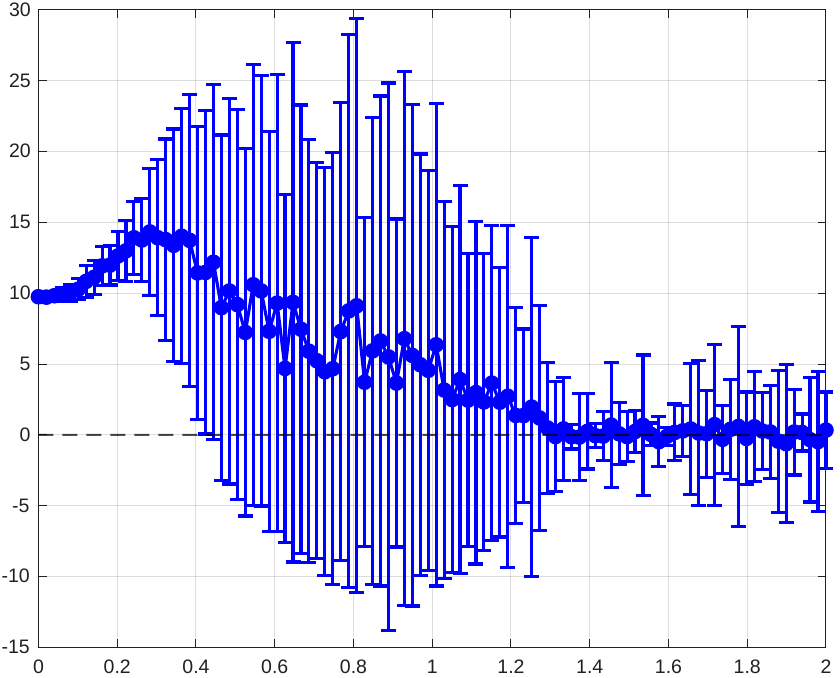}

\put(49.5,-6){\scalebox{1}{$\sigma$}}

\put(-8,40){\scalebox{1}{$\mu_\e$}}
\end{overpic}
  \end{minipage}\hfill
  \begin{minipage}[t]{0.30\textwidth}
    \centering
    \begin{overpic}[width=\linewidth]{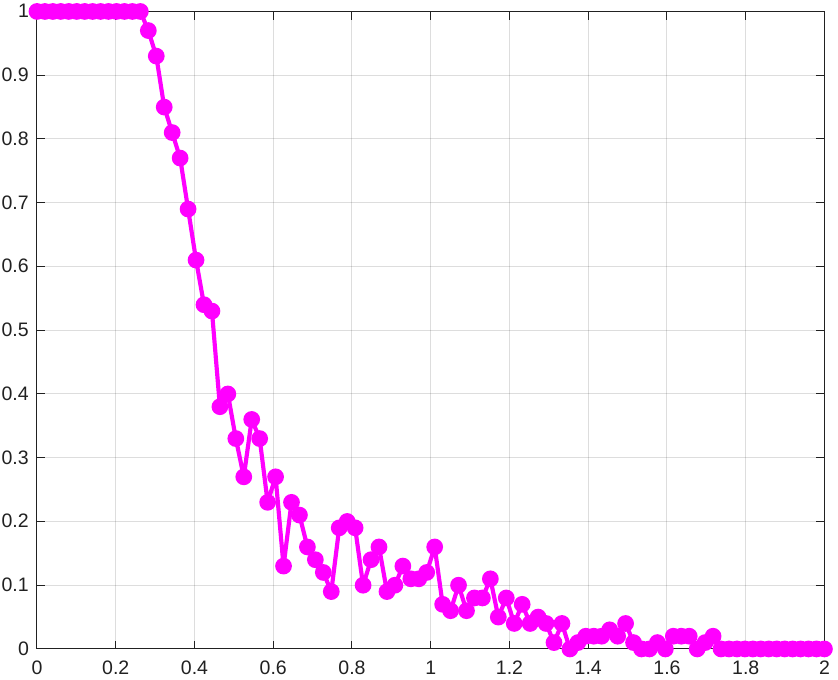}

\put(49.5,-6){\scalebox{1}{$\sigma$}}

\put(-8,40){\scalebox{1}{$\mathbb{P}$}}
\end{overpic}
  \end{minipage}
  \caption{\textcolor{black}{Plot of $\mu^\star,\mu_\e$ and $\mathbb{P}$ over $\sigma$ of initial data uniformly sampled from $[0,\pi]$}}
  \label{fig_8}
\end{figure}
\begin{figure}[H]
  \centering
  \begin{minipage}[t]{0.30\textwidth}
    \centering
    \begin{overpic}[width=\linewidth]{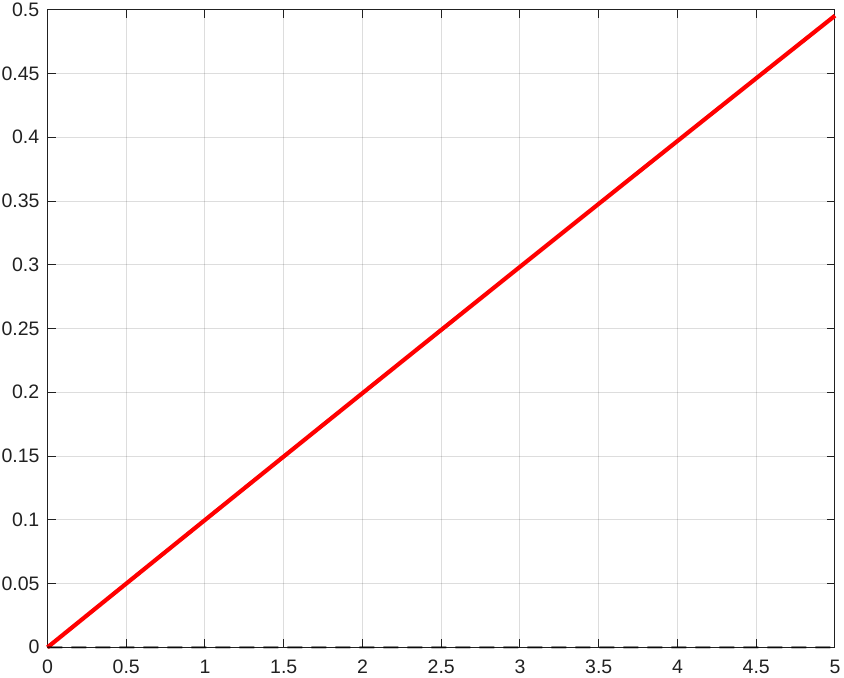}

\put(48.5,-7){\scalebox{1}{$K$}}

\put(-8,40){\scalebox{1}{$\mu^\star$}}
\end{overpic}
  \end{minipage}\hfill
  \begin{minipage}[t]{0.30\textwidth}
    \centering
    \begin{overpic}[width=\linewidth]{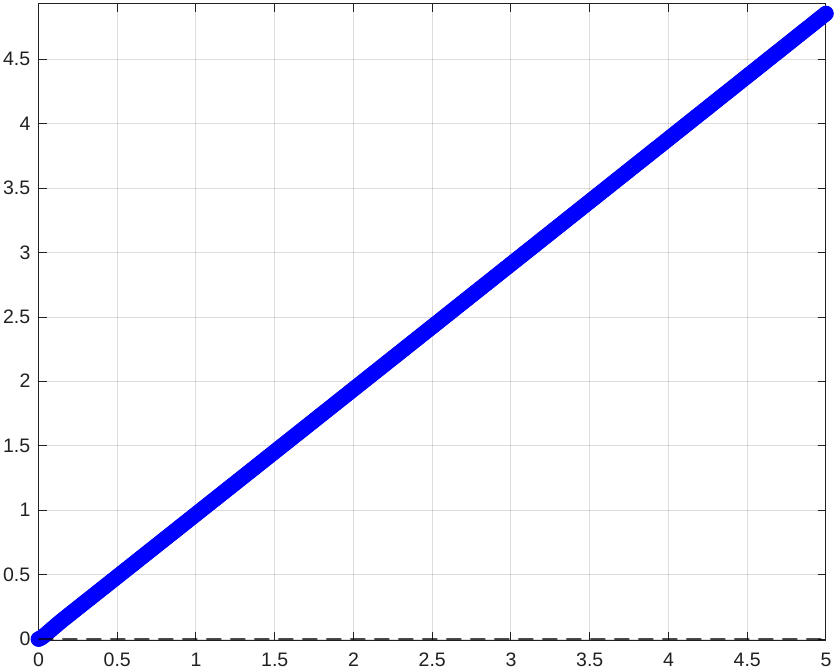}

\put(48,-7){\scalebox{1}{$K$}}

\put(-9,40){\scalebox{1}{$\mu_\e$}}
\end{overpic}
  \end{minipage}\hfill
  \begin{minipage}[t]{0.30\textwidth}
    \centering
    \begin{overpic}[width=\linewidth]{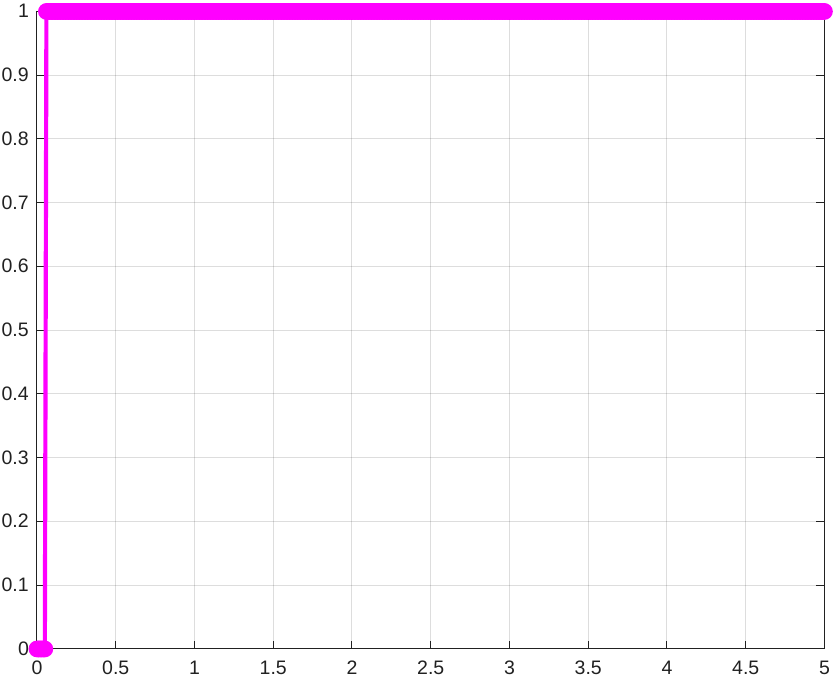}

\put(48,-7){\scalebox{1}{$K$}}

\put(-8,40){\scalebox{1}{$\mathbb{P}$}}
\end{overpic}
  \end{minipage}
  \caption{\textcolor{black}{Plot of $\mu^\star,\mu_\e$ and $\mathbb{P}$ over $K$ of initial data uniformly sampled from $[0,\pi]$}}
  \label{fig_9}
\end{figure}
\textcolor{black}{The numerical experiments indicate that the theoretical convergence rate $\mu^\star$ is not sharp. Indeed, the bound provided by Theorem \ref{main_th} predicts synchronization only for relatively small noise intensities, namely $\sigma$ of order $10^{-5}$--$10^{-3}$, and yields a convergence rate $\mu^\star$ that decreases monotonically as $\sigma$ increases.
In contrast, the simulations reveal a significantly more robust behaviour. The system is able to synchronize with high probability even for noise intensities as large as $\sigma=10^{-1}$, well beyond the theoretical regime covered by our results. Moreover, the empirical convergence rate $\mu_\e$ appears to increase with the noise intensity up to a certain threshold, suggesting that moderate levels of noise may actually enhance the synchronization process. 
A similar discrepancy is observed when varying the coupling strength $K$. While the qualitative dependence of the convergence rate on $K$ is correctly captured by the theory, the value of $\mu^\star$ remains substantially smaller than the empirical rate $\mu_\e$, typically by approximately one order of magnitude.  } 

\textcolor{black}{
These observations reveal a substantial discrepancy between the current theoretical estimates and the behaviour observed in the simulations. Although the present analysis provides a first quantitative framework for studying synchronization in the rough Kuramoto model, the numerical evidence suggests that the theoretical bounds are not yet optimal. Bridging this gap  requires the development of more refined analytical tools and a deeper understanding of the role played by rough noise in the synchronization mechanism. Finally, we mention that the synchronization speed naturally depends on $H$. In particular, numerical simulations yield that for $H<1/2$ the synchronization speed is faster than for $H\geq 1/2$. This fact is not surprising, since rougher noise determines the solution to exit quickly from contracting regions leading to a faster speed of convergence towards equilibrium. On the other hand, smoother noise makes the trajectories spend more time in contracting regions slowing down the speed of convergence. For example for SDEs driven by additive fractional Brownian motion, it is known that \cite[Theorem 1.2]{Hairer} for every initial datum, the corresponding solution converges to a stationary solution in the total variation norm and the speed of convergence is of order $t^{-\delta}$ where $\delta<\max_{\alpha<H} \alpha (1-2\alpha)$ and $H<1/2$. We leave these questions for the rough Kuramoto model for future investigation.}\\

We conclude by observing that our analysis has been carried out in the setting of rough noise, without relying on a specific probabilistic structure. In particular, our framework applies to random rough paths and therefore includes Brownian motion equipped with either its Itô or Stratonovich lift. While our results hold at this level of generality, the case of standard Brownian motion is expected to exhibit a richer interplay between the deterministic drift and the stochastic forcing. Indeed, when working with the Itô lift, one can exploit additional probabilistic tools such as the Itô calculus, martingale methods, and the Markov property, in order to obtain a more precise quantitative understanding of this interaction.
\subsection*{Acknowledgements}
 The authors have been supported by the  Deutsche Forschungsgemeinschaft (DFG, German Research Foundation) - Project ID 543163250.
\end{section}

\bibliographystyle{abbrv}
\bibliography{zReference}
\appendix

\begin{section}{A brief introduction to rough paths}\label{App_A}
Let us briefly present the concept of rough paths in the simplest form, following Friz \& Hairer \cite{friz2014course} and Lyons \cite{lyons1998differential}. For any finite dimensional vector space $X$, denote by $C([a, b], X)$ the space of all continuous paths $y:[a, b] \rightarrow X$ equipped with the sup norm $\|\cdot\|_{\infty,[a, b]}$ given by $\|y\|_{\infty,[a, b]}= \sup _{t \in[a, b]}\left\|y_t\right\|$, where $\|\cdot\|$ is the norm in $X$. We write $y_{s, t}:=y_t-y_s$. For $p \geq 1$, denote by $C^{p-\operatorname{var}}([a, b], X) \subset C([a, b], X)$ the space of all continuous paths $y:[a, b] \rightarrow X$ of finite $p$-variation
$$
\tb y\tb_{p-\operatorname{var},[a, b]}:=\left(\sup _{\Pi([a, b])} \sum_{i=1}^n\left\|y_{t_i, t_{i+1}}\right\|^p\right)^{1 / p}<\infty,
$$
where the supremum is taken over the whole class of finite partitions of $[a, b]$. Also for each $0< \alpha<1$, we denote by $C^\alpha([a, b], X)$ the space of Hölder continuous functions with exponent $\alpha$ on $[a, b]$ equipped with the norm
$$
\|y\|_{\alpha,[a, b]}:=\left\|y_a\right\|+\tb y\tb_{\alpha,[a, b]}, \quad \text { where } \quad\tb y\tb_{\alpha,[a, b]}:=\sup _{s, t \in[a, b], s<t} \frac{\left\|y_{s, t}\right\|}{(t-s)^\alpha}<\infty .
$$
For $\alpha \in\left(\frac{1}{3}, \frac{1}{2}\right)$, a couple $\mathbf{W}=(W, \mathbb{W}) \in \mathbb{R}^m \oplus\left(\mathbb{R}^m \otimes \mathbb{R}^m\right)$, where $W \in C^\alpha\left([a, b], \mathbb{R}^m\right)$ and
$$
\mathbb{W} \in C^{2 \alpha}\left([a, b]^2, \mathbb{R}^m \otimes \mathbb{R}^m\right):=\left\{\mathbb{W} \in C\left([a, b]^2, \mathbb{R}^m \otimes \mathbb{R}^m\right): \sup _{s, t \in[a, b], s<t} \frac{\left\|\mathbb{W}_{s, t}\right\|}{|t-s|^{2 \alpha}}<\infty\right\},
$$
is called a \textit{rough path} if it satisfies Chen's relation
$$
\mathbb{W}_{s, t}-\mathbb{W}_{s, u}-\mathbb{W}_{u, t}=W_{s, u} \otimes W_{u, t}, \quad \forall a \leq s \leq u \leq t \leq b .
$$
We introduce the rough path semi-norm
$$
\tb\mathbf{W}\tb_{\alpha,[a, b]}:=\tb W\tb_{\alpha,[a, b]}+\tb\mathbb{W}\tb_{2 \alpha,[a, b]^2}^{\frac{1}{2}}, \quad \text { where } \quad\tb\mathbb{W}\tb_{2 \alpha,[a, b]^2}:=\sup _{s, t \in[a, b] ; s<t} \frac{\left\|\mathbb{W}_{s, t}\right\|}{|t-s|^{2 \alpha}}<\infty
$$
A common choice is to fix parameters $\frac{1}{3}<\alpha<\frac{1}{2}$ and $p\geq\frac{1}{\alpha}$ so that $C^\alpha([a, b], X) \subset C^{p-\operatorname{var}}([a, b], X)$. We also set $q=\frac{p}{2}$ and consider the $p$-var semi-norm
$$\tb\mathbf{W}\tb_{p-\operatorname{var},[a, b]}:=\left(\tb W\tb_{p-\operatorname{var},[a, b]}^p+\tb\mathbb{W}\tb_{q-\operatorname{var},[a, b]^2}^q\right)^{\frac{1}{p}},$$
$$\tb\mathbb{W}\tb_{q-\operatorname{var},[a, b]^2}:=\left(\sup _{\Pi([a, b])} \sum_{i=1}^n\left\|\mathbb{W}_{t_i, t_{i+1}}\right\|^q\right)^{1 / q},
$$
where the supremum is taken over the whole class of finite partitions $\Pi([a, b])$ of $[a, b]$.
In the rough path setting, denote by $T_1^2\left(\mathbb{R}^m\right)=1 \oplus \mathbb{R}^m \oplus\left(\mathbb{R}^m \otimes \mathbb{R}^m\right)$ the set with the group product
$$
\left(1, g^1, g^2\right) \bullet\left(1, h^1, h^2\right)=\left(1, g^1+h^1, g^1 \otimes h^1+g^2+h^2\right)
$$
for all $\mathbf{g}=\left(1, g^1, g^2\right), \mathbf{h}=\left(1, h^1, h^2\right) \in T_1^2\left(\mathbb{R}^m\right)$. Given $\alpha \in\left(\frac{1}{3}, \frac{1}{2}\right)$, denote by $\mathscr{C}^{0, \alpha}\left(I, T_1^2\left(\mathbb{R}^m\right)\right)$ the closure of $\mathscr{C}^{\infty}\left(I, T_1^2\left(\mathbb{R}^m\right)\right)$ in the Hölder space $\mathscr{C}^\alpha\left(I, T_1^2\left(\mathbb{R}^m\right)\right)$, and by $\mathscr{C}_0^{0, \alpha}\left(\mathbb{R}, T_1^2\left(\mathbb{R}^m\right)\right)$ the space of all paths $\left.\mathbf{g}: \mathbb{R} \rightarrow T_1^2\left(\mathbb{R}^m\right)\right)$ such that $\left.\mathbf{g}\right|_I \in \mathscr{C}^{0, \alpha}\left(I, T_1^2\left(\mathbb{R}^m\right)\right)$ for each compact interval $I \subset \mathbb{R}$ containing 0. Assign $\Omega:=\mathscr{C}_0^{0, \alpha}\left(\mathbb{R}, T_1^2\left(\mathbb{R}^m\right)\right)$ and equip it with the Borel $\sigma$-algebra $\mathcal{F}$. Denote by $\Xi$ the \textit{Wiener-type shift}
$$
\left(\Xi_t \omega\right)_\cdot=\omega_t^{-1} \bullet \omega_{t+\cdot}, \qquad\forall t \in \mathbb{R}, \omega \in \mathscr{C}_0^{0, \alpha}\left(\mathbb{R}, T_1^2\left(\mathbb{R}^m\right)\right)
$$
and define the so-called \textit{diagonal process} $\mathbf{X}: \mathbb{R} \times \Omega \rightarrow T_1^2\left(\mathbb{R}^m\right), \mathbf{X}_t(\omega)=\omega_t$ for all $t \in \mathbb{R}, \omega \in \Omega$. Under assumption $\left(\mathbf{H}_W\right)$, it can be proved that there exists a probability measure $\mathbb{P}$ which is $\theta$ invariant \cite[Theorem 5]{bailleul2017random}. Thus $(\Omega, \mathcal{F}, \mathbb{P})$ is a probability space equipped with the continuous (thus measurable) metric dynamical system $\theta: \mathbb{R} \times \Omega \rightarrow \Omega$. In particular, the Wiener shift (5.5) implies that
$$
\left\tb\mathbf{W}\left(\Xi_h \omega\right)\right\tb_{p-\operatorname{var},[s, t]}=\tb\mathbf{W}(\omega)\tb_{p-\operatorname{var},[s+h, t+h]}
$$
Moreover, $\Xi$ is ergodic if $W=B^H$ is a fractional Brownian motion, see e.g.~\cite[Theorem 20]{GS}.
\end{section}

\begin{section}{Controlled Rough Paths}\label{App_gub}
The theory of controlled rough paths gives a meaning to (6) even if the paths of the driving process are
not smooth, but only $\alpha$-Hölder regular for $\alpha\in\left(1/3,1/2\right]$
\begin{definition}
    A path $Y \in C^{\alpha}([0,1] ; X)$ is controlled by $W \in C^{\alpha}([0,1] ; \R^m)$ if there exists $Y^{\prime} \in C^{\alpha}([0,1] ; \mathcal{L}(\R^m, X))$ such that
\begin{equation}
Y_{t}=Y_{s}+Y_{s}^{\prime} W_{s, t}+R_{s, t}^{Y} 
\end{equation}
where the remainder $R^{Y}$ has $2 \alpha$-Hölder regularity. The space of controlled rough paths $\left(Y, Y^{\prime}\right)$ on the time-interval $[0,1]$ is denoted by $D_{W}^{2 \alpha}([0,1] ; X)=: D_{W}^{2 \alpha}$. This space is endowed with the semi-norm
\begin{equation}
\tb Y, Y^{\prime}\tb_{D_{W}^{2 \alpha}}\coloneqq\tb Y^{\prime}\tb_{\alpha}+\tb R^{Y}\tb_{2 \alpha} 
\end{equation}
\end{definition}
A norm on $D_{W}^{2 \alpha}$ is given by $\left|Y_{0}\right|+\left|Y_{0}^{\prime}\right|+\tb Y^{\prime}\tb_{\alpha}+\tb R^{Y}\tb_{2 \alpha}$ or $\|Y\|_{\infty}+\left\|Y^{\prime}\right\|_{\infty}+\tb Y^{\prime}\tb_{\alpha}+\tb R^{Y}\tb_{2 \alpha}$, where both norms are equivalent, see \cite[Rem. 4.10]{brault2019solving}. The norm on $D_{W}^{2 \alpha}([0,1] ; X)$ is denoted by $\tb\cdot\tb_{\mathbf{D}_{W}^{2 \alpha}([0,1] ; X)}$ (short $\tb\cdot\tb_{\mathbf{D}_{W}^{2 \alpha}}$ ). Obviously, if $Y_{0}=0$ and $Y_{0}^{\prime}=0$, then the previous one is a norm on $D_{W}^{2 \alpha}$. The term $Y^{\prime}$ is referred to as the Gubinelli derivative of $Y$, see \cite{gubinelli2004controlling} and \cite[Ch. 4]{friz2014course}. For smooth paths $Y$ and $W$, the choice of $Y^{\prime}$ is not unique. However, one can show that for rough inputs $W, Y^{\prime}$ is uniquely determined by $Y$, see \cite[Rem. 4.7 and Sec. 6.2]{friz2014course}.
\begin{theorem}\hspace{-0.15cm}{(\cite[Prop. 1]{gubinelli2004controlling})}\label{gub_int}
Let $\left(Y, Y^{\prime}\right) \in D_{W}^{2 \alpha}\left([0,1] ; \mathbb{R}^{N \times m}\right)$ and $\boldsymbol{W}=(W, \mathbb{W})$ be an $\mathbb{R}^{m}$-valued $\alpha$-Hölder rough path for some $\alpha \in\left(\frac{1}{3}, \frac{1}{2}\right]$. Furthermore, $\mathcal{P}$ stands for a partition of $[0,1]$. Then, the integral of $Y$ against $\boldsymbol{W}$ defined as
\begin{equation}
\int_{s}^{t} Y_{r} \mathrm{~d} \boldsymbol{W}_{r}:=\lim _{|\mathcal{P}| \rightarrow 0} \sum_{[u, v] \in \mathcal{P}}\left(Y_{u} W_{u, v}+Y_{u}^{\prime} \mathbb{W}_{u, v}\right) 
\end{equation}
exists for every pair $s, t \in[0,1]$. Moreover, the estimate
\begin{equation}
\left|\int_{s}^{t} Y_{r} \mathrm{~d} \boldsymbol{W}_{r}-Y_{s} W_{s, t}-Y_{s}^{\prime} \mathbb{W}_{s, t}\right| \leq C\left(\tb W\tb_{\alpha}\tb R^{Y}\tb_{2 \alpha}+\tb\mathbb{W}\tb_{2 \alpha}\tb Y^{\prime}\tb_{\alpha}\right)|t-s|^{3 \alpha}
\end{equation}
holds true for all $s, t \in[0,1]$. The map from $\mathcal{D}_{W}^{2 \alpha}\left([0,1] ; \mathbb{R}^{N \times m}\right)$ to $D_{W}^{2 \alpha}\left([0,1] ; \mathbb{R}^{N}\right)$ given by
$$
\left(Y, Y^{\prime}\right) \mapsto\left(P, P^{\prime}\right):=\left(\int_{0}^{\cdot} Y_{r} \mathrm{~d} \boldsymbol{W}_{r}, Y .\right)
$$
is linear and continuous. Furthermore, the estimate
\begin{equation}
\tb P, P^{\prime}\tb_{D_{W}^{2 \alpha}} \leq\tb Y\tb_{\alpha}+\left\|Y^{\prime}\right\|_{\infty}\tb\mathbb{W}\tb_{2 \alpha}+C\left(\tb W\tb_{\alpha}\tb R^{Y}\tb_{2 \alpha}+\tb\mathbb{W}\tb_{2 \alpha}\tb Y^{\prime}\tb_{\alpha}\right)
\end{equation}
is valid.
\end{theorem}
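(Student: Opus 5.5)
The plan is to realise the integral as the output of the sewing lemma applied to the second-order germ
$$\Xi_{u,v}\coloneqq Y_uW_{u,v}+Y'_u\mathbb{W}_{u,v},\qquad 0\le u\le v\le 1 .$$
Once $\Xi$ is shown to be "almost additive" with defect of order $|t-s|^{3\alpha}$, everything else follows: convergence of the Riemann sums, independence of the partition, the local estimate and the controlled structure of $P$. Since $3\alpha>1$ for $\alpha>\frac13$, this is exactly the regime where sewing works.

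\emph{Step 1 (algebraic identity).} For $s\le u\le t$ I will compute $\delta\Xi_{s,u,t}\coloneqq\Xi_{s,t}-\Xi_{s,u}-\Xi_{u,t}$.
\begin{itemize}
\item Using $W_{s,t}=W_{s,u}+W_{u,t}$ and Chen's relation $\mathbb{W}_{s,t}=\mathbb{W}_{s,u}+\mathbb{W}_{u,t}+W_{s,u}\otimes W_{u,t}$, the terms $Y_sW_{s,u}$ and $Y'_s\mathbb{W}_{s,u}$ cancel.
\item Inserting the controlled expansion $Y_{s,u}=Y'_sW_{s,u}+R^Y_{s,u}$ then gives
$$\delta\Xi_{s,u,t}=-R^Y_{s,u}W_{u,t}-Y'_{s,u}\mathbb{W}_{u,t}.$$
\end{itemize}
Hence
$$|\delta\Xi_{s,u,t}|\le\big(\tb W\tb_{\alpha}\tb R^Y\tb_{2\alpha}+\tb\mathbb{W}\tb_{2\alpha}\tb Y'\tb_{\alpha}\big)|t-s|^{3\alpha}\eqqcolon M|t-s|^{3\alpha}.$$

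\emph{Step 2 (sewing).} This is the main analytic step and the only real obstacle. I will prove it by Young's point-removal argument.
\begin{itemize}
\item Take a partition $\mathcal{P}$ of $[s,t]$ with $n$ intervals. Some interior point $u$ has neighbours $u_-<u<u_+$ with $u_+-u_-\le\frac{2}{n-1}|t-s|$.
\item Removing $u$ changes $\sum_{\mathcal{P}}\Xi$ by exactly $\delta\Xi_{u_-,u,u_+}$, which is at most $M\big(\tfrac{2|t-s|}{n-1}\big)^{3\alpha}$.
\item Iterating down to the trivial partition gives
$$\Big|\sum_{\mathcal{P}}\Xi-\Xi_{s,t}\Big|\le 2^{3\alpha}\zeta(3\alpha)\,M|t-s|^{3\alpha},$$
uniformly in $\mathcal{P}$.
\end{itemize}
To show the sums are Cauchy as $|\mathcal{P}|\to0$, compare two partitions through their common refinement. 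Applying the uniform bound inside each cell $[u,v]$ of the coarser partition gives a total error $\lesssim M\sum|v-u|^{3\alpha}\le M|\mathcal{P}|^{3\alpha-1}\to0$. This yields existence of the limit, independence of the choice of partitions, additivity of $\int_s^t$, and the local estimate with $C=2^{3\alpha}\zeta(3\alpha)$.

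\emph{Step 3 (controlled structure and continuity).} Set $P=\int_0^\cdot Y\,\d\mathbf{W}$ and $P'=Y$.
\begin{itemize}
\item By the local estimate, $P_{s,t}=Y_sW_{s,t}+R^P_{s,t}$ with $R^P_{s,t}=Y'_s\mathbb{W}_{s,t}+O\big(M|t-s|^{3\alpha}\big)$.
\item Using $|t-s|\le1$ on $[0,1]$, this gives
$$\tb R^P\tb_{2\alpha}\le\|Y'\|_\infty\tb\mathbb{W}\tb_{2\alpha}+CM.$$
\item Trivially $\tb P'\tb_\alpha=\tb Y\tb_\alpha$, and $Y$ is $\alpha$-Hölder because $\tb Y\tb_\alpha\le\|Y'\|_\infty\tb W\tb_\alpha+\tb R^Y\tb_{2\alpha}$.
\end{itemize}
Adding these bounds gives the stated estimate for $\tb P,P'\tb_{D^{2\alpha}_W}$. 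The map is linear because $\Xi$ depends linearly on $(Y,Y')$ and limits are linear. Continuity then follows from the bound applied to differences $(Y-\tilde Y,Y'-\tilde Y')$, since the remainder of the difference is $R^Y-R^{\tilde Y}$, together with control of the initial values $|P_0|=0$ and $|P'_0|=|Y_0|$ in the full norm.
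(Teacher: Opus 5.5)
Your proposal is correct and is essentially the argument behind this result. The paper gives no proof of its own: it cites Gubinelli's Prop.~1 (the sewing/$\Lambda$-map construction, which the paper elsewhere invokes as "the sewing lemma"). That proof proceeds exactly as you do, sewing the germ $Y_uW_{u,v}+Y'_u\mathbb{W}_{u,v}$ via the Chen-relation identity $\delta\Xi_{s,u,t}=-R^Y_{s,u}W_{u,t}-Y'_{s,u}\mathbb{W}_{u,t}$. Your Young point-removal bound, the common-refinement Cauchy argument, and the read-off of $\tb R^P\tb_{2\alpha}\le\|Y'\|_\infty\tb\mathbb{W}\tb_{2\alpha}+CM$ and $\tb P'\tb_\alpha=\tb Y\tb_\alpha$ reproduce the stated estimates.
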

We would like to end this Appendix stating few estimates that hold true for a general RDE like \eqref{RDE}. In particular, the solution of the RDE \eqref{RDE} is given by $(y,y')=(y,g(y))$ and satisfies the following estimates (see \cite[Theorem 8.3]{friz2014course} for a similar statement for the H\"older norm instead of the $p$-variation one)
\begin{equation}\label{est_g}
    \tb(g(y))'\tb_{p-\text{var},[s,t]}\leq 2C_g^2\tb y\tb_{p-\text{var},[s,t]}
\end{equation}
\begin{equation}\label{est_remainder}
    \tb R^{g(y)}\tb_{q-\text{var},[s,t]^2}\leq C_g\tb R^y\tb_{q-\text{var},[s,t]^2}+\frac{1}{2}C^2_g\tb x\tb_{p-\text{var},[s,t]}\tb y\tb_{p-\text{var},[s,t]}.
\end{equation}
         In terms of the Greedy times, recall~\cite[Section 4]{CLL} one can show that 
            \begin{align}\label{sol_bound}
        \begin{split}
            \tb y,R^y\tb_{p-\text{var},[s,t]}\leq&\left[\|y_s\|+\left(\frac{\|f(0)\|}{C_f}+\frac{1}{C_p}\textcolor{black}{\overline{\nN}}\left(\mathfrak{C},\textbf{x},[s,t]\right)\right)\right]\times\\
            &\hspace{4cm}\times e^{4C_f(t-s)}\textcolor{black}{\overline{\nN}}\left(\mathfrak{C},\textbf{x},[s,t]\right)^{\frac{p-1}{p}}-\|y_s\|
       \end{split}
       \end{align}
        where the left hand side is defined as
        $$\tb y,R^y\tb_{p-\text{var},[s,t]}\coloneqq\tb y\tb_{p-\text{var},[s,t]}+\tb R^y\tb_{q-\text{var},[s,t]^2}$$
        and $\mathfrak{C}$ a constant depending on the construction of the greedy times. Here we have that $\mathfrak{C}=\frac{1}{16C_{p}}$.
\end{section}

\begin{section}{Complementary proofs}\label{App_B}
\begin{proof}{(Lemma \ref{lemma1})}
    We can alternatively prove the statement by showing that 
    $$\frac{\textnormal{d}}{\textnormal{d}t}\sum_{i=1}^N z_i(t)=0$$
    since by definition have that
    $$\sum_{i=1}^N z_i(\tau_k)=\sum_{i=1}^N y_i(\tau_k)=0.$$
    In the following we will denote $\underline{1}=\textcolor{black}{\{1\}^N}$ \textcolor{black}{the $N$-dimensional vector whose entries are all equal to }$\textcolor{black}{1}$, \textcolor{black}{so that we want to prove that} $\dot{\underline{1}z(t)}=0$. One can prove that the Jacobian matrix $\frac{\partial \phi}{\partial z}$ satisfies
    $$\begin{cases}
    \displaystyle
        \d\frac{\partial \phi_{t,\tau_k}(\mathbf{W},z)}{\partial z}=\frac{\partial \tilde{G}}{\partial z}\left(\phi_{t,\tau_k}(\mathbf{W},z)\right)\frac{\partial \phi_{t,\tau_k}(\mathbf{W},z)}{\partial z}\d \textbf{W}_t,\\
           \displaystyle
        \frac{\partial \phi_{\tau_k,\tau_k}(\mathbf{W},z)}{\partial z}=\textnormal{Id}.
    \end{cases}$$
    This implies in particular that 
    \begin{align*}
        \d\left(\underline{1}\frac{\partial \phi_{t,\tau_k}(\mathbf{W},z)}{\partial z}\right)=\underline{1}\frac{\partial \Tilde{G}}{\partial z}&\left(\phi_{t,\tau_k}(\mathbf{W},z)\right)\frac{\partial \phi_{t,\tau_k}(\mathbf{W},z)}{\partial z}\d \textbf{W}_t=\\&=\sum_{j=1}^m\left(\begin{array}{c}
       \partial_{z_1}\sum_{i=1}^N \Tilde{G}_{ij} \\
       \vdots \\
       \partial_{z_N}\sum_{i=1}^N \Tilde{G}_{ij}
        \end{array}\right)\left(\phi_{t,\tau_k}(\mathbf{W},z)\right)\frac{\partial \phi_{t,\tau_k}(\mathbf{W},z)}{\partial z}\d \textbf{W}_t^j=0
    \end{align*}
    where we have used again that $$\sum_{i=1}^N \Tilde{G}_{ij}=0$$ for every $j$. So we obtain that
    $$\begin{cases}
    \displaystyle
        \d\left(\underline{1}\frac{\partial \phi_{t,\tau_k}(\mathbf{W},z)}{\partial z}\right)=0,\\
           \displaystyle
        \underline{1}\frac{\partial \phi_{\tau_k,\tau_k}(\mathbf{W},z)}{\partial z}=\underline{1}.
    \end{cases}$$
which obviously implies that
$$\underline{1}\left[\frac{\partial \phi_{t,\tau_k}(\mathbf{W},z)}{\partial z}\right]^{-1}
=\underline{1}\qquad\forall t\in[\tau_k,\tau_{k+1}].$$
From this last inequality we obtain the statement, namely
$$\underline{1}\dot{z(t)}=\underline{1}\left[\frac{\partial \phi_{t,\tau_k}(\mathbf{W},z)}{\partial z}\right]^{-1}f(\phi_{t,\tau_k}(\mathbf{W},z_t))=\underline{1}f(\phi_{t,\tau_k}(\mathbf{W},z_t))=0$$
where in the last equality we have used that we are taking $f$ as the vector field that defines the Kuramoto model.
\end{proof}
\end{section}
\end{document}